\documentclass[12pt,reqno]{amsart}

\usepackage{amsmath,amssymb,amsthm}
\usepackage{bm,bbm}
\usepackage{mathrsfs}
\usepackage[all]{xy}
\usepackage{booktabs}
\usepackage{fullpage}
\usepackage[pagebackref=true]{hyperref}
\usepackage{mathtools}
\usepackage[abbrev,msc-links]{amsrefs}
\usepackage{caption,subcaption}
\usepackage[]{todonotes}
\usepackage{tikz}
\usetikzlibrary{decorations.markings}
\usepackage{pgfplots}
\hypersetup{
 hidelinks,
 bookmarksopen=true,
}

\newcommand{\rl}{\mathbb{R}}

\theoremstyle{plain}
\newtheorem{theorem}{Theorem}[section]

\newtheorem{lemma}[theorem]{Lemma}
\newtheorem{proposition}[theorem]{Proposition}
\newtheorem{corollary}[theorem]{Corollary}
\theoremstyle{definition}
\newtheorem{definition}[theorem]{Definition}

\newtheorem{example}[theorem]{Example}
\theoremstyle{definition}
\newtheorem{remark}[theorem]{Remark}
\newtheorem{remarks}[theorem]{Remarks}

\begin{document}

\title{Magnitude of metric measure spaces and integrals over geodesics}
\author{Yoshinori Hashimoto}
\date{\today}

\begin{abstract}
We propose a definition of magnitude for a length space with a Borel measure, which involves integrals over the set of geodesics. This quantity agrees with the magnitude of finite metric spaces, up to re-scaling the metric to ensure the convergence, when we use the counting measure on them. We also prove a version of the homogeneous magnitude theorem, by showing that the new definition agrees with the volume when we use the weight measure on a compact homogeneous Riemannian manifold. We compute various examples, which suggest that this quantity can capture information of non-uniqueness of geodesics, such as the injectivity radius, corresponding to the generating degrees of the magnitude homology.
\end{abstract}

\maketitle

\tableofcontents

\section{Introduction}

\subsection{Overview of the results}

The magnitude is an invariant of metric spaces introduced by Leinster \cite{Lein13}, extensively studied recently, which is primarily defined for finite metric spaces. There are mainly two approaches (see e.g.~\cite[\S 3]{Lein13} or \cite[\S 2]{Mec13}) to extend this definition to infinite metric spaces: one is to take the supremum of magnitude over all its finite subspaces (or the Gromov--Hausdorff limit of them \cite{Lein13,LeiWil,Willerton09}), the other is to compute the volume with respect to the weight measure as done by Willerton \cite{Willerton}. Meckes \cite[Theorems 2.3 and 2.4]{Mec13} proved that these definitions agree for positive definite metric spaces, such as the ones listed in \cite[Theorem 3.6]{Mec13}, assuming that a weight measure exists.

The aim of this paper is to try to give a unified framework for these two approaches, by proposing a new definition of magnitude for metric measure spaces. Let $(X, \mathsf{d})$ be a length space and $\mu$ be a Borel measure. Integrating over the set $\Omega_{\bm{x}}$ of $n$ geodesic segments connecting $\bm{x} = (x_0 , \dots , x_n) \in X^{n+1}$ with respect to a measure $\Gamma_{\bm{x}}$, we define the magnitude as
\begin{equation*}
		\mathrm{Mag} (X,\mathsf{d},\mu, \Gamma) := \mu (X) + \sum_{n=1}^{\infty} (-1)^n \int_{\bm{x} \in X^{n+1}}  \left(  \int_{\gamma \in \Omega_{\bm{x}}} e^{-\mathrm{Length} (\gamma) }  \mathrm{d} \Gamma_{\bm{x}} \right) \mathrm{d} \mu^{n+1} ,
\end{equation*}
where $\mu^{n+1}$ is the product measure on $X^{n+1}$, assuming that the limit exists (see Definition \ref{dfnptmmgv} for more details). It takes a simplified form, which is valid for a compact Riemannian manifold $(X,g)$ and a probability measure $\mu$ on $X$ that is absolutely continuous with respect to the Lebesgue measure (see Definition \ref{dfnptmg} and Theorem \ref{thgmsnmnpc}), as
\begin{equation*}
	\mathrm{Mag}(X, \mathsf{d}, \mu ) =\mu (X) +\sum_{n=1}^{\infty} (-1)^n \int_{(x_0, \dots , x_n) \in X^{n+1}} e^{-\sum_{k=1}^n \mathsf{d}(x_{k}, x_{k+1})} \mathrm{d} \mu^{n+1},
\end{equation*}
where $\mathsf{d}$ is the length metric associated to $g$.

Its key properties, and the summary of the results in this paper, are the following.
\begin{enumerate}
	\item When $(X , \mathsf{d})$ is a finite metric space and $\mu_{\sharp}$ is the counting measure, $\mathrm{Mag}(X, \mathsf{d}, \mu_{\sharp} )$ agrees with the original definition of magnitude by re-scaling $\mathsf{d}$ to ensure the convergence if necessary (Theorem \ref{lmcvfmsm}). In this formalism, the magnitude of a finite subspace $Z \subset X$ corresponds to choosing $\mu$ to be the counting measure supported on $Z$ (Lemma \ref{lmfsbepm}).
	\item If a weight measure $\mu_{\mathrm{w}}$ exists for a compact Riemannian manifold $(X , g)$ and if it is absolutely continuous with respect to the Lebesgue measure, $\mathrm{Mag}(X, \mathsf{d}, \mu_{\mathrm{w}} )$ is given by the volume $\mu_{\mathrm{w}} (X)$, up to taking a subsequence (Theorem \ref{thmgwgtms}). In particular, $\mathrm{Mag}(X, \mathsf{d}, \mu_{\mathrm{w}} )$ satisfies the homogeneous magnitude theorem as in \cite{Willerton} when $(X,g)$ is a compact homogeneous Riemannian manifold (Corollary \ref{crmgwgtms}).
	\item When $(X,g)$ is a Riemannian manifold and $\mathsf{d}$ is its length metric, it is natural to take $\mu$ to be the volume form $\mathrm{dvol}_g$ of $g$. Examples in \S \ref{scexmfduqg} and \S \ref{scexmfdcl} indicate that $\mathrm{Mag}(X, \mathsf{d}, \mathrm{dvol}_g )$ contains information of the injectivity radius and the diameter of $(X,g)$, which seems to correspond to the non-trivial generators of the magnitude homology group, or the jump in the number of geodesics connecting two points.
	\item In the form that involves integrals over the set of geodesics, we can define new invariants $\mathrm{Mag}(X, \mathsf{d}, \mu , \Gamma )$ by choosing various measures for $\Gamma$ on the set of geodesics (Definition \ref{dfnptmg}). These invariants give different values when geodesics are not unique, as we see for finite graphs (especially 4-cuts) in \S \ref{scitfms}.
\end{enumerate}

Concerning the classical approach of taking the supremum of magnitude over all finite subspaces, the first point above gives a perspective of formulating this problem as approximating a given measure by a counting measure. We can consider a related problem by using an empirical measure (the average of delta measures supported on finite points). Berman, Boucksom, and Witt Nystr\"om \cite{BB10,BBW} proved that the empirical measure on the Fekete configuration converges to the equilibrium measure on a compact K\"ahler manifold. We can prove, by using this result, that the magnitude $\mathrm{Mag}(X, \mathsf{d}, \mu )$ computed for an equilibrium measure $\mu$ is a limit of an appropriately scaled version of magnitude of the Fekete configurations (Theorem \ref{thmfkeqm}). For $\mathbb{CP}^1$, we can also establish a relationship between the classical magnitude and the limit of the magnitude of Fekete configurations (Corollary \ref{thmclmfkc1}).

The second point above shows that $\mathrm{Mag}(X, \mathsf{d}, \mu_{\mathrm{w}} )$ agrees with the classical magnitude for spheres, which is positive definite, when we use the weight measure $\mu_{\mathrm{w}}$ (see also Corollary \ref{clmwsmmz}). We also give an example (see \S \ref{sccbism}) for which the new definition $\mathrm{Mag}(X, \mathsf{d}, \mu )$ does not agree with the classical one when there is a singular component in the weight measure.

The third point above indicates that the invariant $\mathrm{Mag}(X, \mathsf{d}, \mu )$ may be related to the de-categorification of the magnitude homology, at least for the examples covered in this paper, which include $\mathbb{R}$, $S^1$, $S^2$, and a 2-torus (see \S \ref{scexmfduqg} and \S \ref{scexmfdcl}). Recall that the magnitude homology measures the non-uniqueness of geodesics, as proved by Asao \cite{Asao} and Gomi \cite{Gomi}. The examples in \S \ref{scexmfdcl} show that the new definition of magnitude function can detect the injectivity radius or the diameter, which is deeply connected to the non-uniqueness of geodesics, as the rate of exponential decay. These are the length parameters for which the magnitude homology is non-trivial, as computed by Gomi \cite[Theorem 1.3]{Gomi}. When the manifold is uniquely geodesic, such as $(\mathbb{R}, | \cdot |)$ treated in \S \ref{scexmfduqg}, there seem to be no terms of such decay (i.e.~no terms of order $e^{-ct}$ for some constant $c>0$). On the other hand, the example in \S \ref{sccbilm} shows that this naive picture overlooks the contributions from the boundary.

The fourth point above means the variety of definitions that we have, by choosing various measures $\mu$ and $\Gamma$.

\subsection{Organisation of the paper}

We review the classical definition of magnitude for finite metric spaces in \S \ref{scrrirffms}, with a path-integral like interpretation to motivate the main part of the text. The definitions of the invariants $\mathrm{Mag} (X,\mathsf{d},\mu, \Gamma)$, $\mathrm{Mag} (X,\mathsf{d},\mu)$ are given in \S \ref{scpidmmms}, together with some sufficient conditions to ensure their convergence, and also a relationship to the Fekete configurations. In \S \ref{scrm}, we prove some preliminary results that apply for Riemannian manifolds with weighted volume forms, before computing various examples in \S \ref{scexmfduqg} and \S \ref{scexmfdcl}. Finally, we treat the case when the metric space admits a positive weight measure in \S \ref{scmswms}, and prove various results including the homogeneous magnitude theorem for compact homogeneous Riemannian manifolds.

\subsection{Notational conventions and terminologies}
\begin{itemize}
	\item Throughout this paper, a metric space $(X , \mathsf{d})$ is either a finite metric space or a length space. For a Borel measure $\mu$ on $(X , \mathsf{d})$, we write $\mu^n$ for the product measure on $X^n$.
	\item When $X$ is finite, $\mu_{\sharp}$ denotes the counting measure, with the defining property $\mu_{\sharp} (A) = |A|$ for any $A \subset X$, where we write $|A|$ for the cardinality of $A$.
	\item For a Riemannian manifold $(X,g)$, we write $\mathsf{d}_g$ for the length metric with respect $g$, $\mathrm{dvol}_g$ for the volume form, and $\mathrm{Vol}_g(X):= \int_X \mathrm{dvol}_g$ for the total volume.
	\item For a square matrix $Y$, we write $\Vert Y \Vert_{\mathrm{op}}$ for the operator norm.
\end{itemize}

\medskip

\noindent \textbf{Acknowledgements.} The author thanks Yasuhiko Asao for many helpful comments and for patiently answering questions during discussions, and Takahiro Aoi, Kiyoon Eum, Yuichi Nohara for helpful comments. He also thanks the organisers and the participants of the workshops Magnitude 2024 and 2025 for providing him with immensely helpful learning opportunities. This work is partially supported by JSPS KAKENHI Grant Number JP23K03120, JP24K00524, and by MEXT Promotion of Distinctive Joint Research Center Program JPMXP0723833165 and Osaka Metropolitan University Strategic Research Promotion Project (Development of International Research Hubs).

\section{Review and re-interpretation of results for finite metric spaces} \label{scrrirffms}
Let $(X,\mathsf{d})$ be a finite metric space and $q$ be a formal variable. We define a square matrix $Z_X$ of size $|X|$ by
\begin{equation} \label{eqdfsmlmx}
	(Z_X)_{xy} := q^{\mathsf{d}(x,y)}
\end{equation}
for $x,y \in X$, and set
\begin{equation} \label{eqtrdmag}
	\mathrm{Mag}(X, \mathsf{d} ):= \sum_{x,y \in X} (Z^{-1}_X)_{xy} \in \mathbb{Z} [ \! [ q^{>0} ] \! ]
\end{equation}
where $\mathbb{Z} [ \! [ q^{>0} ] \! ]$ stands for the formal power series ring generated by positive powers of a formal variable $q$, noting that $\det Z_X$ is invertible in $\mathbb{Z} [ \! [ q^{>0} ] \! ]$. Note that a geometric interpretation of magnitude is also given in recent works \cite{AG25,Dev25}. Define another square matrix $Y_X$ of size $|X|$ as $Y_X := Z_X -I_{|X|}$, which satisfies
\begin{equation} \label{eqdfyxxy}
	(Y_X)_{xy} = (Z_X-I_{|X|})_{xy} = \begin{cases}
		0 &\quad (x=y) \\
		q^{\mathsf{d}(x,y)} &\quad (x \neq y)
	\end{cases}
\end{equation}
to write, following \cite[Proof of Theorem 7.14]{LeiShu21},
\begin{align}
	(Z_X^{-1})_{xy} &= ((I_{|X|}+Y_X)^{-1})_{xy} \notag \\
	&= \sum_{n=0}^{\infty} (-1)^k (Y^k_X)_{xy} \label{exyxivm} \\
	&= \delta_{xy}+\sum_{n=1}^{\infty} \sum_{\substack{x=x_0 \neq \cdots \neq x_{n}=y \\ x_2 , \dots , x_{n} \in X}} (-1)^n q^{\mathsf{d}(x_1, x_2) + \mathsf{d}(x_2, x_3) + \cdots + \mathsf{d}(x_{n-1}, x_{n})}, \label{eqzxivexp}
\end{align}
where $\delta_{xy}$ is the Kronecker delta, which makes sense as an element in $\mathbb{Z} [ \! [ q^{>0} ] \! ]$ (or in fact in $\mathbb{Z} [ \! [ q ] \! ]$ if the distance is integer-valued, as in graphs). Recall that a collection of finitely many ordered points $x_0 , x_1 , \dots , x_n$ in $X$ is said to be a \textbf{proper chain} if $x_{k-1} \neq x_k$ for all $k=1 , \dots , n$ (see e.g.~\cite[\S 2.1]{KY21}). The set of all proper chains is commonly denoted by
\begin{equation} \label{eqdfnppch}
	P_n (X) := \{ (x_0, x_1 , \dots , x_{n}) \in X^{n+1} \mid x_{k-1} \neq x_k \text{ for all } k=1 , \dots ,  n \} .
\end{equation}
By using the counting measure $\mu_{\sharp}$ on $X$, and its product measure $\mu_{\sharp}^{n+1}$ on $X^{n+1}$, we can write the magnitude as
\begin{equation*}
	\mathrm{Mag}(X, \mathsf{d} ) = \mu_{\sharp} (X) +\sum_{n=1}^{\infty} (-1)^n \int_{P_n (X)} q^{\sum_{k=1}^n \mathsf{d} (x_{k-1} , x_k)} \mathrm{d} \mu_{\sharp}^{n+1}
\end{equation*}
by writing the sum over $x,y$ in (\ref{eqzxivexp}) in terms of the counting measure. Note that essentially the same formula was presented in \cite[equation (1)]{HepWil17}, \cite[Proposition 3.9]{Lein19}, \cite[Corollary 7.15]{LeiShu21}, and \cite[Proposition 2.1]{Ohara24}.

While the above equation can serve as a starting point of most of what follows, computation similar to the equation (\ref{eqzxivexp}) is also well-known in the context of path integrals (see e.g.~\cite{FH,Mum}), which gives further flexibility in the definition of the magnitude in \S \ref{scpidmmms}. By hypothetically interpreting each pair of distinct points $x_{k-1}, x_k \in X$ to be connected by a unique geodesic of length $\mathsf{d} (x_{k-1},x_k)$ for each $k=1, \dots , n$, we also call a proper chain $\gamma := ( x_0 , x_1 , \dots , x_n ) \in X^{n+1}$ to be an \textbf{$n$-piecewise geodesic} connecting $x_0,x_n \in X$; we just changed the terminology with no mathematical content, to motivate the later definition in \S \ref{scpidmmms}, and the existence and uniqueness of these (hypothetical) geodesics are purely for the sake of exposition (see in particular \S \ref{scfms}). For an $n$-piecewise geodesic $\gamma := ( x_0 , x_1 , \dots , x_n )$, its \textbf{length} can be naturally defined as
\begin{equation*}
	\mathrm{Len} (\gamma) := \sum_{k=1}^n \mathsf{d} (x_{k-1} , x_k).
\end{equation*}
Writing
\begin{equation*}
	\Omega^n_{x,y} := \{ \text{$n$-piecewise geodesics connecting $x$ and $y$}\},
\end{equation*}
the formula (\ref{eqzxivexp}) can be interpreted as
\begin{equation*}
	(Z_X^{-1})_{xy} = \delta_{xy}+\sum_{n=1}^{\infty} \sum_{\gamma \in \Omega^n_{x,y}} (-1)^{n}q^{\mathrm{Len} (\gamma)} \in \mathbb{Z} [ \! [ q^{>0} ] \! ] .
\end{equation*}
Introducing a discrete counting measure $\Gamma^{n, \sharp }_{x,y}$ on each finite set $\Omega^n_{x,y}$, we may write the above as
\begin{equation*}
	(Z_X^{-1})_{xy} = \delta_{xy}+\sum_{n=1}^{\infty} (-1)^n \int_{\gamma \in \Omega^n_{x,y}} q^{\mathrm{Len} (\gamma)} \mathrm{d} \Gamma^{n, \sharp }_{x,y} . 
\end{equation*}
Thus, with the counting measure $\mu_{\sharp}$ on $X$, we get the ``path integral'' formalism of the magnitude as
\begin{equation*}
\mathrm{Mag}(X , \mathsf{d}) = \mu_{\sharp} (X) +\sum_{n=1}^{\infty} (-1)^n \int_{X \times X} \left( \int_{\gamma \in \Omega^n_{x,y}} q^{\mathrm{Len} (\gamma)} \mathrm{d} \Gamma^{n, \sharp }_{x,y} \right) \mathrm{d} \mu_{\sharp}^2 , 
\end{equation*}
where we decreed $\Omega^1_{x,y} = \emptyset$ when $x=y$ (this is consistent with the usual definition of geodesics, as in \cite{BriHae}).

A point $\bm{x}:= (x_0, x_1 , \dots , x_{n}) \in P_n (X)$ corresponds one-to-one with an $n$-piecewise geodesic $\gamma$ connecting $x_0$ and $x_n$. We write $\Omega_{\bm{x}}$ for the singleton set consisting of the unique $\gamma$ corresponding to $\bm{x}= (x_0, x_1 , \dots , x_{n}) \in P_n (X)$, endowed with the counting measure $\Gamma_{\bm{x}}$ which is necessarily trivial, in the sense that $\Gamma_{\bm{x}} (\Omega_{\bm{x}}) = 1$ for any $\bm{x} \in P_n (X)$.

The above argument gives us the following re-formulation of the magnitude, which naturally involves a counting measure in addition to the finite metric space $(X,\mathsf{d})$.

\begin{proposition} \label{ppfmgpitg}
	Let $(X,\mathsf{d})$ be a finite metric space, and $\mu_{\sharp}$ be a counting measure on $X$. Then the magnitude can be written as
	\begin{equation*}
		\mathrm{Mag}(X, \mathsf{d} ) = \mu_{\sharp} (X) +\sum_{n=1}^{\infty} (-1)^n \int_{P_n (X)} q^{\sum_{k=1}^n \mathsf{d} (x_{k-1} , x_k)} \mathrm{d} \mu_{\sharp}^{n+1} \in \mathbb{Z} [ \! [ q^{>0} ] \! ].
	\end{equation*}
	Moreover, if we decree that any two distinct points in $X$ can be connected by a unique geodesic, the magnitude of $X$ can be written as
	\begin{equation*}
	\mathrm{Mag}(X , \mathsf{d}) =\mu_{\sharp} (X) +  \sum_{n=1}^{\infty} (-1)^n \int_{\bm{x} \in X^{n+1}}  \left(  \int_{\gamma \in \Omega_{\bm{x}}} q^{\mathrm{Len} (\gamma)}  \mathrm{d} \Gamma_{\bm{x}} \right) \mathrm{d} \mu^{n+1}_{\sharp} \in \mathbb{Z} [ \! [ q^{>0} ] \! ] .
	\end{equation*}
\end{proposition}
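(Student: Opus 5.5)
The plan is to start from the definition (\ref{eqtrdmag}) and expand $Z_X^{-1}$ as a Neumann series in $Y_X = Z_X - I_{|X|}$. This expansion has to be justified formally in $\mathbb{Z}[\![q^{>0}]\!]$, not analytically.

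\textbf{Step 1 (convergence of the Neumann series).} Every entry of $Y_X$ lies in the ideal generated by $q^{\delta}$, where $\delta := \min_{x\neq y}\mathsf{d}(x,y)>0$ (the diagonal entries vanish by (\ref{eqdfyxxy})). Hence every entry of $Y_X^n$ is a finite sum of monomials $q^{a}$ with $a \ge n\delta$. For any fixed exponent bound $N$, only the finitely many $n$ with $n\delta \le N$ contribute coefficients of $q^{a}$ with $a\le N$. So $S:=\sum_{n\ge 0}(-1)^nY_X^n$ converges entrywise in the $q$-adic (valuation) topology of $\mathbb{Z}[\![q^{>0}]\!]$.

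\textbf{Step 2 (identifying the inverse).} The telescoping identity $(I+Y_X)\sum_{n=0}^{N}(-1)^nY_X^n = I + (-1)^N Y_X^{N+1}$ holds, and the error term tends to $0$ in this topology. Therefore $(I+Y_X)S = I$, and likewise $S(I+Y_X) = I$. Since $\det Z_X$ is invertible, $S = Z_X^{-1}$, which is (\ref{exyxivm}).

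\textbf{Step 3 (summing entries).} Expand the matrix power: $(Y_X^n)_{xy}=\sum_{x_1,\dots,x_{n-1}} \prod_{k}(Y_X)_{x_{k-1}x_k}$ with $x_0=x$ and $x_n=y$. Terms with some $x_{k-1}=x_k$ vanish, so the sum runs exactly over $P_n(X)$ with the prescribed endpoints, and each surviving term equals $q^{\sum_k \mathsf{d}(x_{k-1},x_k)}$. This gives (\ref{eqzxivexp}). Summing over $x,y$ then gives a sum over all of $P_n(X)$. The $n=0$ term contributes $\sum_{x,y}\delta_{xy}=|X|=\mu_\sharp(X)$. Finally, a finite sum over a set equals the integral against the counting measure $\mu_\sharp^{n+1}$, which yields the first formula.

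\textbf{Step 4 (the geodesic form).} Extend the integrand to all of $X^{n+1}$ by declaring $\Omega_{\bm x}=\emptyset$ when $\bm x\notin P_n(X)$. This is consistent with the convention $\Omega^1_{x,x}=\emptyset$, since a degenerate step admits no geodesic. The inner integral then vanishes off $P_n(X)$. For $\bm x\in P_n(X)$, $\Omega_{\bm x}$ is a singleton $\{\gamma\}$ with $\Gamma_{\bm x}(\Omega_{\bm x})=1$ and $\mathrm{Len}(\gamma)=\sum_k\mathsf{d}(x_{k-1},x_k)$. So the inner integral equals $q^{\mathrm{Len}(\gamma)}$, and the second formula reduces to the first.

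The main obstacle is Step 1: making the infinite sum meaningful in $\mathbb{Z}[\![q^{>0}]\!]$, which with real exponents means generalized power series with well-ordered support. I would handle it with the uniform lower bound $n\delta$ on exponents, which guarantees that each coefficient receives only finitely many contributions. The remaining steps are bookkeeping.
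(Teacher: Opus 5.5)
Your proposal is correct and follows essentially the paper's route: Neumann expansion of $(I+Y_X)^{-1}$, expansion of $Y_X^n$ as a sum over chains in which the vanishing diagonal entries leave only proper chains, rewriting the finite sums as integrals against $\mu_\sharp^{n+1}$, and taking $\Omega_{\bm{x}}$ to be a singleton of measure $1$ on $P_n(X)$ and empty off it for the geodesic form. Your Steps 1--2 go further than the paper, which only asserts (citing Leinster--Shulman) that the expansion makes sense in $\mathbb{Z}[\![q^{>0}]\!]$; you justify this formally via the lower bound $n\delta$ on exponents, and your version of the chain sum also silently corrects the index typos in the paper's display (\ref{eqzxivexp}).
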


We further define
\begin{equation*}
	P^l_n (X):= \left\{ (x_0, \dots , x_n ) \in P_n (X) \; \left| \; \sum_{k=1}^n \mathsf{d} (x_{k-1}, x_k)= l \right\} \right.
\end{equation*}
to write
\begin{align}
	\mathrm{Mag}(X , \mathsf{d}) &= |X| + \sum_{n=1}^{\infty} \sum_{l>0} (-1)^n\int_{\bm{x} \in P^l_n (X)}  \left(  \int_{\gamma \in \Omega_{\bm{x}}} q^{\mathrm{Len} (\gamma)}  \mathrm{d} \Gamma_{\bm{x}} \right) \mathrm{d} \mu^n_l \notag \\
	&=|X| +\sum_{n=1}^{\infty} (-1)^n \sum_{l>0} q^l\int_{\bm{x} \in P^l_n (X)} |\Omega_{\bm{x}}| \mathrm{d} \mu^n_l \label{eqcmpilsh}
\end{align}
where $\mu^n_l$ is the counting measure on $P^l_n (X)$. Since $\Omega_{\bm{x}}$ is a singleton set, we recover the well-known formula proved in \cite[equation (1)]{HepWil17}, \cite[Proposition 3.9]{Lein19}, \cite[Corollary 7.15]{LeiShu21}
\begin{equation*}
	\mathrm{Mag}(X , \mathsf{d}) =|X| +\sum_{n=1}^{\infty} (-1)^n \sum_{l=1}^{\infty} q^l | P^l_n (X) |
\end{equation*}
which is the Euler characteristic of the magnitude chain complex added over all $l >0$, by noting that $P^l_n (X)$ is precisely the generator of the magnitude chain complex $MC^l_n (X)$, whence $\mathrm{rk} MC^l_n (X) = | P^l_n (X) |$.

In what follows, we shall take
\begin{equation*}
	q:=e^{-1}.
\end{equation*}
The infinite sums above may not converge for this choice of $q$, but we may replace it by a power $e^{-t}$ for some $t >0$ to guarantee the convergence, which is well-known to be equivalent to scaling the metric $\mathsf{d}$ by $t$. It suffices to take $t$ to be large enough so that the expansion (\ref{exyxivm}) converges. This condition is satisfied, for example, when the operator norm of $Y_{X,t}$ satisfies $\Vert Y_{X,t} \Vert_{\mathrm{op}} <1$, where $Y_{X,t}$ is a matrix defined similarly to (\ref{eqdfyxxy}) as
\begin{equation}
	(Y_{X,t})_{xy} := \begin{cases}
		0 &\quad (x=y) \\
		e^{-t \mathsf{d}(x,y)} &\quad (x \neq y) .
	\end{cases}
\end{equation}
The condition $\Vert Y_{X,t} \Vert_{\mathrm{op}} <1$ is satisfied when
\begin{equation} \label{eqprfrsrd}
	\max_{y \in X} \sum_{x \in X} e^{-t \mathsf{d}(x,y)} <1
\end{equation}
by \cite[Lemma 8.1.21]{HJ13}.

\begin{remark}
	In the interpretation above, the magnitude weighting $v_x$ can be written as
\begin{equation*}
	v_x = 1+\sum_{n=1}^{\infty} (-1)^n \int_{y \in X} \left( \int_{\gamma \in \Omega^n_{x,y}} q^{\mathrm{Len} (\gamma)} \mathrm{d} \Gamma^{n, \sharp }_{x,y} \right) \mathrm{d} \mu_{\sharp}^2 \in \mathbb{Z} [ \! [ q^{>0} ] \! ] 
\end{equation*}
for each $x \in X$, and hence can be interpreted as a density function for the magnitude.
\end{remark}

\section{Magnitude of metric measure spaces} \label{scpidmmms}

When $(X, \mathsf{d})$ is an infinite metric space, the magnitude is traditionally defined as the supremum
\begin{equation} \label{eqdfclmg}
	\mathrm{Mag}(X , \mathsf{d}):= \sup_{Y \subset X, \; \text{finite}} \mathrm{Mag} (Y, \mathsf{d} |_Y)
\end{equation}
over all finite subsets of $X$. While a lot is known about the magnitude defined as above, we propose in this paper an alternative definition following the formulation in Proposition \ref{ppfmgpitg}, which admits a straightforward generalisation even when $X$ is not finite, at least formally, by replacing the counting measure by a general one. Two novel features are that it involves choosing the measure on $X$ and on the space of geodesics.

 To avoid confusion, we call the original definitions (\ref{eqtrdmag}) and (\ref{eqdfclmg}) the \textbf{classical magnitude} in this paper.

\subsection{General definition}

Let $(X,\mathsf{d})$ be a length space, and $\mu$ be a Borel measure on $(X,\mathsf{d})$; in particular, the triple $(X,\mathsf{d}, \mu)$ is a metric measure space. We gave two formulations of the magnitude for finite metric spaces in Proposition \ref{ppfmgpitg}. The first one can be generalised as follows.

\begin{definition} \label{dfnptmmgv}
	Let $(X,\mathsf{d})$ be a length space, $\mu$ be a Borel measure on $(X,\mathsf{d})$ such that the total volume $\mu (X)$ is finite, and $N \in \mathbb{Z}_{>0}$. The \textbf{$N$-th partial $\mu$-magnitude} of $(X, \mathsf{d})$ is defined as
	\begin{equation*}
		\mathrm{Mag} (X,\mathsf{d},\mu ; N) := \mu (X) + \sum_{n=1}^{N} (-1)^n \int_{\bm{x} \in P_n (X)}  e^{-\sum_{k=1}^n \mathsf{d} (x_{k-1} , x_k) }  \mathrm{d} \mu^{n+1} ,
	\end{equation*}
	where $P_n (X)$ is the set of all proper chains as defined in (\ref{eqdfnppch}). When the limit
\begin{equation*}
\mathrm{Mag} (X,\mathsf{d},\mu ) := \lim_{N \to \infty} \mathrm{Mag} (X,\mathsf{d},\mu ; N )
\end{equation*}
exists in $\mathbb{R} \cup \{ \pm \infty \}$, we call it the \textbf{$\mu$-magnitude} of $(X,\mathsf{d})$.
\end{definition}

Generalisation of the second formalism in Proposition \ref{ppfmgpitg} allows for more flexibility, since there is an extra freedom in choosing a measure $\Gamma$ on the space of $n$-piecewise geodesics. 

\begin{definition} 
	A path $\gamma$ connecting $x,y \in X$ is a continuous map $\gamma : [0,1] \to (X,\mathsf{d})$ such that $\gamma (0) = x$ and $\gamma (1)=y$.  For each proper chain $\bm{x}:= (x_0, x_1 , \dots , x_{n}) \in P_n (X)$, we define \textbf{the space of $n$-piecewise geodesics} $\Omega_{\bm{x}}$ to be the set consisting of rectifiable paths $\gamma$ connecting $x_0$ and $x_n$ and passing through $x_1 , \dots , x_{n-1}$, such that $\gamma$ is a length-minimising geodesic between $x_{k-1}$ and $x_k$ for all $k=1 , \dots , n$.
\end{definition}

In particular, for each $\gamma \in \Omega_{\bm{x}}$, we have
	\begin{equation*}
	\mathrm{Len} (\gamma ) = \sum_{k=1}^n \mathsf{d} (x_{k-1} , x_k),	 \quad \text{and} \quad \mathrm{Len} (\gamma; x_{k-1},x_k) = \mathsf{d} (x_{k-1} , x_k),	
	\end{equation*}
where $\mathrm{Len} (\gamma; x_{k-1},x_k)$ is the length of $\gamma$ between $x_{k-1}$ and $x_k$.

\begin{definition}
	We write $\Omega_{x,y}$ for the set of all non-constant length-minimising geodesics connecting $x , y \in X$ (thus $\Omega_{x,y} = \emptyset$ when $x=y$). We may thus write
\begin{equation*}
	\Omega_{\bm{x}} = \Omega_{x_0 , x_1} \times \cdots \times \Omega_{x_{n-1},x_n}
\end{equation*}
for each proper chain $\bm{x}:= (x_0, x_1 , \dots , x_{n}) \in P_n (X)$. For each $x,y \in X$, we define a measure $\Gamma_{x,y}$ on $\Omega_{x,y}$. In this paper we only consider the following choices for $\Gamma$.
\begin{enumerate}
	\item $\Gamma^{\mathrm{triv}}$: for any $x,y \in X$ and $\Omega_{x,y}$, the \textbf{trivial measure} $\Gamma_{x,y}^{\mathrm{triv}}$ is defined with respect to the trivial $\sigma$-algebra $\{ \emptyset , \Omega_{x,y} \}$ on $\Omega_{x,y}$, such that $\Gamma_{x,y}^{\mathrm{triv}} ( \Omega_{x,y} ) =1$ if $\Omega_{x,y} \neq \emptyset$.
	\item $\Gamma^{\sharp}$: when $\Omega_{x,y}$ is finite for $x,y \in X$, we can also define the \textbf{counting measure} $\Gamma_{x,y}^{\sharp}$ with respect to the $\sigma$-algebra given by the power set of $\Omega_{x,y}$, such that $\Gamma_{x,y}^{\sharp} ( A ) = |A|$ for any $A \subset \Omega_{x,y}$.
\end{enumerate}
For either choice of $\Gamma$, $\Omega_{\bm{x}}$ has a natural product measure $\Gamma_{\bm{x}}$ induced from $\Gamma_{x_0, x_1}, \dots , \Gamma_{x_{n-1}, x_n}$. We then have
\begin{equation} \label{eqdfomgmk}
	\int_{\gamma \in \Omega_{\bm{x}}} e^{-\mathrm{Len} (\gamma)}  \mathrm{d} \Gamma_{\bm{x}} = e^{-\sum_{k=1}^n \mathsf{d}(x_{k-1}, x_k)} \prod_{k=0}^n \int_{\Omega_{x_{k-1}, x_k}} \mathrm{d}\Gamma_{x_{k-1}, x_k}.
\end{equation}
\end{definition}

\begin{definition} \label{dfnptmg}
	Let $(X,\mathsf{d})$ be a length space, and $\mu$ be a Borel measure on $(X,\mathsf{d})$ such that the total volume $\mu (X)$ is finite. We choose a cutoff parameter $N \in \mathbb{Z}_{>0}$, and a family of measures $\{ \Gamma_{\bm{x}} \}_{\bm{x} \in P_n(X)}$ on the spaces of $n$-piecewise geodesics $\{ \Omega_{\bm{x}} \}_{\bm{x} \in P_n (X)}$, such that the function
	\begin{equation*}
		P_n (X) \ni \bm{x} \mapsto \int_{\gamma \in \Omega_{\bm{x}}} e^{-\mathrm{Len} (\gamma)}  \mathrm{d} \Gamma_{\bm{x}} \in \mathbb{R} , 
	\end{equation*}
	as defined in (\ref{eqdfomgmk}) is integrable over $P_n (X)$ with respect to the product measure $\mu^{n+1}$, for all $n =1, \dots , N$. Then, the \textbf{$N$-th partial $(\mu, \Gamma )$-magnitude} of $(X, \mathsf{d})$ is defined as
	\begin{equation*}
		\mathrm{Mag} (X,\mathsf{d},\mu, \Gamma ; N) := \mu (X) + \sum_{n=1}^{N} (-1)^n \int_{\bm{x} \in P_n (X)}  \left(  \int_{\gamma \in \Omega_{\bm{x}}} e^{-\mathrm{Len} (\gamma) }  \mathrm{d} \Gamma_{\bm{x}} \right) \mathrm{d} \mu^{n+1} .
	\end{equation*}
When the limit
\begin{equation*}
\mathrm{Mag} (X,\mathsf{d},\mu, \Gamma ) := \lim_{N \to \infty} \mathrm{Mag} (X,\mathsf{d},\mu, \Gamma ; N )
\end{equation*}
exists in $\mathbb{R} \cup \{ \pm \infty \}$, we call it the \textbf{$(\mu, \Gamma )$-magnitude} of $(X,\mathsf{d})$.
\end{definition}

\begin{remarks}
	The following observations immediately follow from the definition above.
	\begin{enumerate}
		\item The leading term ($n=0$) is always the volume $\mu (X)$. Thus, the $(\mu , \Gamma)$-magnitude can be regarded as the $\mu$-volume of $X$ with infinitely many corrections given by the metric structure of $X$.
		\item  Note that
		\begin{equation*}
		\int_{\Omega_{x_{k-1}, x_k}} \mathrm{d}\Gamma_{x_{k-1}, x_k} = \Gamma_{x_{k-1}, x_k} (\Omega_{x_{k-1}, x_k} )  =0	
		\end{equation*}
		when there is no geodesic connecting $x_{k-1}$ and $x_k$, in which case the range of integration can be reduced to the subset of $P_n (X)$ that can be pairwise connected by geodesics. In particular, $\Gamma^{\mathrm{triv}}_{\bm{x}} ( \Omega_{\bm{x}})$ can be regarded as an indicator function, which is zero when there exists a pair in $\bm{x} = (x_0, \dots , x_n)$ that cannot be connected by non-constant geodesics, and $1$ otherwise. This observation implies that we have
		\begin{equation*}
		\int_{\bm{x} \in X^{n+1}}  \left(  \int_{\gamma \in \Omega_{\bm{x}}} e^{-\mathrm{Len} (\gamma) }  \mathrm{d} \Gamma_{\bm{x}} \right) \mathrm{d} \mu^{n+1} = \int_{\bm{x} \in P_n (X)}  \left(  \int_{\gamma \in \Omega_{\bm{x}}} e^{-\mathrm{Len} (\gamma) }  \mathrm{d} \Gamma_{\bm{x}} \right) \mathrm{d} \mu^{n+1}
		\end{equation*}
		noting $\Gamma_{x,y} (\Omega_{x,y})= 0 $ when $x=y$, since $\Omega_{x,y} = \emptyset$ in this case. Thus, the range of integration in the definition of the $(\mu , \Gamma)$-magnitude can be extended to $X^{n+1}$, where we note that $\int_{\gamma \in \Omega_{\bm{x}}} e^{-\mathrm{Len} (\gamma) }  \mathrm{d} \Gamma_{\bm{x}}$ jumps discontinuously to zero at $\bm{x} \in X^{n+1} \setminus P_n (X)$.
		\item While any choice of the measure $\Gamma$ works as long as it satisfies the conditions in Definition \ref{dfnptmg}, we mostly deal with $\Gamma^{\mathrm{triv}}$, and occasionally $\Gamma^{\sharp}$, in this paper.
		\item Exactly as in the case when $X$ is finite, we can decree the magnitude weighting to be the density function of the magnitude, but its geometric meaning and its relationship to the magnitude homology is much less clear than the case of finite metric spaces.
	\end{enumerate}
\end{remarks}

The conditions in Definition \ref{dfnptmg} are satisfied when $(X,\mathsf{d})$ is uniquely geodesic (i.e.~any two points can be connected by a unique geodesic, such as CAT(0) spaces, see e.g.~\cite{BriHae}), for which $\Omega_{\bm{x}}$ is a singleton for any $\bm{x} \in  P_n (X)$ and any $n \in \mathbb{Z}_{>0}$. In this case, we have the following formula.

\begin{lemma}
	Suppose that $(X,\mathsf{d})$ is uniquely geodesic, endowed with a measure $\mu$ such that $\mu (X)$ is finite. Then, for any measure $\Gamma$ which assigns $1$ to a singleton set, we have
	\begin{equation*}
		\mathrm{Mag} (X,\mathsf{d},\mu, \Gamma ; N ) = \mu(X)+ \sum_{n=1}^{N} (-1)^n \int_{\bm{x} \in P_n (X)}  e^{- \sum_{k=1}^n \mathsf{d} (x_{k-1}, x_k)} \mathrm{d} \mu^{n+1} .
	\end{equation*}
	for any $N \in \mathbb{Z}_{>0}$. In particular,
	\begin{equation*}
		\mathrm{Mag} (X,\mathsf{d},\mu, \Gamma^{\mathrm{triv}} ; N ) = \mathrm{Mag} (X,\mathsf{d},\mu, \Gamma^{\sharp} ; N ) = \mathrm{Mag} (X,\mathsf{d},\mu; N )
	\end{equation*}
	for any $N \in \mathbb{Z}_{>0}$.
\end{lemma}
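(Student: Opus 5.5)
The plan is to evaluate the inner integral $\int_{\gamma \in \Omega_{\bm{x}}} e^{-\mathrm{Len}(\gamma)} \mathrm{d}\Gamma_{\bm{x}}$ pointwise on $P_n (X)$ via the factorisation (\ref{eqdfomgmk}). Substituting the result into Definition \ref{dfnptmg} should then give the claimed formula term by term.

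First, fix $n \in \{1, \dots , N\}$ and $\bm{x} = (x_0, \dots , x_n) \in P_n (X)$. Since $x_{k-1} \neq x_k$ for each $k$ and $(X,\mathsf{d})$ is uniquely geodesic, each $\Omega_{x_{k-1},x_k}$ is a singleton consisting of a single non-constant length-minimising geodesic. Hence $\Omega_{\bm{x}} = \Omega_{x_0,x_1} \times \cdots \times \Omega_{x_{n-1},x_n}$ is a singleton $\{\gamma_{\bm{x}}\}$ with $\mathrm{Len}(\gamma_{\bm{x}}) = \sum_{k=1}^n \mathsf{d}(x_{k-1},x_k)$. By hypothesis, each factor satisfies $\Gamma_{x_{k-1},x_k}(\Omega_{x_{k-1},x_k}) = 1$. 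The product formula (\ref{eqdfomgmk}) then shows that the inner integral equals $e^{-\sum_{k=1}^n \mathsf{d}(x_{k-1},x_k)}$. (The product there is indexed from $k=0$, but it is clearly meant to run over $k=1,\dots,n$; I will read it that way.)

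Second, I will check the integrability condition required in Definition \ref{dfnptmg}. The function $\bm{x} \mapsto e^{-\sum_k \mathsf{d}(x_{k-1},x_k)}$ is continuous on $X^{n+1}$, because $\mathsf{d}$ is continuous, so it is Borel measurable. It is also bounded by $1$. Its integral over $P_n(X)$ is therefore at most $\mu(X)^{n+1} < \infty$. For the restriction to $P_n(X)$ to make sense, this set must be measurable: it is open in $X^{n+1}$, being the complement of finitely many closed diagonals. So the partial $(\mu,\Gamma)$-magnitude is well defined. Substituting the value from the first step gives the first displayed identity.

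Finally, I will verify that $\Gamma^{\mathrm{triv}}$ and $\Gamma^{\sharp}$ both assign $1$ to singletons. For $\Gamma^{\mathrm{triv}}$ this holds by definition, since $\Omega_{x,y} \neq \emptyset$. For $\Gamma^{\sharp}$, $\Omega_{x,y}$ is finite of cardinality $1$. Comparing with Definition \ref{dfnptmmgv} then yields the second chain of equalities. I do not expect a real obstacle. The only delicate point is the measurability and integrability bookkeeping, and the continuity of $\mathsf{d}$ together with finiteness of $\mu(X)$ handles it.
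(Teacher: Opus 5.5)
Your proposal is correct and follows essentially the paper's argument: the paper gives no separate proof, only the remark that in a uniquely geodesic space $\Omega_{\bm{x}}$ is a singleton for every $\bm{x} \in P_n(X)$, so the inner integral reduces to $e^{-\sum_{k=1}^n \mathsf{d}(x_{k-1},x_k)}$ once $\Gamma$ assigns mass $1$ to that singleton. Your check of the integrability hypothesis of Definition~\ref{dfnptmg} (the integrand is bounded by $1$, $P_n(X)$ is open, and $\mu(X)<\infty$) simply makes explicit the paper's accompanying assertion that those conditions hold in the uniquely geodesic case.
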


Throughout this paper, we will be mostly working with the measure $\Gamma^{\mathrm{triv}}$. While it may seem that $\Gamma^{\mathrm{triv}}$ discards many interesting phenomena to do with non-uniqueness of geodesics, we provide various examples to show that the $(\mu , \Gamma^{\mathrm{triv}})$-magnitude can capture the injectivity radius of a Riemannian manifold, which is closely related to the uniqueness of geodesics. In fact, for most of the examples we deal with in this paper, the $(\mu , \Gamma^{\mathrm{triv}})$-magnitude agrees with the $\mu$-magnitude. In this paper, the choice of $\Gamma$ does not matter as long as it assigns $1$ to a singleton set, except for Examples \ref{ex4cut} and \ref{ex4cutwgt}. Thus, Definition \ref{dfnptmmgv} suffices for most examples; the motivation for giving Definition \ref{dfnptmg} using integrals over geodesics is simply to have more flexibility in the definition.

Since the $(\mu , \Gamma )$-magnitude is given by an infinite series, it is difficult to compute it precisely.  An exception is when $\mu$ is a weight measure on a compact homogeneous Riemannian manifold $X$, for which the $(\mu , \Gamma^{\mathrm{triv}} )$-magnitude reduces to the volume $\mu (X)$ (see Theorem \ref{thmgwgtms} and Corollary \ref{crmgwgtms}). It is even harder to compute the magnitude function $\mathrm{Mag} (X, t \mathsf{d},\mu, \Gamma )$ for $t >0$; it is possible to have a formula for the case when $X$ is a closed interval in $\mathbb{R}$ and $\mu$ is a Lebesgue measure, but it is defined only inductively (see Lemma \ref{lmmgcbinv}). The case when $X$ is a circle with a natural Riemannian volume form $\mu$ already seems very difficult, and the computation is given only up to $N=3$. The details of these examples are given in \S \ref{scexmfdcl}.

\subsection{Finite metric spaces} \label{scfms}

We first prove that Definitions \ref{dfnptmmgv} and \ref{dfnptmg} are consistent with the original one (\ref{eqtrdmag}) for finite metric spaces. While it is straightforward for Definition \ref{dfnptmmgv}, we need to be careful about Definition \ref{dfnptmg}, since any continuous map $\gamma : [0,1] \to (X,\mathsf{d})$ is trivial for finite metric spaces; $(X, \mathsf{d})$ has discrete topology and hence $\gamma$ must be constant.

We decree, as we did in \S \ref{scrrirffms}, that any two distinct points in a finite metric space $(X, \mathsf{d})$ can be connected by a unique length-minimising geodesic, or equivalently that $\Omega_{\bm{x}}$ is a singleton set for any proper chain $\bm{x} \in P_n (X)$. This convention is just a matter of interpretation, and it can be modified for graphs (see Lemma \ref{lmftgrmg}, Examples \ref{ex4cut} and \ref{ex4cutwgt}). In any case, we do not logically rely on this hypothesis outside this subsection, particularly when we deal with the $\mu$-magnitude. The next result shows that, with the interpretation of geodesics as above, Definition \ref{dfnptmg} agrees with the classical magnitude for finite metric spaces.

\begin{theorem} \label{lmcvfmsm}
	Let $(X, \mathsf{d})$ be a finite metric space and $\mu_{\sharp}$ be the counting measure. Then the following hold.
	\begin{enumerate}
		\item If the $\mu_{\sharp}$-magnitude of $(X, \mathsf{d})$ exists, we have
		\begin{equation*}
		\mathrm{Mag} (X,\mathsf{d},\mu_{\sharp}, \Gamma^{\mathrm{triv}}) = \mathrm{Mag} (X,\mathsf{d},\mu_{\sharp}, \Gamma^{\sharp} ) = \mathrm{Mag} (X,\mathsf{d},\mu_{\sharp}),
		\end{equation*}
		when we interpret the $n$-piecewise geodesics for finite metric spaces as above. 
		\item If the power series expansion \textup{(\ref{eqzxivexp})} converges absolutely for $q = e^{-1}$ and agrees with $(Z^{-1}_X)_{x,y}$, for all $x,y \in X$, then the $\mu_{\sharp}$-magnitude of $(X,\mathsf{d})$ exists and agrees with the classical magnitude.
		\item There exists $t_0 >0$ such that the $\mu_{\sharp}$-magnitude of $(X,t \mathsf{d})$ exists and agrees with the classical magnitude of $(X,t \mathsf{d})$ for all $t \ge t_0$.
	\end{enumerate}
\end{theorem}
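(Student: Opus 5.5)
Under the convention of this subsection, $\Omega_{\bm{x}}$ is a singleton for every proper chain $\bm{x}\in P_n(X)$. We therefore have $\Gamma^{\mathrm{triv}}_{\bm{x}}(\Omega_{\bm{x}})=\Gamma^{\sharp}_{\bm{x}}(\Omega_{\bm{x}})=1$. By (\ref{eqdfomgmk}), the inner integral equals $e^{-\sum_k \mathsf{d}(x_{k-1},x_k)}$ for both choices of $\Gamma$. Integrability over $P_n(X)$ is automatic because $\mu_\sharp^{n+1}$ is a finite measure on a finite set.

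\textbf{Part (1).} Since the inner integrals coincide, the three partial magnitudes $\mathrm{Mag}(X,\mathsf{d},\mu_\sharp,\Gamma^{\mathrm{triv}};N)$, $\mathrm{Mag}(X,\mathsf{d},\mu_\sharp,\Gamma^{\sharp};N)$ and $\mathrm{Mag}(X,\mathsf{d},\mu_\sharp;N)$ agree for every $N$. This is exactly the uniquely geodesic lemma above, applied with the stated convention. Hence if one limit exists, all three exist and are equal.

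\textbf{Part (2).} I write the integral against the counting measure as a finite sum:
\begin{equation*}
\int_{P_n(X)} e^{-\sum_{k}\mathsf{d}(x_{k-1},x_k)}\,\mathrm{d}\mu_\sharp^{n+1}=\sum_{x,y\in X}(Y_X^n)_{xy}\big|_{q=e^{-1}} ,
\end{equation*}
using (\ref{eqdfyxxy}): the zero diagonal of $Y_X$ enforces properness of the chain. Therefore
\begin{equation*}
\mathrm{Mag}(X,\mathsf{d},\mu_\sharp;N)=\sum_{x,y}\Big(\delta_{xy}+\sum_{n=1}^N(-1)^n(Y_X^n)_{xy}\Big).
\end{equation*}
This is a finite sum over $(x,y)$ of the $N$-th partial sums of (\ref{eqzxivexp}). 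By hypothesis each of these partial sums converges, absolutely, to $(Z_X^{-1})_{xy}$ at $q=e^{-1}$. A finite sum of convergent sequences converges to the sum of the limits, so the limit exists and equals $\sum_{x,y}(Z_X^{-1})_{xy}$, the classical magnitude.

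Absolute convergence is more than is needed; convergence of each entry suffices. The one subtlety is that the series in (\ref{eqzxivexp}) is indexed by $n$ and, for fixed $n$, only finitely many chains contribute. So the rearrangement from the formal power series in $q$ to the sum grouped by $n$ must be justified. Absolute convergence makes this rearrangement legitimate.

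\textbf{Part (3).} I replace $\mathsf{d}$ by $t\mathsf{d}$, so that $q^{\mathsf{d}}$ becomes $e^{-t\mathsf{d}}$ and $Y_X$ becomes $Y_{X,t}$. Let $\delta=\min_{x\neq y}\mathsf{d}(x,y)>0$. Then
\begin{equation*}
\max_y\sum_{x}(Y_{X,t})_{xy}\le(|X|-1)e^{-t\delta},
\end{equation*}
and this is less than $1$ for $t\ge t_0$ with $t_0>\delta^{-1}\log(|X|-1)$; any $t_0>0$ works if $|X|\le 1$. By condition (\ref{eqprfrsrd}) and \cite[Lemma 8.1.21]{HJ13}, we get $\Vert Y_{X,t}\Vert_{\mathrm{op}}<1$.

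It follows that the Neumann series $\sum_n(-1)^nY_{X,t}^n$ converges absolutely in operator norm, hence entrywise, to $(I+Y_{X,t})^{-1}=Z_{X,t}^{-1}$. Entrywise absolute convergence holds since $|(Y^n)_{xy}|\le\Vert Y\Vert_{\mathrm{op}}^n$ and all entries are nonnegative. Part (2) then applies to $(X,t\mathsf{d})$.

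The main (mild) obstacle is bookkeeping. One must identify the integral over proper chains with the entries of $Y^n$, and check that the hypothesis of (2) matches what the Neumann series bound provides.
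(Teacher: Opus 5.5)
Your proposal is correct. Parts (1) and (2) follow the paper's argument: the paper just cites Proposition \ref{ppfmgpitg} for (2), and your identification $\int_{P_n(X)} e^{-\sum_k \mathsf{d}(x_{k-1},x_k)}\,\mathrm{d}\mu_{\sharp}^{n+1}=\sum_{x,y}(Y_X^n)_{xy}$ is what that citation unpacks to.

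For (3) your route differs. The paper does not use operator norms there. With $\epsilon=\min_{x\neq y}\mathsf{d}(x,y)$, it bounds the $n$-th term by $e^{-tn\epsilon}|P_n(X)|\le |X|(|X|e^{-t\epsilon})^n$. So the series defining $\mathrm{Mag}(X,t\mathsf{d},\mu_{\sharp})$ converges absolutely for $t>(\log|X|)/\epsilon$. This is more elementary, but by itself it only gives existence of the limit. Identifying the limit with $\sum_{x,y}(Z_{X,t}^{-1})_{xy}$ needs one more line. For example, the partial sums $S_N=\sum_{n=0}^N(-Y_{X,t})^n$ satisfy $(I+Y_{X,t})S_N=I-(-Y_{X,t})^{N+1}$ with $Y_{X,t}^{N+1}\to 0$; the paper leaves this step implicit. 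Your Neumann-series argument produces exactly the hypothesis of (2), so the agreement with the classical magnitude is explicit.

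Two small points. First, a column-sum bound controls the spectral radius, so concluding $\Vert Y_{X,t}\Vert_{\mathrm{op}}<1$ for the spectral norm uses that $Y_{X,t}$ is symmetric. Alternatively you can bypass the citation altogether: the maximal column sum is the $\ell^1$-induced operator norm, which is submultiplicative and dominates every entry. Second, your condition $\max_y\sum_x (Y_{X,t})_{xy}<1$ correctly omits the diagonal term $x=y$. The condition (\ref{eqprfrsrd}) as printed includes that term, so it could never hold.
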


\begin{proof}
	Noting that the set $\Omega_{\bm{x}}$ is singleton for any $\bm{x} = (x_0 , \dots , x_n ) \in P_n (X)$ and any $n \in \mathbb{Z}_{>0}$, we find
	\begin{equation*}
		\int_{\gamma \in \Omega_{\bm{x}}} e^{-t\mathrm{Len} (\gamma) }  \mathrm{d} \Gamma^{\mathrm{triv}}_{\bm{x}} = \int_{\gamma \in \Omega_{\bm{x}}} e^{-t\mathrm{Len} (\gamma) }  \mathrm{d} \Gamma^{\sharp}_{\bm{x}} = e^{- t \sum_{k=1}^n d(x_{k-1}, x_k)}
	\end{equation*}
	for any $t>0$, which immediately implies the first claim. The second claim follows from the first and Proposition \ref{ppfmgpitg}.

	It thus remains to prove the third claim. Since $X$ is finite, there exists $\epsilon := \min \{  \mathsf{d} (x,y) \mid x,y \in X , x \neq y \}>0$ which satisfies
	\begin{equation*}
		\int_{\bm{x} \in P_n (X)}  e^{- t \sum_{k=1}^n d(x_{k-1}, x_k)} \mathrm{d} \mu_{\sharp}^{n+1} \le e^{- t n \epsilon } |P_n (X)| \le e^{- t n \epsilon } |X^{n+1}| = |X|(e^{-t \epsilon} |X|)^{n}.
	\end{equation*}
	Thus the infinite sum defining $\mathrm{Mag} (X, t \mathsf{d},\mu_{\sharp} )$ is absolutely convergent for $t > (\log |X|) / \epsilon$.
\end{proof}

The magnitude defined as above behaves well with respect to restriction to finite subsets, in the following sense.

\begin{lemma} \label{lmfsbepm}
	Let $(X,\mathsf{d})$ be a geodesic metric space, and let $Z$ be a finite subset of $X$, regarded as a metric space by the restriction of $\mathsf{d}$. We define the counting measure on $X$ supported on $Z$, by using the delta measure as
	\begin{equation*}
		\mu_{Z,\sharp} :=\sum_{z \in Z} \delta_z .
	\end{equation*}
	Then, for each $N \in \mathbb{Z}_{>0}$, we have
	\begin{equation*}
		\mathrm{Mag} (X,\mathsf{d},\mu_{Z,\sharp}, \Gamma^{\mathrm{triv}} ; N) = \mathrm{Mag} (Z, \mathsf{d}|_{Z},\mu_{Z,\sharp}, \Gamma^{\mathrm{triv}} ; N) ,
	\end{equation*}
	which converges as $N \to \infty$ by scaling up $\mathsf{d}$ if necessary, as in Theorem \ref{lmcvfmsm}.
\end{lemma}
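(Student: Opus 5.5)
The idea is to unwind both sides of the identity directly from Definition \ref{dfnptmg}, using that $\mu_{Z,\sharp}^{n+1}$ is a finite sum of point masses. First, $\mu_{Z,\sharp}(X) = |Z| = \mu_{Z,\sharp}(Z)$, so the $n=0$ terms agree. Next, the product measure satisfies
\begin{equation*}
\mu_{Z,\sharp}^{n+1} = \sum_{\bm{z} \in Z^{n+1}} \delta_{\bm{z}}
\end{equation*}
on $X^{n+1}$. It is therefore supported on $Z^{n+1}$, and for any function $f$ on $P_n(X)$ we have
\begin{equation*}
\int_{P_n (X)} f \, \mathrm{d}\mu_{Z,\sharp}^{n+1} = \sum_{\bm{z} \in P_n (X) \cap Z^{n+1}} f(\bm{z}) .
\end{equation*}
Moreover $P_n(X) \cap Z^{n+1} = P_n(Z)$, because being a proper chain only requires consecutive points to differ. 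Integrability is automatic, since the sum is finite.

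It then remains to compare the integrands
\begin{equation*}
\int_{\gamma \in \Omega_{\bm{z}}} e^{-\mathrm{Len}(\gamma)} \mathrm{d}\Gamma^{\mathrm{triv}}_{\bm{z}}
\end{equation*}
computed in $X$ and in $Z$, for $\bm{z} \in P_n(Z)$. In $X$, which is geodesic, each $\Omega_{z_{k-1},z_k}$ is nonempty because $z_{k-1} \neq z_k$. So $\Gamma^{\mathrm{triv}}_{z_{k-1},z_k}(\Omega_{z_{k-1},z_k}) = 1$, and by (\ref{eqdfomgmk}) the integrand is $e^{-\sum_k \mathsf{d}(z_{k-1},z_k)}$; this holds regardless of whether the geodesics are unique. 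In $Z$ we use the convention of \S\ref{scfms}: $\Omega_{\bm{z}}$ is a singleton carrying length $\sum_k \mathsf{d}|_Z(z_{k-1},z_k)$, so the integrand takes the same value. Summing over $P_n(Z)$ with signs $(-1)^n$ for $n \le N$ then gives the identity.

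The only genuinely delicate point is this comparison of the $\Gamma^{\mathrm{triv}}$-integrands. It is exactly where two hypotheses enter. First, we need $\Gamma^{\mathrm{triv}}$ rather than $\Gamma^{\sharp}$: the number of geodesics in $X$ may differ from the number decreed in $Z$, and may even be infinite. Second, we need $X$ to be geodesic, so that $\Omega_{x,y} \neq \emptyset$ and the indicator equals $1$.

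For the convergence statement, the right-hand side is the partial $\mu_\sharp$-magnitude of the finite space $(Z,\mathsf{d}|_Z)$. By Theorem \ref{lmcvfmsm}(1) and (3), it converges as $N\to\infty$ once $\mathsf{d}$ is replaced by $t\mathsf{d}$ with $t \ge t_0$. Rescaling preserves both the geodesic property of $X$ and the identity above, so the left-hand side converges to the same limit.
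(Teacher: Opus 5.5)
Your proposal is correct and matches the paper's proof step for step. The argument reduces the integral against $\mu_{Z,\sharp}^{n+1}$ to a sum over $P_n(Z)$. Because $X$ is geodesic, each $\Omega_{z_{k-1},z_k}$ is non-empty, so the $\Gamma^{\mathrm{triv}}$-integrand is $e^{-\sum_k \mathsf{d}(z_{k-1},z_k)}$ whether or not geodesics are unique. This matches the value given by the singleton convention on $Z$, and Theorem \ref{lmcvfmsm} gives convergence. The extra details you supply are ones the paper leaves implicit: the agreement of the $n=0$ terms, $P_n(X)\cap Z^{n+1}=P_n(Z)$, and the invariance of the geodesic property under rescaling.
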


\begin{proof}
	First note that, for each $n \in \mathbb{Z}_{>0}$ and each $\bm{x} \in P_n (X)$, the set $\Omega_{\bm{x}}$ is non-empty since $(X,\mathsf{d})$ is assumed to be geodesic. Thus, the $n$-th summand on the left hand side is
	\begin{align*}
		&\int_{\bm{x} \in P_n (X)}  \left(  \int_{\gamma \in \Omega_{\bm{x}}} e^{-\mathrm{Len} (\gamma) }  \mathrm{d} \Gamma^{\mathrm{triv}}_{\bm{x}} \right) \mathrm{d} \mu_{Z,\sharp}^{n+1} \\
		&= \int_{\bm{x} \in P_n(Z) }  e^{-  \sum_{k=1}^n \mathsf{d} (x_{k-1}, x_k)} \left( \prod_{k=0}^n \int_{\Omega_{x_{k-1}, x_k}} \mathrm{d}\Gamma^{\mathrm{triv}}_{x_{k-1}, x_k} \right) \mathrm{d} \mu_{Z, \sharp}^{n+1} \\
		&= \int_{\bm{x} \in P_n (Z)}  e^{-  \sum_{k=1}^n \mathsf{d} (x_{k-1}, x_k)} \mathrm{d} \mu_{Z,\sharp}^{n+1},
	\end{align*}
	irrespectively of whether each $\Omega_{x_{k-1}, x_k}$ is a singleton or not, since the measure $\Gamma^{\mathrm{triv}}$ is trivial and $\Omega_{x_{k-1}, x_k} \neq \emptyset$. The corresponding summand on the right hand side is
	\begin{equation*}
		\int_{\bm{x} \in P_n (Z)}  \left(  \int_{\gamma \in \Omega'_{\bm{x}}} e^{-\mathrm{Len} (\gamma) }  \mathrm{d} \Gamma^{\mathrm{triv}}_{\bm{x}} \right) \mathrm{d} \mu_{Z,\sharp}^{n+1} = \int_{\bm{x} \in P_n (Z)}  e^{- \sum_{k=1}^n \mathsf{d} (x_{k-1}, x_k)} \mathrm{d} \mu_{Z,\sharp}^{n+1},
	\end{equation*}
	where $\Omega'_{\bm{x}}$ is a singleton set corresponding to the proper chain $\bm{x} \in P_n (Z)$, by the convention explained at the beginning of this subsection. This gives the claimed equality for any $N \in \mathbb{Z}_{>0}$, and the convergence follows from Theorem \ref{lmcvfmsm}.
\end{proof}

\subsection{Finite graphs} \label{scitfms}

While the argument as above holds when we decree that any two points in a finite metric space are connected by a unique length-minimising geodesic, there are situations in which geodesics are defined more naturally. In such cases, it can happen that two points in a finite metric space are connected by non-unique geodesics. Concretely, such cases happen when we consider an undirected graph $G$ with no multiple edges, from which we can construct a finite metric space $(G, \mathsf{d}_{G})$ as follows: a path in $G$ is a collection of vertices, each connected by an edge, the length of a path is the number of edges connecting the vertices, and the metric $\mathsf{d}_G$ is the associated length metric. In this case we can naturally define length-minimising geodesics in $(G, \mathsf{d}_{G})$, and two points in this space may be connected by more than two geodesics as in the following example called 4-cut, shown in  Figure \ref{fig4cut} (see \cite[Definition 4.14]{LeiShu21} or \cite[Definition 2.2]{KY21}). The 4-cut has important consequences for the magnitude homology of graphs; see \cite[Corollary 4.6]{Asao}, \cite{Gomi}, and \cite[Theorem 4.21]{LeiShu21}. It is the simplest graph in which two points are connected by non-unique geodesics.

\begin{figure}[h]
\centering
\begin{tikzpicture}
\coordinate (A) at (0, 0);
\coordinate (B) at (2, 0);
\coordinate (C) at (2, 2);
\coordinate (D) at (0, 2);

\draw[thick] (A) -- (B) -- (C) -- (D) -- cycle;


\fill (A) circle (2pt) node[below left] {$x_0$};
\fill (B) circle (2pt) node[below right] {$x_1$};
\fill (C) circle (2pt) node[above right] {$x_2$};
\fill (D) circle (2pt) node[above left] {$x_3$};
\end{tikzpicture}
\caption{$4$-cut}
\label{fig4cut}
\end{figure}

In this case, non-trivial choices for $\Gamma$ lead to new quantities. When we take the counting measure $\Gamma^{\sharp}$, the $(\mu_{\sharp} , \Gamma^{\sharp})$-magnitude can be written by using an inverse matrix as follows, similarly to the original definition (\ref{eqtrdmag}).

\begin{lemma} \label{lmftgrmg}
Let $(G, \mathsf{d}_{G})$ be a finite undirected graph with the length metric as above. Let $\tilde{Z}_G$ be a $|G| \times |G|$ matrix whose $(x,y)$-th entry is defined by
\begin{equation*}
	(\tilde{Z}_G)_{x,y}:= e^{- \mathsf{d}_G (x,y)} \int_{\Omega_{x,y}} \mathrm{d} \Gamma^{\sharp}_{x,y} = e^{- \mathsf{d}_G (x,y)} |\Omega_{x,y}|
\end{equation*}
when $x \neq y$, and $(\tilde{Z}_G)_{x,y} :=1$ when $x=y$.

Then, the $(\mu_{\sharp} , \Gamma^{\sharp})$-magnitude of $(G, \mathsf{d}_G)$ is given by the sum of all entries of the matrix $\tilde{Z}^{-1}_G$ if $\Vert \tilde{Y}_G \Vert_{\mathrm{op}} <1$, where $\tilde{Y}_G := \tilde{Z}_G - I_{|G|}$ is defined similarly to \textup{(\ref{eqdfyxxy})}.
\end{lemma}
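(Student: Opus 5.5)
The plan is to expand the Neumann series of $\tilde{Z}_G^{-1} = (I_{|G|} + \tilde{Y}_G)^{-1}$ and match it term by term with the partial $(\mu_{\sharp},\Gamma^{\sharp})$-magnitudes. First I compute the $n$-th summand in Definition \ref{dfnptmg} for the counting measures. With $\mu_{\sharp}$ on $G$, the integral over $P_n(G)$ becomes a finite sum over proper chains $\bm{x} = (x_0,\dots,x_n)$. By (\ref{eqdfomgmk}) and $\Gamma^{\sharp}_{x,y}(\Omega_{x,y}) = |\Omega_{x,y}|$, the inner integral equals
\begin{equation*}
\prod_{k=1}^n e^{-\mathsf{d}_G(x_{k-1},x_k)} |\Omega_{x_{k-1},x_k}| = \prod_{k=1}^n (\tilde{Y}_G)_{x_{k-1}x_k},
\end{equation*}
since $(\tilde{Y}_G)_{xy} = (\tilde{Z}_G)_{xy}$ for $x \neq y$ and $(\tilde{Y}_G)_{xx}=0$. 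For a connected graph every $\Omega_{x,y}$ with $x \neq y$ is nonempty and finite. For a disconnected graph the entry is simply $0$, which is consistent with $\Gamma^{\sharp}(\emptyset)=0$. In either case the integrability condition is automatic, because the sums are finite.

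Next, because the diagonal of $\tilde{Y}_G$ vanishes, the sum over proper chains can be extended to all of $G^{n+1}$ without changing its value. It then equals $\sum_{x_0,x_n} (\tilde{Y}_G^n)_{x_0x_n}$, the sum of all entries of $\tilde{Y}_G^n$. Writing $\mathbf{1}$ for the all-ones vector, this gives
\begin{equation*}
\mathrm{Mag}(G,\mathsf{d}_G,\mu_{\sharp},\Gamma^{\sharp};N) = \mathbf{1}^{T}\Big(\sum_{n=0}^N (-1)^n \tilde{Y}_G^n\Big)\mathbf{1}.
\end{equation*}
The $n=0$ term is $\mathbf{1}^T\mathbf{1} = |G| = \mu_{\sharp}(G)$.

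Finally, under $\Vert \tilde{Y}_G\Vert_{\mathrm{op}}<1$ I bound $\Vert \tilde{Y}_G^n\Vert_{\mathrm{op}} \le \Vert \tilde{Y}_G\Vert_{\mathrm{op}}^n$. So the Neumann series $\sum_n (-1)^n\tilde{Y}_G^n$ converges absolutely in operator norm to $(I+\tilde{Y}_G)^{-1} = \tilde{Z}_G^{-1}$; invertibility of $I+\tilde{Y}_G$ is part of this standard fact. Since $v \mapsto \mathbf{1}^T v\mathbf{1}$ is a continuous linear functional, the partial sums converge to the sum of all entries of $\tilde{Z}_G^{-1}$. The limit in Definition \ref{dfnptmg} therefore exists and equals that sum.

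There is no serious obstacle here. The only points needing care are the convention $\Omega_{x,x}=\emptyset$, which is what makes the diagonal of $\tilde{Y}_G$ zero and justifies the passage from $P_n(G)$ to $G^{n+1}$, and the bookkeeping in identifying the product of $\Gamma^{\sharp}$-masses with the matrix entries. The argument is essentially that of Proposition \ref{ppfmgpitg}, with $Y_X$ replaced by $\tilde{Y}_G$.
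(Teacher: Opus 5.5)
Your proposal is correct and takes essentially the same route as the paper: under $\Vert \tilde{Y}_G\Vert_{\mathrm{op}}<1$ you expand $\tilde{Z}_G^{-1}=(I_{|G|}+\tilde{Y}_G)^{-1}$ as a Neumann series, and you identify the entry sum of $\tilde{Y}_G^n$ with the chain sum weighted by $\prod_k |\Omega_{x_{k-1},x_k}|\, e^{-\mathsf{d}_G(x_{k-1},x_k)}$, using $\Omega_{x,x}=\emptyset$ to pass between $P_n(G)$ and $G^{n+1}$. Your use of the functional $A\mapsto \mathbf{1}^{T}A\mathbf{1}$ to show that the partial sums converge also makes precise a limit step that the paper's proof leaves implicit.
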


\begin{proof}
If $\Vert \tilde{Y}_G \Vert_{\mathrm{op}} <1$, $\tilde{Z}_{G}$ is invertible and the expansion $\tilde{Z}^{-1}_{G} = \sum_{n=0}^{\infty} (-1)^k \tilde{Y}^k_X$ converges absolutely. We thus repeat the computation in (\ref{eqzxivexp}) to get
\begin{align*}
	 &\sum_{x,y \in G} ((\tilde{Z}_G)^{-1})_{xy} \\
	 &= \mu_{\sharp} (G) +  \sum_{n=1}^{\infty} (-1)^n \int_{x \in G} \int_{y \in G} \sum_{\substack{x=x_0 \neq \cdots \neq x_{n}=y \\ x_2 , \dots , x_{n} \in X}} \prod_{k=1}^{n} |\Omega_{x_{k-1},x_k}| e^{- \mathsf{d}_G (x_{k-1}, x_k)} \mathrm{d} \mu_{\sharp} \mathrm{d} \mu_{\sharp} \\
	 &=\mu_{\sharp} (G) +  \sum_{n=1}^{\infty} (-1)^n \int_{P_n (G)} \left( \int_{\gamma \in \Omega_{\bm{x}}} e^{-\mathrm{Len} (\gamma)}  \mathrm{d} \Gamma^{\sharp}_{\bm{x}} \right) \mathrm{d} \mu^{n+1}_{\sharp} \\
	 &=\mathrm{Mag} (G, \mathsf{d}_G,\mu_{\sharp}, \Gamma^{\sharp}) 
\end{align*}
as required, recalling $\Omega_{x,y} = \emptyset$ when $x=y$.
\end{proof}

The condition $\Vert \tilde{Y}_G \Vert_{\mathrm{op}} <1$ can be satisfied by re-scaling $\mathsf{d}_G$ to $t \mathsf{d}_G$, where $t>0$ is chosen to be large enough so that
\begin{equation} \label{eqprfrsrdt}
	\max_{y \in G} \sum_{x \in G} |\Omega_{x,y}| e^{-t \mathsf{d}_G(x,y)} <1,
\end{equation}
similarly to (\ref{eqprfrsrd}). To simplify the notation, we define
\begin{equation*}
\widetilde{\mathrm{Mag}} (G, \mathsf{d}_{G}):= \sum_{x,y \in G} (\tilde{Z}^{-1}_G)_{xy}	.
\end{equation*}

\begin{example} \label{ex4cut}
When $G$ is a 4-cut, straightforward computation yields the classical magnitude function
\begin{equation*}
	\mathrm{Mag} (G, t \mathsf{d}_G) = \frac{4e^{2t}}{(1+e^{t})^2},
\end{equation*}
which agrees with the $(\mu_{\sharp} , \Gamma^{\mathrm{triv}})$-magnitude of $(G, t \mathsf{d}_G)$ by Theorem \ref{lmcvfmsm} when $t > \log 3 \approx 1.1$ by recalling (\ref{eqprfrsrd}). Similarly, we compute
\begin{equation*}
	\widetilde{\mathrm{Mag}} (G, t \mathsf{d}_{G}) = \frac{4e^{2t}}{4+e^{4t}} \left( 1+(1-e^t)^2 \right) ,
\end{equation*}
which agrees with the $(\mu_{\sharp} , \Gamma^{\sharp})$-magnitude of $(G, t \mathsf{d}_G)$ by Lemma \ref{lmftgrmg} when $t > \log 4 \approx 1.4$ by (\ref{eqprfrsrdt}). These two functions can be compared as in Figure \ref{fig4cstcps}.

\begin{figure}
\begin{tikzpicture}
\begin{axis}[
    axis lines = left,
    xlabel = {$t$},
    ylabel = {Magnitude},
    domain = 0:6, 
    samples = 200,
    grid = major,
    legend pos = south east,
    width = 10cm,
    height = 7cm,
    ymin = 0, ymax = 4.5,
    xmin = 0, xmax = 6,
    thick
]

\addplot [
    black,
] { (4 * exp(2*x)) / (1 + exp(x))^2 };
\addlegendentry{$\mathrm{Mag} (G, t \mathsf{d}_G)$}

\addplot [
    black,
    dashed,
] { (4 * exp(2*x) * (1 + (1 - exp(x))^2)) / (4 + exp(4*x)) };
\addlegendentry{$\widetilde{\mathrm{Mag}} (G, t \mathsf{d}_{G})$}

\end{axis}
\end{tikzpicture}
\caption{Comparison of the magnitude functions for $(G, t \mathsf{d}_G)$ in Example \ref{ex4cut}}
\label{fig4cstcps}
\end{figure}
\end{example}

\begin{example} \label{ex4cutwgt}
If we allow ourselves to consider weighted graphs (although it is not commonly done in the literature on magnitude), we find that the $(\mu_{\sharp} , \Gamma^{\sharp})$-magnitude can distinguish the 4-cut above and the graph $G'$ in Figure \ref{fig4cutedg} which has an extra diagonal edge of length 2. In this case, we can regard $x_0$ and $x_2$ to be connected by three geodesics.

This extra diagonal does not affect the similarity matrix, since $(G, \mathsf{d}_G)$ and $(G', \mathsf{d}_{G'})$ define the same metric space. In particular, the magnitude of $(G', \mathsf{d}_{G'})$ is the same as that of $(G, \mathsf{d}_G)$. We thus get
\begin{equation*}
	 \mathrm{Mag} (G', t \mathsf{d}_{G'}) = \frac{4e^{2t}}{(1+e^{t})^2}
\end{equation*}
from Example \ref{ex4cut}, agreeing with $\mathrm{Mag} (G', t \mathsf{d}_{G'},\mu_{\sharp}, \Gamma^{\mathrm{triv}})$ for $t > \log 3 \approx 1.1$, but direct computation using Lemma \ref{lmftgrmg} yields
\begin{equation*}
	 \widetilde{\mathrm{Mag}} (G', t \mathsf{d}_{G'})  = \frac{4e^{2t}}{6+e^{2t}+e^{4t}} (2-2e^t+e^{2t}), 
\end{equation*}
which agrees with $\mathrm{Mag} (G', t \mathsf{d}_{G'},\mu_{\sharp}, \Gamma^{\sharp})$ for $t > \log 5 \approx 1.6$, but it does not equal $\mathrm{Mag} (G', t \mathsf{d}_{G'})$ (see Figure \ref{fig4cstcpsp}). What this result shows is that the $(\mu_{\sharp} , \Gamma^{\sharp})$-magnitude is not an invariant of finite metric spaces when it is extended to weighted graphs, and that it encodes finer information about the weighted graph, in terms of the number of geodesics (edges) connecting distinct vertices.

\begin{figure}[h]
\centering
\begin{tikzpicture}
\coordinate (A) at (0, 0);
\coordinate (B) at (2, 0);
\coordinate (C) at (2, 2);
\coordinate (D) at (0, 2);

\draw[thick] (A) -- node[auto=right]{$1$} (B) -- node[auto=right]{$1$} (C) -- node[auto=right]{$1$} (D) -- node[auto=right]{$1$}cycle;

\draw[thick] (A) to [out=10,in=-100] node[auto=left]{$2$} (C);

\fill (A) circle (2pt) node[below left] {$x_0$};
\fill (B) circle (2pt) node[below right] {$x_1$};
\fill (C) circle (2pt) node[above right] {$x_2$};
\fill (D) circle (2pt) node[above left] {$x_3$};
\end{tikzpicture}
\caption{$4$-cut with an extra edge}
\label{fig4cutedg}
\end{figure}

\begin{figure}
\begin{tikzpicture}
\begin{axis}[
    axis lines = left,
    xlabel = {$t$},
    ylabel = {Magnitude},
    domain = 0:6, 
    samples = 200,
    grid = major,
    legend pos = south east,
    width = 10cm,
    height = 7cm,
    ymin = 0, ymax = 4.5,
    xmin = 0, xmax = 6,
    thick
]

\addplot [
    black,
] { (4 * exp(2*x)) / (1 + exp(x))^2 };
\addlegendentry{$\mathrm{Mag} (G', t \mathsf{d}_{G'})$}

\addplot [
    black,
    dashed,
] { (4 * exp(2*x) * (2 - 2 * exp(x) + exp(2*x)) / (6 + exp(2*x) + exp(4*x)) };
\addlegendentry{$\widetilde{\mathrm{Mag}} (G', t \mathsf{d}_{G'})$}

\end{axis}
\end{tikzpicture}
\caption{Comparison of the magnitude functions for $(G', t \mathsf{d}_{G'})$ in Example \ref{ex4cutwgt}}
\label{fig4cstcpsp}
\end{figure}

\end{example}

\subsection{Convergence for compact geodesic metric spaces} \label{sccfcgms}

The definition of the $(\mu, \Gamma)$-magnitude is given in terms of an infinite series, which may not converge in general. We give two sufficient conditions for the convergence when $(X, \mathsf{d})$ is a compact geodesic metric space, and prove some continuity results for the $(\mu , \Gamma^{\mathrm{triv}})$-magnitude with respect to the measure $\mu$.

We first define the following condition which is satisfied for a wide range of examples, e.g.~when $(X, \mathsf{d})$ is a Riemannian manifold and $\mu$ is absolutely continuous with respect to the Lebesgue measure.

\begin{definition}
	We say that a Borel measure $\mu$ on a metric space $(X , \mathsf{d})$ \textbf{puts no mass on non-proper chains} if the product measure $\mu^{n+1}$ satisfies $\mu^{n+1} (X^{n+1} \setminus P_n (X)) =0$ for any $n \in \mathbb{Z}_{>0}$.
\end{definition}

Henceforth we assume that $(X, \mathsf{d})$ is geodesic metric space, and that $\mu$ is a Radon measure. An important simplification happens in this case since $\Omega_{\bm{x}} \neq \emptyset$ for any $\bm{x} \in P_n (X)$, which implies
\begin{equation*}
	\Gamma^{\mathrm{triv}}_{\bm{x}} (\Omega_{\bm{x}}) =1 
\end{equation*}
for any $\bm{x} \in P_n (X)$. Together with the definition above, we immediately have the following.

\begin{theorem} \label{thgmsnmnpc}
	Suppose that $(X, \mathsf{d})$ is geodesic metric space and $\mu$ is a Radon measure which puts no mass on non-proper chains. Then for any $N \in \mathbb{Z}_{>0}$ we have
	\begin{equation*}
		\mathrm{Mag}(X, \mathsf{d}, \mu , \Gamma^{\mathrm{triv}};N) =\mu (X) +\sum_{n=1}^{N} (-1)^n \int_{X^{n+1}} e^{-\sum_{k=1}^n \mathsf{d}(x_{k}, x_{k+1})} \mathrm{d} \mu^{n+1}.
	\end{equation*}
	In particular, $\mathrm{Mag}(X, \mathsf{d}, \mu , \Gamma^{\mathrm{triv}};N) = \mathrm{Mag}(X, \mathsf{d}, \mu ;N)$.
\end{theorem}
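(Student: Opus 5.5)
The argument is a direct unwinding of the definitions, done term by term for each $n = 1, \dots , N$; the $n=0$ term $\mu(X)$ is the same on both sides.

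First, since $(X,\mathsf{d})$ is geodesic, every pair of distinct points is joined by at least one non-constant length-minimising geodesic. Hence $\Omega_{x_{k-1},x_k} \neq \emptyset$ for every $\bm{x} \in P_n(X)$ and every $k$, and so $\Gamma^{\mathrm{triv}}_{x_{k-1},x_k}(\Omega_{x_{k-1},x_k}) = 1$. By the product formula (\ref{eqdfomgmk}), the inner integral is therefore
\begin{equation*}
\int_{\gamma \in \Omega_{\bm{x}}} e^{-\mathrm{Len}(\gamma)} \, \mathrm{d}\Gamma^{\mathrm{triv}}_{\bm{x}} = e^{-\sum_{k=1}^n \mathsf{d}(x_{k-1},x_k)}
\end{equation*}
for all $\bm{x} \in P_n(X)$. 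This function is continuous on $X^{n+1}$ and bounded by $1$. Since $\mu$ is Radon, it is finite on the compact set (or at least $\mu(X)<\infty$ is assumed in the definition), so the function is $\mu^{n+1}$-integrable on $P_n(X)$. In particular the integrability hypothesis of Definition \ref{dfnptmg} holds. The $n$-th term of $\mathrm{Mag}(X,\mathsf{d},\mu,\Gamma^{\mathrm{triv}};N)$ thus equals the $n$-th term of $\mathrm{Mag}(X,\mathsf{d},\mu;N)$ from Definition \ref{dfnptmmgv}, which already proves the final assertion.

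Second, to pass from integration over $P_n(X)$ to integration over $X^{n+1}$, use that $\mu$ puts no mass on non-proper chains: $\mu^{n+1}(X^{n+1}\setminus P_n(X)) = 0$. Adding the integral of the bounded function $e^{-\sum_k \mathsf{d}(x_{k-1},x_k)}$ over this null set changes nothing, so
\begin{equation*}
\int_{P_n(X)} e^{-\sum_k \mathsf{d}(x_{k-1},x_k)} \, \mathrm{d}\mu^{n+1} = \int_{X^{n+1}} e^{-\sum_k \mathsf{d}(x_{k-1},x_k)} \, \mathrm{d}\mu^{n+1}.
\end{equation*}
The indexing in the statement, $\mathsf{d}(x_k,x_{k+1})$, I read as the same consecutive-pairs sum, i.e. a relabelling.

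The only point needing care is measurability. Note that $P_n(X)$ is open in $X^{n+1}$, being a finite intersection of complements of closed diagonals, so it is Borel and the restricted integrals make sense. I expect no real obstacle beyond this.
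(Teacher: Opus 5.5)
Your proposal is correct and is the paper's own argument: geodesicity gives $\Omega_{\bm{x}}\neq\emptyset$, so $\Gamma^{\mathrm{triv}}_{\bm{x}}(\Omega_{\bm{x}})=1$ and the inner integral is $e^{-\sum_k \mathsf{d}(x_{k-1},x_k)}$, and the no-mass-on-non-proper-chains hypothesis lets you replace $P_n(X)$ by $X^{n+1}$. Your side remarks are also right: the equality with $\mathrm{Mag}(X,\mathsf{d},\mu;N)$ does not need the null-set hypothesis, and $\mathsf{d}(x_k,x_{k+1})$ is a typo for consecutive pairs. One small correction: integrability should rest on $\mu(X)<\infty$ from the definition, not on compactness, which is not assumed here.
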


The convergence as $N \to \infty$ can be guaranteed by the following theorem.

\begin{theorem} \label{ppexmgpm}
	Suppose that $(X,\mathsf{d})$ is a compact geodesic metric space and $\mu$ is a Radon probability measure which puts no mass on non-proper chains, and that the support of $\mu$ is not a singleton set. Then
	\begin{equation*}
		\mathrm{Mag} (X,\mathsf{d}, \mu , \Gamma^{\mathrm{triv}})  = \lim_{N \to \infty} \mathrm{Mag}(X, \mathsf{d}, \mu , \Gamma^{\mathrm{triv}};N)
	\end{equation*}
	exists as a real number.
\end{theorem}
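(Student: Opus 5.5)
The plan is to write the partial magnitudes as quadratic forms of an integral operator on $L^2(\mu)$ and show that this operator has norm strictly less than $1$; the series then converges absolutely, faster than a geometric series.

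\textbf{Step 1: rewrite the terms.} By Theorem \ref{thgmsnmnpc},
\begin{equation*}
\mathrm{Mag}(X,\mathsf{d},\mu,\Gamma^{\mathrm{triv}};N) = 1 + \sum_{n=1}^N (-1)^n a_n, \qquad a_n := \int_{X^{n+1}} \prod_{k=1}^n e^{-\mathsf{d}(x_{k-1},x_k)} \, \mathrm{d}\mu^{n+1}.
\end{equation*}
It therefore suffices to prove $\sum_n |a_n| < \infty$.

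\textbf{Step 2: the integral operator.} Define $K$ on $L^2(X,\mu)$ by
\begin{equation*}
(Kf)(x) := \int_X e^{-\mathsf{d}(x,y)} f(y)\,\mathrm{d}\mu(y).
\end{equation*}
Since $X$ is compact, $\mathsf{d}$ is continuous and bounded, so the kernel is a bounded Borel function. As $\mu$ is a probability measure, the kernel lies in $L^2(\mu\times\mu)$ and $K$ is Hilbert--Schmidt. The integrand defining $a_n$ is nonnegative, so Tonelli's theorem lets us integrate iteratively. This gives $a_n = \langle \mathbf{1}, K^n \mathbf{1}\rangle_{L^2(\mu)}$, where $\mathbf{1}$ is the constant function, with $\Vert \mathbf{1}\Vert_{L^2}=1$. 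Hence
\begin{equation*}
|a_n| \le \Vert K\Vert_{\mathrm{op}}^n \le \Vert K\Vert_{\mathrm{HS}}^n, \qquad \Vert K\Vert_{\mathrm{HS}}^2 = \int_{X\times X} e^{-2\mathsf{d}(x,y)}\,\mathrm{d}\mu^2.
\end{equation*}

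\textbf{Step 3: the key estimate $c := \Vert K\Vert_{\mathrm{HS}} < 1$.} I expect this to be the only real point. Apply the hypothesis that $\mu$ puts no mass on non-proper chains with $n=1$: the diagonal $\{x=y\}$ has $\mu^2$-measure zero. Therefore $e^{-2\mathsf{d}(x,y)}<1$ for $\mu^2$-almost every $(x,y)$. Since $\mu^2(X^2)=1$, the integral of a function that is strictly less than $1$ almost everywhere is strictly less than $1$. Concretely, the sets $\{\mathsf{d}>1/m\}$ increase to a set of full measure, so one of them has positive measure and the integral drops below $1$ by a definite amount. (The assumption that the support of $\mu$ is not a singleton is consistent with this, and in fact already follows from the diagonal having measure zero.)

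\textbf{Step 4: conclusion.} From Steps 2 and 3, $|a_n|\le c^n$ with $c<1$. The series $\sum_n (-1)^n a_n$ converges absolutely, so $\lim_{N\to\infty}\mathrm{Mag}(X,\mathsf{d},\mu,\Gamma^{\mathrm{triv}};N)$ exists as a real number. It moreover satisfies the bound $|\mathrm{Mag}(X,\mathsf{d},\mu,\Gamma^{\mathrm{triv}}) - 1| \le c/(1-c)$.
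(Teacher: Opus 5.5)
Your proof is correct, but it takes a different route from the paper. The paper stays with the row-sum bound that mirrors the finite criterion (\ref{eqprfrsrd}). By dominated convergence $y\mapsto\int_X e^{-\mathsf{d}(x,y)}\,\mathrm{d}\mu(x)$ is continuous, so by compactness it attains its maximum $c$ at some $z$. Then $c<1$, because equality would force $\mu$ to be concentrated at $z$, contradicting that $\mathrm{supp}(\mu)$ is not a singleton. Integrating out the variables one at a time gives $a_n\le c^n$.

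You instead write $a_n=\langle\mathbf{1},K^n\mathbf{1}\rangle_{L^2(\mu)}$, bound it by $\Vert K\Vert_{\mathrm{HS}}^n$, and get $\Vert K\Vert_{\mathrm{HS}}<1$ from $\mu^2(\{x=y\})=0$. The two arguments are closely linked. Since the kernel is symmetric and nonnegative, Schur's test gives $\Vert K\Vert_{\mathrm{op}}\le c$, so both are upper bounds for the same operator norm. For an invariant measure on a homogeneous space, the paper's $c$ is the sharper one, by Cauchy--Schwarz.

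What your route buys:
\begin{itemize}
\item It needs no maximisation, hence no compactness or continuity in $y$, and it gives an explicit constant from a single double integral.
\item It shows, as you note, that the singleton hypothesis is redundant.
\item It would yield Theorem \ref{ppexmgpmlt} almost for free. Indeed $\Vert K_t\Vert_{\mathrm{HS}}^2=\int_{X^2}e^{-2t\mathsf{d}}\,\mathrm{d}\mu^2\to\mu^2(\{x=y\})=0$ as $t\to\infty$ by dominated convergence, and $|a_n|\le\mu(X)\Vert K_t\Vert_{\mathrm{op}}^n$.
\end{itemize}
The paper's route is more elementary. Its sup-type constant is also the one it later makes uniform in Propositions \ref{thwcvpmmj}, \ref{thwcvmj} and Theorem \ref{thmfkeqm}.
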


\begin{proof}
	By the Lebesgue convergence theorem, we find that the map $X \to \mathbb{R}$ defined by
	\begin{equation*}
		y \mapsto \int_{x \in X} e^{- \mathsf{d} (x,y) } \mathrm{d} \mu 
	\end{equation*}
	is continuous. Since $X$ is compact, there exists $z \in X$ such that
	\begin{equation*}
		\int_{x \in X} e^{- \mathsf{d} (x,z) } \mathrm{d} \mu = \sup_{y \in X} \int_{x \in X} e^{- \mathsf{d} (x,y) } \mathrm{d} \mu .
	\end{equation*}
	We now find $c:= \int_X e^{- \mathsf{d} (x,z) } \mathrm{d} \mu (x) < 1$, since otherwise $e^{- \mathsf{d} (x,z) } \equiv 1$ for any $x \in \mathrm{supp} (\mu )$, which is a contradiction (note that the support of $\mu$ is not a singleton set). We then find, for any $n \ge 1$, that
	\begin{equation*}
		\int_{X^{n+1}}  e^{-\sum_{k=1}^n \mathsf{d} (x_{k-1}, x_k)} \mathrm{d} \mu^{n+1} \le c^{n}
	\end{equation*}
	which ensures the convergence of the infinite sum.
\end{proof}

The theorem above also works for a measure with $\mu (X) \le 1$. When $\mu(X) >1$, there are two options to get the convergence: one is to re-scale $\mu$ to make it a probability measure, and the other is to re-scale the metric $\mathsf{d}$. Re-scaling the measure turns out to change the magnitude in a highly non-trivial manner (see Proposition \ref{ppscmdc} or Remark \ref{rmscmtvf}). We discuss here the following convergence result when we re-scale the metric.

\begin{theorem} \label{ppexmgpmlt}
	Suppose that $(X,\mathsf{d})$ is a compact geodesic metric space and $\mu$ is a Radon measure which puts no mass on non-proper chains, and that for any $x \in X$ there exists a neighbourhood $N_x$ such that $\mu (N_x) <1$. Then, there exists $t_0 >0$ such that $\mathrm{Mag} (X, t \mathsf{d}, \mu , \Gamma^{\mathrm{triv}})$ exists as a real number for all $t \ge t_0$.
\end{theorem}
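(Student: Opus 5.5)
The plan is to reproduce the argument of Theorem \ref{ppexmgpm} with the metric $t\mathsf{d}$ in place of $\mathsf{d}$, and to use the local smallness of $\mu$ to make the constant $c$ smaller than $1$ once $t$ is large. By Theorem \ref{thgmsnmnpc}, for every $t>0$ the partial magnitude $\mathrm{Mag}(X,t\mathsf{d},\mu,\Gamma^{\mathrm{triv}};N)$ equals $\mu(X)+\sum_{n=1}^N(-1)^n\int_{X^{n+1}}e^{-t\sum_k \mathsf{d}(x_{k-1},x_k)}\mathrm{d}\mu^{n+1}$. Note that $\mu(X)<\infty$, since $X$ is compact and $\mu$ is Radon. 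Set
\begin{equation*}
c_t := \sup_{y\in X}\int_{x\in X} e^{-t\mathsf{d}(x,y)}\,\mathrm{d}\mu(x).
\end{equation*}
Integrating successively in $x_0, x_1, \dots$ (Fubini) gives
\begin{equation*}
\int_{X^{n+1}}e^{-t\sum_{k=1}^n\mathsf{d}(x_{k-1},x_k)}\mathrm{d}\mu^{n+1}\le \mu(X)\,c_t^{\,n}.
\end{equation*}
Hence it suffices to find $t_0$ with $c_t<1$ for all $t\ge t_0$; the series then converges absolutely.

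The key step, and the main obstacle, is a bound on $c_t$ that is uniform in $y$. First, for each $x\in X$ choose an open ball $B(x,2r_x)\subset N_x$, so that $\mu(B(x,2r_x))<1$. By compactness, finitely many balls $B(x_i,r_{x_i})$ cover $X$. Let $r:=\min_i r_{x_i}>0$ and $m:=\max_i \mu(B(x_i,2r_{x_i}))<1$.

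Now fix $y\in X$. There is an index $i$ with $y\in B(x_i,r_{x_i})$, and then $B(y,r)\subset B(x_i,2r_{x_i})$, so $\mu(B(y,r))\le m$. Splitting the integral over $B(y,r)$ and its complement gives
\begin{equation*}
\int_X e^{-t\mathsf{d}(x,y)}\mathrm{d}\mu(x)\le \mu(B(y,r)) + e^{-tr}\mu(X)\le m+e^{-tr}\mu(X).
\end{equation*}
This bound is independent of $y$. Choosing $t_0$ with $e^{-t_0 r}\mu(X)<1-m$ yields $c_t<1$ for all $t\ge t_0$, since $c_t$ is decreasing in $t$.

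Finally, the partial sums $\mathrm{Mag}(X,t\mathsf{d},\mu,\Gamma^{\mathrm{triv}};N)$ form a Cauchy sequence in $N$, dominated by the geometric series $\mu(X)\sum_n c_t^n$. Their limit therefore exists as a real number. The assumption that $\mu$ puts no mass on non-proper chains is used only to invoke Theorem \ref{thgmsnmnpc}, which replaces integration over $P_n(X)$ by integration over $X^{n+1}$.
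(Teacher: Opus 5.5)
Your proof is correct and uses the same core estimate as the paper. Both reduce to showing $\sup_{y}\int_X e^{-t\mathsf{d}(x,y)}\,\mathrm{d}\mu<1$, then split the integral into a small ball (controlled by the local-mass hypothesis) and its complement (controlled by $e^{-tr}\mu(X)$).

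The difference is in how compactness is used. The paper argues by contradiction. Assuming the bound fails at $t=j$, it takes maximising points $y_j\to y_\infty$ and places a single ball $B_r(y_\infty)\subset N_{y_\infty}$. It then controls the complement via $\mathsf{d}(x,y_j)\ge\mathsf{d}(x,y_\infty)-\mathsf{d}(y_j,y_\infty)>r/2$. You instead take a finite subcover by balls $B(x_i,r_{x_i})$ with $B(x_i,2r_{x_i})\subset N_{x_i}$. In Lebesgue-number style, this gives one radius $r$ and one mass bound $m<1$ that hold for every ball $B(y,r)$.

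Your route buys two things:
\begin{itemize}
\item It is direct and gives an explicit threshold: any $t_0$ with $e^{-t_0r}\mu(X)<1-m$ works.
\item Because it works with the supremum, it never needs the maximum to be attained, so it needs no appeal to continuity of $y\mapsto\int_X e^{-t\mathsf{d}(x,y)}\,\mathrm{d}\mu$.
\end{itemize}
The paper's version is a little shorter but non-constructive.

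You also correctly keep the factor $\mu(X)$ in the term bound $\mu(X)c_t^{\,n}$. The paper's ``as before'' leaves this implicit, since Theorem~\ref{ppexmgpm} was stated for probability measures.
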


The assumption for the measure is satisfied for a wide range of examples, e.g.~when $(X, \mathsf{d})$ is a complete Riemannian manifold and $\mu$ is absolutely continuous with respect to the Lebesgue measure.

\begin{proof}
	It suffices to show, as before, that there exists $t_0 > 0$ such that
	\begin{equation*}
		\max_{y \in X} \int_{x \in X} e^{- t \mathsf{d} (x,y) } \mathrm{d} \mu  < 1 
	\end{equation*}
	holds for all $t \ge t_0$, where we note that the maximum exists by compactness and continuity as before. Assuming otherwise for contradiction, we get a sequence $\{ y_j \}_{j=1}^{\infty}$ in $X$ such that 
	\begin{equation*}
		\int_{x \in X} e^{- j \mathsf{d} (x,y_j) } \mathrm{d} \mu  \ge 1
	\end{equation*}
	holds for all $j \in \mathbb{Z}_{>0}$. Since $(X, \mathsf{d})$ is compact, we may pass to a convergent subsequence, still denoted by $\{ y_j \}_{j=1}^{\infty}$, which converges to $y_{\infty} \in X$. We then pick a neighbourhood $N_{\infty}$ of $y_{\infty}$, which we may assume contains an open ball $B_r (y_{\infty})$ of radius $r>0$, such that $\mu (N_{\infty}) <1$. We then get
	\begin{equation} \label{eqpflbjdv}
		\int_{x \in N_{\infty}} e^{- j \mathsf{d} (x,y_j) } \mathrm{d} \mu  + \int_{x \in X \setminus N_{\infty}} e^{- j ( \mathsf{d} (x,y_{\infty}) - \mathsf{d}(y_j , y_{\infty}))} \mathrm{d} \mu  \ge 1
	\end{equation}
	from the triangle inequality. The first term can be bounded by
	\begin{equation*}
		\int_{x \in N_{\infty}} e^{- j \mathsf{d} (x,y_j) } \mathrm{d} \mu  \le \mu (N_{\infty}) <1
	\end{equation*}
	uniformly for any $j$, which follows from our hypothesis on $\mu$. To evaluate the second term, we pick $j_0 \in \mathbb{Z}_{>0}$ so that $\mathsf{d}(y_j , y_{\infty}) < r/2$ holds for all $j \ge j_{0}$. We then have
	\begin{equation*}
		\int_{x \in X \setminus N_{\infty}} e^{- j ( \mathsf{d} (x,y_{\infty}) - \mathsf{d}(y_j , y_{\infty}))} \mathrm{d} \mu  < e^{-jr/2} \mu(X) \to 0
	\end{equation*}
	as $j \to \infty$, which contradicts (\ref{eqpflbjdv}).
\end{proof}

The magnitude is continuous with respect to a weakly convergent sequence of measures under some assumptions, as below.

\begin{proposition} \label{thwcvpmmj}
	Let $\{ \mu_j \}_{j=1}^{\infty}$ be a sequence of Radon probability measures on a compact geodesic metric space $(X,\mathsf{d})$, converging weakly to a Radon probability measure $\mu_{\infty}$. Suppose that all these measures put no mass on non-proper chains and that the support is not a singleton set for any of $\{ \mu_j \}_{j=1}^{\infty} \cup \{ \mu_{\infty} \}$. Then we have
	\begin{equation*}
		\mathrm{Mag} (X, \mathsf{d}, \mu_{\infty} , \Gamma^{\mathrm{triv}}) = \lim_{j \to \infty} \mathrm{Mag} (X, \mathsf{d}, \mu_j , \Gamma^{\mathrm{triv}}).
	\end{equation*}
\end{proposition}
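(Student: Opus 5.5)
By Theorem \ref{thgmsnmnpc}, since every measure in $\{ \mu_j \}_{j=1}^{\infty} \cup \{ \mu_{\infty} \}$ puts no mass on non-proper chains, I would write for each $\nu \in \{ \mu_j \} \cup \{ \mu_\infty \}$
\begin{equation*}
	\mathrm{Mag}(X, \mathsf{d}, \nu , \Gamma^{\mathrm{triv}};N) = 1 + \sum_{n=1}^{N} (-1)^n \int_{X^{n+1}} f_n \, \mathrm{d} \nu^{n+1}, \qquad f_n(\bm{x}) := e^{-\sum_{k=1}^n \mathsf{d}(x_{k-1}, x_k)} .
\end{equation*}
Each $f_n$ is continuous on the compact space $X^{n+1}$. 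By Theorem \ref{ppexmgpm}, each $(\nu,\Gamma^{\mathrm{triv}})$-magnitude exists as the limit of these partial sums. The proof is then an interchange of the limits $j \to \infty$ and $N \to \infty$. I would justify it by (a) termwise convergence and (b) a tail bound that is uniform in $j$.

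For (a), I would use the standard fact that if $\mu_j \to \mu_\infty$ weakly on a compact (hence separable) metric space, then $\mu_j^{n+1} \to \mu_\infty^{n+1}$ weakly on $X^{n+1}$. One route is to check it on products $g_0(x_0)\cdots g_n(x_n)$ of continuous functions, which is immediate. These products span a dense subalgebra of $C(X^{n+1})$ by Stone--Weierstrass, and all the measures are probability measures, so uniform approximation extends the convergence to every continuous function. Hence $\int f_n \, \mathrm{d}\mu_j^{n+1} \to \int f_n \, \mathrm{d}\mu_\infty^{n+1}$ for each fixed $n$.

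For (b), as in the proof of Theorem \ref{ppexmgpm}, set $F_\nu(y) := \int_X e^{-\mathsf{d}(x,y)} \, \mathrm{d}\nu(x)$ and $c_\nu := \max_y F_\nu(y)$. Integrating out the variables one at a time gives $\int f_n \, \mathrm{d}\nu^{n+1} \le c_\nu^n$. The key step, and the one I expect to be the main obstacle, is to show $\sup_{j \ge j_0} c_{\mu_j} < 1$ for some $j_0$.

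\begin{itemize}
	\item The map $y \mapsto e^{-\mathsf{d}(x,y)}$ is $1$-Lipschitz, so each $F_{\mu_j}$ is $1$-Lipschitz, because $\mu_j$ is a probability measure.
	\item Weak convergence gives $F_{\mu_j} \to F_{\mu_\infty}$ pointwise.
	\item Equicontinuity on the compact space $X$ upgrades this to uniform convergence (an Arzel\`a--Ascoli style argument), so $c_{\mu_j} \to c_{\mu_\infty}$.
	\item By the argument in Theorem \ref{ppexmgpm}, $c_{\mu_\infty} < 1$, because the support of $\mu_\infty$ is not a singleton.
\end{itemize}

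Hence there are $c<1$ and $j_0$ with $c_{\mu_j} \le c$ for all $j \ge j_0$, and also $c_{\mu_\infty} \le c$ after enlarging $c$.

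To conclude, given $\varepsilon>0$ I would choose $N$ with $\sum_{n>N} c^n < \varepsilon$. Then the full magnitude differs from the $N$-th partial magnitude by less than $\varepsilon$, uniformly for $\mu_j$ ($j \ge j_0$) and for $\mu_\infty$. For this fixed $N$, step (a) gives convergence of the finitely many terms as $j \to \infty$. An $\varepsilon/3$ argument then yields $\lim_{j\to\infty} \mathrm{Mag}(X,\mathsf{d},\mu_j,\Gamma^{\mathrm{triv}}) = \mathrm{Mag}(X,\mathsf{d},\mu_\infty,\Gamma^{\mathrm{triv}})$.
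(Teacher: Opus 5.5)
Your proposal is correct and follows the same route as the paper: you reduce to a bound $\max_{y}\int_X e^{-\mathsf{d}(x,y)}\,\mathrm{d}\mu_k \le c<1$ that is uniform in $k$, and then interchange the limits $j\to\infty$ and $N\to\infty$. You also supply the details the paper leaves implicit, namely the weak convergence of the product measures $\mu_j^{n+1}$ via Stone--Weierstrass, and the equicontinuity argument that upgrades pointwise convergence of $F_{\mu_j}$ to uniform convergence. Getting the bound only for $j\ge j_0$, rather than for all $j$, is harmless for the limit.
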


\begin{proof}
	It suffices to show that there exists $c \in (0,1)$ such that
	\begin{equation*}
		\max_{y \in X} \int_{x \in X} e^{-  \mathsf{d} (x,y) } \mathrm{d} \mu_k  \le c <1
	\end{equation*}
	holds uniformly for all $k \in \mathbb{Z}_{>0} \cup \{ \infty \}$, but this follows from the weak convergence of measures and the assumptions of the proposition.
\end{proof}

\begin{proposition} \label{thwcvmj}
	Let $\{ \mu_j \}_{j=1}^{\infty}$ be a sequence of Radon measures on a compact geodesic metric space $(X,\mathsf{d})$, converging weakly to a Radon measure $\mu_{\infty}$. Suppose that all these measures put no mass on non-proper chains, and that for each $x \in X$ there exists a neighbourhood $N_{x}$ such that $\mu_k (N_{x}) < 1$ holds uniformly for all $k \in \mathbb{Z}_{>0} \cup \{ \infty \}$. Then there exists $t_0 >0$ such that 
	\begin{equation*}
		\mathrm{Mag} (X, t \mathsf{d}, \mu_{\infty} , \Gamma^{\mathrm{triv}}) = \lim_{j \to \infty} \mathrm{Mag} (X, t \mathsf{d}, \mu_j , \Gamma^{\mathrm{triv}})
	\end{equation*}
	holds for all $t \ge t_0$.
\end{proposition}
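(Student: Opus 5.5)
The plan follows the proofs of Theorem \ref{ppexmgpmlt} and Proposition \ref{thwcvpmmj}. It has three steps: find a single $t_0$ and a single constant $c<1$ that control all the measures at once, use these to dominate the tails of the series uniformly in $j$, and then show that each finite partial sum converges. Throughout we use Theorem \ref{thgmsnmnpc} to write each partial magnitude as $\mu_k(X)+\sum_{n=1}^N (-1)^n \int_{X^{n+1}} e^{-t\sum_k \mathsf{d}(x_{k-1},x_k)}\,\mathrm{d}\mu_k^{n+1}$. This is possible because every measure involved puts no mass on non-proper chains.

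\emph{Uniform contraction.} We aim to find $t_0>0$ and $c\in(0,1)$ such that
\begin{equation*}
\sup_{y\in X}\int_X e^{-t\mathsf{d}(x,y)}\,\mathrm{d}\mu_k(x)\le c
\end{equation*}
holds for all $t\ge t_0$ and all $k\in\mathbb{Z}_{>0}\cup\{\infty\}$.

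First note that weak convergence gives $\mu_j(X)\to\mu_\infty(X)$, so $M:=\sup_k\mu_k(X)<\infty$.

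Next, cover the compact space $X$ by finitely many open balls $B_{r_i/2}(x_i)$, where each $B_{r_i}(x_i)$ is contained in the neighbourhood $N_{x_i}$ from the hypothesis. Put $r=\min_i r_i$. By the uniform hypothesis, $\max_{i,k}\mu_k(N_{x_i})$ is a maximum of finitely many uniform bounds, so it is less than $1$. Fix $m$ with $\max_{i,k}\mu_k(N_{x_i})<m<1$.

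Now take any $y\in X$ and choose $i$ with $y\in B_{r_i/2}(x_i)$. Split the integral over $N_{x_i}$ and its complement, as in (\ref{eqpflbjdv}). On $N_{x_i}$ the integral is at most $\mu_k(N_{x_i})<m$. On the complement we have $\mathsf{d}(x,y)\ge r_i/2\ge r/2$, so that integral is at most $e^{-tr/2}M$. Choosing $t_0$ so that $m+e^{-t_0r/2}M=:c<1$ gives the claim. This argument is direct, and the contradiction argument used in Theorem \ref{ppexmgpmlt} is not needed. The one point needing care is that the uniformity in $k$ comes from the hypothesis together with finiteness of the cover. Compactness enters through the finite cover.

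\emph{Uniform tail bound.} Integrate iteratively in $x_0,x_1,\dots$, as in the proof of Theorem \ref{ppexmgpm}. This shows that the $n$-th term for each $\mu_k$ is bounded in absolute value by $Mc^n$. So the series converge absolutely, uniformly in $k$, for every $t\ge t_0$. Given $\varepsilon>0$, fix $N$ with $\sum_{n>N}Mc^n<\varepsilon$. It then suffices to prove $\mathrm{Mag}(X,t\mathsf{d},\mu_j;N)\to\mathrm{Mag}(X,t\mathsf{d},\mu_\infty;N)$ for this fixed $N$.

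\emph{Finite-term convergence.} For each $n$, the integrand $e^{-t\sum_k\mathsf{d}(x_{k-1},x_k)}$ is a bounded continuous function on the compact space $X^{n+1}$. On compact (hence separable) metric spaces, weak convergence $\mu_j\to\mu_\infty$ implies weak convergence $\mu_j^{n+1}\to\mu_\infty^{n+1}$ of the product measures. One can see this by approximating continuous functions on $X^{n+1}$ uniformly by finite sums of products $f_0(x_0)\cdots f_n(x_n)$ (Stone--Weierstrass), using that the total masses are uniformly bounded. Hence each of the finitely many terms converges, and $\mu_j(X)\to\mu_\infty(X)$ as well. Combining this with the tail bound and letting $\varepsilon\to0$ gives the result.

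The main obstacle is the uniform contraction step. The rest mirrors the proofs of Theorem \ref{ppexmgpm} and Proposition \ref{thwcvpmmj}.
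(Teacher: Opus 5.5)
Your proof is correct and follows the paper's route. Both arguments reduce to a bound $\sup_y\int_X e^{-t\mathsf{d}(x,y)}\,\mathrm{d}\mu_k\le c<1$ that is uniform in $k$ for all $t\ge t_0$, and then exchange the limits in $j$ and $N$. The differences are that you get this bound directly from a finite subcover rather than from the sequential-compactness contradiction in Theorem~\ref{ppexmgpmlt}, and that you spell out the tail estimate $Mc^n$ and the weak convergence of the products $\mu_j^{n+1}$, which the paper leaves implicit.
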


\begin{proof}
	Again it suffices to show the uniform convergence, which follows from the argument in the proof of Theorem \ref{ppexmgpmlt} and Proposition \ref{thwcvpmmj}.
\end{proof}

\subsection{Approximation by measures with finite support and Fekete configurations}

The classical magnitude (\ref{eqdfclmg}) for an infinite metric space $(X, \mathsf{d})$ is defined by taking the supremum over all finite subsets. By using the counting measure $\mu_{Z , \sharp}$ on a finite subset $Z \subset X$, this quantity can also be written as
	\begin{equation*}
		\sup_{Z \subset X , \; \text{finite}} \mathrm{Mag} (X,\mathsf{d},\mu_{Z, \sharp}, \Gamma^{\mathrm{triv}}),
	\end{equation*}
by Lemma \ref{lmfsbepm} and Theorem \ref{lmcvfmsm}, if the series converges for all $Z$.

This point of view provides a new perspective on the relationship between our definition of magnitude $\mathrm{Mag} (X,\mathsf{d},\mu, \Gamma^{\mathrm{triv}})$ and the classical magnitude for finite metric spaces, in terms of approximating the measure $\mu$ by ones supported on finitely many points. For example, approximating a probability measure by a sequence of measures of finite support is a well-studied problem called the quantisation of measures (see e.g.~\cite{GL00}), with a well-established algorithm for finding the finite points called Lloyd's algorithm \cite[Chapter 1, \S 4]{GL00}.

For such approximations, it is natural to consider what happens when we use the empirical measure, in place of the counting measure, which is defined as an average of delta measures supported on a finite subset $Z \subset X$ as
\begin{equation*}
	\mu_{Z, \mathrm{em}} := \frac{1}{|Z|} \sum_{z \in Z}  \delta_{z}.
\end{equation*}
A merit of working with the empirical measure is the following compactness result: if $(X, \mathsf{d})$ is separable, Prokhorov's theorem implies that there exists a subsequence in the family $\{ \mu_{Z, \mathrm{em}} \}_{Z \subset X, |Z| < \infty}$ converging weakly to a measure $\mu_{\infty}$ on $X$, which is a probability measure if the collection $\{ \mu_{Z, \mathrm{em}} \}_{Z \subset X, |Z| < \infty}$ is tight. Note that the scaling by $1/|Z|$ is crucial in this argument to have a uniform bound on the total volume. Thus, we find, up to taking a subsequence,
	\begin{equation} \label{eqcvgpkr}
	\mathrm{Mag} (X,\mathsf{d},\mu_{\infty} ;N) = \lim_{|Z| \to \infty} \mathrm{Mag} (X,\mathsf{d},\mu_{Z, \mathrm{em}} ;N)
	\end{equation}
	for all $N \in \mathbb{Z}_{>0}$, by the weak convergence. The convergence $N \to \infty$ is more subtle as it involves exchanging the limits, but possible e.g.~when the assumptions for Propositions \ref{thwcvpmmj} and \ref{thwcvmj} hold.

The empirical measure differs from the counting measure $\mu_{Z, \sharp}$ (in Lemma \ref{lmfsbepm}) only by an overall scaling. We start with the following observation, to see how the scaling affects the magnitude.

\begin{proposition} \label{ppscmdc}
	Let $c>0$ be a constant and let $(Z,\mathsf{d} , \mu_{\sharp})$ be a finite metric space with the counting measure. Let $\mathsf{d}_c$ be a metric on $Z$ defined by
	\begin{equation*}
		\mathsf{d}_c (x,y) := \begin{cases}
			0 &\quad (x=y) \\
			\mathsf{d} (x,y) +c &\quad (x \neq y).
		\end{cases}
	\end{equation*}
	Then for any $N \in \mathbb{Z}_{>0}$ and for any measure $\Gamma$ on the set of piecewise geodesics, we have
	\begin{equation*}
		\mathrm{Mag} (Z,\mathsf{d}, e^{-c} \mu_{\sharp}, \Gamma ; N) = e^{-c} \mathrm{Mag} (Z,\mathsf{d}_{c}, \mu_{\sharp}, \Gamma ; N).
	\end{equation*}
\end{proposition}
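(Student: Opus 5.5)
We compare the two partial magnitudes term by term in the defining sum of Definition \ref{dfnptmg}. The order-$0$ terms agree: $(e^{-c}\mu_{\sharp})(Z) = e^{-c}|Z| = e^{-c}\mu_{\sharp}(Z)$. For each $n \ge 1$ it then suffices to prove
\begin{equation*}
\int_{P_n(Z)} \left( \int_{\Omega_{\bm{x}}} e^{-\mathrm{Len}(\gamma)} \mathrm{d}\Gamma_{\bm{x}} \right) \mathrm{d}(e^{-c}\mu_{\sharp})^{n+1} = e^{-c} \int_{P_n(Z)} \left( \int_{\Omega_{\bm{x}}} e^{-\mathrm{Len}_c(\gamma)} \mathrm{d}\Gamma_{\bm{x}} \right) \mathrm{d}\mu_{\sharp}^{n+1},
\end{equation*}
where $\mathrm{Len}_c$ denotes length with respect to $\mathsf{d}_c$. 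Summing over $n \le N$ with the signs $(-1)^n$ then gives the claim.

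First, the combinatorial data are the same for both metrics. The set $P_n(Z)$ does not depend on the metric, since it only involves the condition $x_{k-1} \neq x_k$. Under the convention of \S\ref{scfms}, the space $\Omega_{\bm{x}}$ is the same singleton for $\mathsf{d}$ and $\mathsf{d}_c$. Hence the factor $\prod_k \Gamma_{x_{k-1},x_k}(\Omega_{x_{k-1},x_k})$ in (\ref{eqdfomgmk}) is identical on both sides for any choice of $\Gamma$, provided $\Gamma$ is the same abstract family indexed by the chains. If one prefers not to use this convention, one should simply state this as the meaning of "the same $\Gamma$"; I expect this to be the only point requiring care.

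Second, compare the integrands. For $\bm{x} \in P_n(Z)$ every consecutive pair is distinct, so
\begin{equation*}
\sum_{k=1}^n \mathsf{d}_c(x_{k-1},x_k) = \sum_{k=1}^n \mathsf{d}(x_{k-1},x_k) + nc,
\end{equation*}
and therefore $e^{-\mathrm{Len}_c(\gamma)} = e^{-nc} e^{-\mathrm{Len}(\gamma)}$. Third, compare the measures: $(e^{-c}\mu_{\sharp})^{n+1} = e^{-(n+1)c}\mu_{\sharp}^{n+1}$. The left-hand side is therefore $e^{-(n+1)c}$ times the $\mu_{\sharp}$-integral of $e^{-\mathrm{Len}}\,\Gamma_{\bm{x}}(\Omega_{\bm{x}})$, while the right-hand side is $e^{-c}e^{-nc}$ times the same quantity. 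The two sides agree.

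Integrability is automatic because $Z$ is finite, so all the integrals are finite sums and the partial magnitudes are well defined. I also need to check that $\mathsf{d}_c$ is a metric: the triangle inequality holds because adding $c$ to the two sides on the right is at least adding $c$ once on the left. The main obstacle is therefore only the bookkeeping of the geodesic convention, not any analytic estimate.
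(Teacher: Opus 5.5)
Your proposal is correct and follows the paper's proof: the paper's one-line argument also factors $(e^{-c}\mu_{\sharp})^{n+1} = e^{-(n+1)c}\mu_{\sharp}^{n+1}$, absorbs $e^{-nc}$ into the $\mathsf{d}_c$-length on proper chains, and leaves the factor $\prod_k \Gamma_{x_{k-1},x_k}(\Omega_{x_{k-1},x_k})$ unchanged, for each $n=1,\dots,N$. Your additional checks (the $n=0$ term, the triangle inequality for $\mathsf{d}_c$, and the point that $\Gamma$ must be the same family of measures for both metrics) are details the paper leaves implicit.
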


\begin{proof}
	The claim follows from
	\begin{align*}
		&\int_{\bm{x} \in P_n (X)}  \left(  \int_{\gamma \in \Omega_{\bm{x}}} e^{- \mathrm{Len} (\gamma) }  \mathrm{d} \Gamma_{\bm{x}} \right) \mathrm{d} (e^{-c}\mu_{\sharp})^{n+1} \\
		&= e^{-c} \int_{\bm{x} \in P_n (X)}  e^{- \sum_{k=1}^n (d(x_{k-1}, x_k)+c)} \left( \prod_{k=0}^n \int_{\gamma_k \in \Omega_{x_{k-1}, x_k}} \mathrm{d}\Gamma_{x_{k-1}, x_k} \right) \mathrm{d} \mu_{\sharp}^{n+1}
	\end{align*}
	for any $n=1 , \dots , N$.
\end{proof}

The computation above holds for general metric measure spaces, but we restricted to finite metric spaces since $\mathsf{d}_c$ induces the discrete topology on $Z$ irrespectively of the one on $(Z, \mathsf{d})$. Since the measure needs to be a Borel measure for both $\mathsf{d}$ and $\mathsf{d}_c$, it is natural to restrict to a discrete space.

\begin{example}
	By Proposition \ref{ppscmdc}, the limit in (\ref{eqcvgpkr}) can also be written as
	\begin{equation*}
		\mathrm{Mag} (X,\mathsf{d},\mu_{\infty} ;N) = \lim_{|Z| \to \infty} \frac{1}{|Z|} \mathrm{Mag} (X,\mathsf{d}_{\log |Z|},\mu_{Z , \sharp} ;N).
	\end{equation*}
\end{example}

Let $X$ be a compact complex manifold and $K$ be a regular non-pluripolar compact subset of $X$ (e.g.~smoothly bounded domain in $\mathbb{C}^n$, for the case when $X = \mathbb{CP}^n$, see \cite[Definition 3.3, Theorem 3.4]{BB10}), endowed with a continuous plurisubharmonic weight $\phi_{\mathrm{psh}}$ and a Riemannian volume form defined by a hermitian metric on $X$. Then, a theorem due to Berman--Boucksom--Witt Nystr\"om \cite[Theorem A]{BBW} states that the empirical measure associated a configuration of $m$ points in $K$ called a Fekete configuration converges to the equilibrium measure on $K$ (see \cite[\S 1.2]{BB10}). The Fekete configuration maximises an energy function which can be regarded as a generalisation of the Coulomb repulsion force, and hence they tend to be evenly spread (see e.g.~\cite[Remarque 2.8]{Duj20}). 

\begin{theorem} \label{thmfkeqm}
	Let $K$ be a compact non-pluripolar subset in a compact complex manifold $X$ with a plurisubharmonic weight $\phi_{\mathrm{psh}}$, and $\mathsf{d}$ be a distance on $K$ induced from a hermitian metric on $X$. Let $\mu_m$ be the empirical measure of the Fekete configuration $\mathcal{F}_m \subset K$ with respect to $\phi_{\mathrm{psh}}$, where $|\mathcal{F}_m|=m$. Then we have
	\begin{equation*}
		\mathrm{Mag} (K, \mathsf{d}, \mu_{\mathrm{eq}} , \Gamma^{\mathrm{triv}})  = \lim_{m \to \infty} \frac{1}{m}  \mathrm{Mag} ( \mathcal{F}_m , \mathsf{d}_{\log m} )
	\end{equation*}
	where $\mu_{\mathrm{eq}}$ is the equilibrium measure of $K$ and $\mathsf{d}_{\log m}$ is the metric on $\mathcal{F}_m$ as defined in Proposition \ref{ppscmdc}.
\end{theorem}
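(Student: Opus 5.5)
The plan is to combine three ingredients: the Berman--Boucksom--Witt Nystr\"om convergence $\mu_m \to \mu_{\mathrm{eq}}$, the scaling identity of Proposition \ref{ppscmdc}, and a uniform geometric bound that lets the limits $m \to \infty$ and $N \to \infty$ be exchanged. That uniform bound is what Proposition \ref{thwcvpmmj} needs.

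\textbf{Step 1: rewrite the right-hand side.} Since $\mu_m = \frac{1}{m}\mu_{\mathcal{F}_m,\sharp}$, Proposition \ref{ppscmdc} with $c = \log m$ gives, for each $N$,
\begin{equation*}
\mathrm{Mag}(\mathcal{F}_m, \mathsf{d}, \mu_m, \Gamma^{\mathrm{triv}}; N) = \frac{1}{m}\mathrm{Mag}(\mathcal{F}_m, \mathsf{d}_{\log m}, \mu_{\sharp}, \Gamma^{\mathrm{triv}}; N).
\end{equation*}
By Lemma \ref{lmfsbepm}, the left side equals $\mathrm{Mag}(K, \mathsf{d}, \mu_m, \Gamma^{\mathrm{triv}}; N)$. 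Here we view $K$ with the restricted metric, or pass to $X$ if geodesicity is needed.

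\textbf{Step 2: identify the classical magnitude.} For the discrete space $(\mathcal{F}_m, \mathsf{d}_{\log m})$, Theorem \ref{lmcvfmsm}(2) identifies the $N \to \infty$ limit with the classical magnitude, provided the Neumann series converges. Since $\mathsf{d}_{\log m} \ge \log m$ off the diagonal, the row sums satisfy
\begin{equation*}
\sum_{x \ne y} e^{-\mathsf{d}(x,y) - \log m} = \int_{x \ne y} e^{-\mathsf{d}(x,y)}\, d\mu_m(x) < 1,
\end{equation*}
because $\mu_m(K \setminus \{y\}) \le (m-1)/m$. This gives $\Vert Y \Vert_{\mathrm{op}} < 1$ by (\ref{eqprfrsrd}). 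So for each $m$ the rescaled classical magnitude equals $\lim_N$ of the partial sums, that is, $\mathrm{Mag}(K, \mathsf{d}, \mu_m, \Gamma^{\mathrm{triv}})$ computed over proper chains.

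\textbf{Step 3: pass to the limit.} By \cite[Theorem A]{BBW}, $\mu_m \to \mu_{\mathrm{eq}}$ weakly. I would then invoke Proposition \ref{thwcvpmmj}, or rather its proof: it suffices to find $c < 1$ with
\begin{equation*}
\sup_y \int_{x \ne y} e^{-\mathsf{d}(x,y)}\, d\mu_k \le c \quad \text{uniformly in } k \in \mathbb{Z}_{>0} \cup \{\infty\}.
\end{equation*}
Then the $n$-th terms are dominated by $c^n$ uniformly, each fixed-$n$ term converges, and dominated convergence for series gives the result.

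\textbf{Main obstacle: the hypotheses of Proposition \ref{thwcvpmmj}.} Empirical measures do charge the diagonal, so they fail to put no mass on non-proper chains. The fix is to work directly with integrals over $P_n$, as the partial magnitude definition does, rather than over $X^{n+1}$. Convergence of the fixed-$n$ terms then needs care. The integrand $\mathbf{1}_{P_n}\, e^{-\sum_k \mathsf{d}(x_{k-1}, x_k)}$ is discontinuous exactly on the diagonals. Its $\mu_m^{n+1}$-mass there is $O(1/m)$, while $\mu_{\mathrm{eq}}^{n+1}$ gives the diagonals zero mass, since the equilibrium measure is nonatomic ($K$ non-pluripolar, $\mu_{\mathrm{eq}} = \mathrm{MA}$ of a continuous function). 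So, comparing with the continuous function $e^{-\sum_k \mathsf{d}}$ on the full product, the limit is unaffected.

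For the uniform $c$, I would use compactness of $K$. Continuity of $y \mapsto \int e^{-\mathsf{d}(x,y)}\, d\mu_{\mathrm{eq}}$ gives a maximum below $1$, because the support of $\mu_{\mathrm{eq}}$ is not a singleton. Weak convergence is uniform on the equicontinuous family $\{e^{-\mathsf{d}(\cdot, y)}\}_y$, and removing the diagonal only decreases the integral. Together these give $c < 1$ for all large $m$, and the finitely many remaining $m$ are handled by Step 2.
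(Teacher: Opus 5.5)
Your proposal is correct and follows essentially the same route as the paper. You use weak convergence from \cite[Theorem A]{BBW} for each fixed $N$, a uniform bound $\sup_y \int e^{-\mathsf{d}(x,y)}\,\mathrm{d}\mu_m \le c < 1$ modelled on the proof of Theorem \ref{ppexmgpm} to exchange $m \to \infty$ and $N \to \infty$, and Proposition \ref{ppscmdc}, Lemma \ref{lmfsbepm} and Theorem \ref{lmcvfmsm} to identify $\frac{1}{m}\mathrm{Mag}(\mathcal{F}_m, \mathsf{d}_{\log m})$. Your $O(n/m)$ estimate for the diagonal mass of $\mu_m^{n+1}$, and your exclusion of the diagonal so the bound holds for every $m$ (the paper's (\ref{eqpmempf}) equals $1$ at $m=1$), fill in points the paper passes over, since empirical measures do charge non-proper chains.
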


\begin{proof}
	By Berman--Boucksom--Witt Nystr\"om \cite[Theorem A]{BBW}, we easily have
	\begin{equation*}
		\mathrm{Mag} (K, \mathsf{d}, \mu_{\mathrm{eq}} , \Gamma^{\mathrm{triv}}; N) = \lim_{m \to \infty}  \mathrm{Mag} ( \mathcal{F}_m , \mathsf{d} |_{\mathcal{F}_m} , \mu_{\mathcal{F}_m, \mathrm{em}} , \Gamma^{\mathrm{triv}};N)
	\end{equation*}
	for any $N \in \mathbb{Z}_{>0}$, since $\mu_{\mathcal{F}_m, \mathrm{em}}$ converges weakly to $\mu_{\mathrm{eq}}$. Since $K$ is compact and $\mu_{\mathrm{eq}}$ is an absolutely continuous probability measure \cite[Corollary 2.12]{BB10}, the limit $N \to \infty$ on the left hand side exists by Theorem \ref{ppexmgpm}. It thus suffices to show that the convergence $N \to \infty$ is uniform for all $m$ on the right hand side, in order to exchange the limits $m \to \infty$ and $N \to \infty$. As in the proof of Proposition \ref{thwcvmj}, it suffices to show that there exists $c \in (0,1)$ such that
	\begin{equation} \label{eqpmempf}
		\max_{y \in K} \int_{x \in K} e^{-  \mathsf{d} (x,y) } \mathrm{d} \mu_{\mathcal{F}_m, \mathrm{em}}  \le c <1
	\end{equation}
	holds uniformly for all $m \in \mathbb{Z}_{>0}$, but this follows from the fact that $\mu_{\mathcal{F}_m, \mathrm{em}}$ converges weakly to $\mu_{\mathrm{eq}}$ and that $\mu_{\mathrm{eq}}$ is an absolutely continuous probability measure, by recalling the argument used in the proof of Theorem \ref{ppexmgpm}. We finally get the result by noting
	\begin{equation*}
		\mathrm{Mag} ( \mathcal{F}_m , \mathsf{d} |_{\mathcal{F}_m} , \mu_{\mathcal{F}_m, \mathrm{em}} , \Gamma^{\mathrm{triv}}) = \frac{1}{m}  \mathrm{Mag} ( \mathcal{F}_m , \mathsf{d}_{\log m} ),
	\end{equation*}
	which follows from Proposition \ref{ppscmdc}, Lemma \ref{lmfsbepm}, and Theorem \ref{lmcvfmsm}, by noting that the convergence follows from (\ref{eqpmempf}) above.
\end{proof}

With the intuition of the Fekete configuration being evenly spread, it is tempting to speculate that the classical definition of magnitude (\ref{eqdfclmg}) is related to the magnitude of the Fekete configuration. While we do have a formula relating these two notions as in Corollary \ref{thmclmfkc1}, we do not pursue any general results in this paper.

\section{Riemannian manifolds} \label{scrm}

Let $(X,g)$ be an oriented Riemannian manifold. Throughout this section, the distance $\mathsf{d} = \mathsf{d}_g$ is meant to be the intrinsic metric induced from the Riemannian metric $g$. Together with a Radon measure $\mu$, it naturally defines a metric measure space $(X , \mathsf{d} , \mu )$.

\subsection{Preliminary results}

We start with the following lemma.

\begin{lemma} \label{lmacompn}
	Suppose that $\mu$ is absolutely continuous with respect to the Lebesgue measure. Then, for any $n \in \mathbb{Z}_{>0}$, $\mu$ puts no mass on non-proper chains $X^{n+1} \setminus P_n (X)$, and $\Omega_{\bm{x}}$ is a singleton set for any $\bm{x} \in P_n (X)$ outside of a set of measure zero with respect to $\mu^{n+1}$.
\end{lemma}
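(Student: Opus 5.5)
The plan is to prove the two assertions separately, reducing each to the fact that a suitable subset of $X \times X$ (or of $X^{n+1}$) has Lebesgue measure zero. Absolute continuity of $\mu$, and hence of the product measure $\mu^{n+1}$ with respect to the Lebesgue measure on $X^{n+1}$, then transfers this to the statement about $\mu$.

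\emph{No mass on non-proper chains.} The complement $X^{n+1} \setminus P_n(X)$ is the finite union over $k=1,\dots,n$ of the sets $D_k := \{ \bm{x} \in X^{n+1} \mid x_{k-1} = x_k \}$, so it suffices to show $\mu^{n+1}(D_k)=0$ for each $k$. Up to permuting coordinates, $D_k \cong \Delta \times X^{n-1}$, where $\Delta \subset X \times X$ is the diagonal. By Fubini,
\begin{equation*}
\mu^2(\Delta) = \int_{x \in X} \mu(\{x\}) \, \mathrm{d}\mu(x) = 0 ,
\end{equation*}
since points have Lebesgue measure zero and hence $\mu$-measure zero. One caveat: if $\mu(X)=\infty$, I would use $\sigma$-finiteness, which holds for a Radon measure on a manifold. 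Hence $\mu^{n+1}(D_k) = \mu^2(\Delta)\,\mu(X)^{n-1} = 0$, where $0 \cdot \infty = 0$.

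\emph{Generic uniqueness of geodesics.} We have $\Omega_{\bm{x}} = \prod_k \Omega_{x_{k-1},x_k}$, so $\Omega_{\bm{x}}$ fails to be a singleton only if some pair $(x_{k-1},x_k)$ lies in the set
\begin{equation*}
B := \{ (x,y) \in X \times X \mid x \neq y,\ |\Omega_{x,y}| \neq 1 \},
\end{equation*}
where $|\Omega_{x,y}| \neq 1$ means there is either more than one minimising geodesic or none. As before, the bad set in $X^{n+1}$ is a finite union of sets of the form $B \times X^{n-1}$, so it suffices to prove $\mu^2(B)=0$. By Fubini again, it suffices to show that for every $x \in X$ the slice $B_x := \{ y \mid (x,y) \in B\}$ is Lebesgue-null, and hence $\mu$-null. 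This also requires $B$ to be measurable; I would argue that $B$ is closed or Borel via the standard description of the cut locus, or else replace it by a Borel superset of measure zero.

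The key geometric input is the standard fact that if $y$ is joined to $x$ by two distinct minimising geodesics, then $y$ lies in the cut locus $\mathrm{Cut}(x)$, and that $\mathrm{Cut}(x)$ has Lebesgue measure zero. To justify the latter, I would use that for complete $X$ the map $\exp_x$ restricted to the star-shaped domain inside the tangent cut locus is a diffeomorphism onto $X \setminus \mathrm{Cut}(x)$. Moreover, $\mathrm{Cut}(x)$ is the image under $v \mapsto \exp_x(t_c(v) v)$ of the unit sphere, where $t_c$ is the continuous cut-time function. This is a Lipschitz-type image of an $(m-1)$-dimensional set, or alternatively one can invoke the classical result (e.g.\ in Sakai's or Chavel's book) that the cut locus is null.

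\emph{Main obstacle.} The hardest step is the case $|\Omega_{x,y}|=0$ together with completeness. The lemma does not assume $X$ is complete, so existence of minimising geodesics can fail on a set of positive measure (for instance, on a non-convex domain). I would therefore either tacitly assume completeness or compactness, as in the rest of the paper, or interpret the statement as concerning only non-uniqueness. In that case I would restrict $B$ to pairs with $|\Omega_{x,y}| \ge 2$, and use the local argument that such $y$ still lies in the image of the measure-zero tangent cut set under $\exp_x$.
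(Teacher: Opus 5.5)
Your proposal is correct and follows the paper's route. The first claim reduces to the diagonal being null: the paper says the non-proper chains form a positive-codimension submanifold, and your Fubini argument shows that only the absence of atoms is needed. The second claim is the classical fact that the cut locus has measure zero, which the paper simply cites from Gallot--Hulin--Lafontaine.

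Your completeness caveat is justified, since the paper's proof tacitly uses Hopf--Rinow. Your ``Lipschitz-type image'' justification is loose: $t_c$ is only known to be continuous without a deep regularity theorem. If you want to prove rather than cite that $\mathrm{Cut}(x)$ is null, use instead that the tangent cut locus meets each ray from the origin at most once. It is therefore null by Fubini in polar coordinates, and the smooth map $\exp_x$ sends it to a null set.
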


\begin{proof}
	The first claim is obvious since the set of non-proper chains is a proper submanifold of $X^{n+1}$. The second claim is a consequence of the well-known fact that a cut locus has measure zero (see e.g.~\cite[Lemma 3.96]{GHL04}).
\end{proof}

This lemma implies that the choice of the measure $\Gamma_{\bm{x}}$ on $\Omega_{\bm{x}}$ is irrelevant for the definition of the $(\mu , \Gamma )$-magnitude, as long as $\Gamma_{\bm{x}} ( \Omega_{\bm{x}} ) =1$ when $\Omega_{\bm{x}}$ is a singleton set.

\begin{remark}
	It is also known that, for an $\textsf{RCD}^* (K,N)$ space with $K \in \mathbb{R}$ and $N \in [1, \infty )$, any point can be connected to another by a unique geodesic almost everywhere \cite[Corollary 1.4]{GRS16}.
\end{remark}

When the Riemannian manifold $(X,g)$ is complete, the metric space $(X , \mathsf{d})$ is a geodesic metric space by the Hopf--Rinow theorem. Furthermore, the assumption of Lemma \ref{lmacompn} is satisfied when the measure $\mu$ is induced from the weighted volume form, i.e.~when there exists a smooth function $\psi$ on $X$ such that
\begin{equation*}
	\mathrm{d} \mu = e^{\psi} \mathrm{d} \mathrm{vol}_g
\end{equation*}
where $\mathrm{d} \mathrm{vol}_g$ is the volume form of $g$. Thus a simplified formula for the magnitude in Theorem \ref{thgmsnmnpc} is available in this case, so that
\begin{align*}
	\mathrm{Mag}(X, \mathsf{d}, \mu , \Gamma ;N) &= \mathrm{Mag}(X, \mathsf{d}, \mu ;N) \\
	&=\mu (X) +\sum_{n=1}^{N} (-1)^n \int_{X^{n+1}} e^{-\sum_{k=1}^n \mathsf{d}(x_{k}, x_{k+1})} \mathrm{d} \mu^{n+1}
\end{align*}
for any measure $\Gamma$ which satisfies $\Gamma_{\bm{x}} ( \Omega_{\bm{x}} ) =1$ when $\Omega_{\bm{x}}$ is a singleton set (which includes $\Gamma^{\mathrm{triv}}$).

We thus focus on the $\mu$-magnitude throughout this section. Hence computing the magnitude reduces to computing the series of integrals
\begin{equation*}
	\int_{X^{n+1}} e^{-\sum_{k=1}^n \mathsf{d}(x_{k}, x_{k+1})} \mathrm{d} \mu^{n+1}
\end{equation*}
for all $n \in \mathbb{Z}_{>0}$. In this section, we do not discuss the limit $N \to \infty$ since the main point is the computation of the above integral, for each fixed $n$. The convergence is guaranteed for the case when the results in \S \ref{sccfcgms} are available.

\begin{lemma} \label{lmdcvfdml}
Let $\mu$ be the weighted volume form on a complete Riemannian manifold $(X,g)$. The following subset
\begin{equation*}
	P_n^l (X) := \left\{ (x_0, \dots , x_n ) \in P_n (X) \; \left| \; \sum_{k=1}^n \mathsf{d} (x_{k-1}, x_k)= l \right\} \right. 
\end{equation*}
is a smooth hypersurface of $P_n (X)$ outside a set of measure zero with respect to $\mu$. Moreover there exists a volume form $\mu_l^n$ on $P_n^l (X)$ outside a set of measure zero with respect to $\mu$, such that
\begin{equation*}
	\mathrm{d} l \cdot \mathrm{d} \mu_l^n =  \mathrm{d} \mu^{n+1}.
\end{equation*}
\end{lemma}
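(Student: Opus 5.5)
We will show that the function
\begin{equation*}
	L : P_n(X) \to \mathbb{R}, \qquad L(x_0, \dots , x_n) := \sum_{k=1}^n \mathsf{d}(x_{k-1}, x_k)
\end{equation*}
is smooth with nowhere vanishing differential on an open subset of full measure. The implicit function theorem then makes each level set $P_n^l(X) = L^{-1}(l)$ a smooth hypersurface there, and the coarea formula (equivalently, a Fubini-type disintegration along the fibres of a submersion) gives the measure $\mu^n_l$.

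\textbf{Step 1 (good set).} Let $U \subset P_n(X)$ be the set of chains such that, for every $k$, the point $x_k$ does not lie in the cut locus of $x_{k-1}$; note that $x_{k-1} \neq x_k$ on $P_n(X)$. The set $\{(x,y) : y \in \mathrm{Cut}(x) \cup \{x\}\}$ is closed in $X \times X$ when $(X,g)$ is complete, since the cut time is a continuous function on the unit sphere bundle. It also has measure zero, by Fubini together with the fact that each cut locus has measure zero \cite[Lemma 3.96]{GHL04}, as already used in Lemma \ref{lmacompn}. Hence $U$ is open and $X^{n+1} \setminus U$ is $\mu^{n+1}$-null. Here we use that $\mathrm{d}\mu = e^{\psi}\mathrm{dvol}_g$ is equivalent to the Lebesgue measure.

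\textbf{Step 2 (smoothness and nonvanishing gradient).} On $U$, each function $(x_{k-1},x_k) \mapsto \mathsf{d}(x_{k-1},x_k)$ is smooth, because away from the diagonal and the cut locus the distance function is smooth. Its gradient in the variable $x_n$ is the unit vector $\dot\gamma(1)$ along the unique minimising geodesic from $x_{n-1}$ to $x_n$. Since $x_n$ appears only in the last summand, $\nabla_{x_n} L$ is a unit vector, so $\mathrm{d}L \neq 0$ everywhere on $U$. In particular $|\nabla L| \ge 1$, where the gradient is taken with respect to the product metric $g^{\oplus(n+1)}$. Consequently $P_n^l(X) \cap U$ is a smooth embedded hypersurface for every $l > 0$.

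\textbf{Step 3 (disintegration).} Write $\mathrm{d}\mu^{n+1} = e^{\sum_k \psi(x_k)}\,\mathrm{dvol}_{g^{\oplus(n+1)}}$. On $U$ we define
\begin{equation*}
	\mathrm{d}\mu^n_l := \frac{e^{\sum_k \psi(x_k)}}{|\nabla L|}\,\mathrm{d}\sigma_l ,
\end{equation*}
where $\sigma_l$ is the induced Riemannian hypersurface measure on $L^{-1}(l)$. Equivalently, $\mu^n_l$ is the interior product $\iota_{\nabla L/|\nabla L|^2}\,\mathrm{d}\mu^{n+1}$ restricted to the level set. The coarea formula then gives $\mathrm{d}l \wedge \mathrm{d}\mu^n_l = \mathrm{d}\mu^{n+1}$ on $U$, which is the claimed identity. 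Alternatively, one can verify this locally: choose coordinates in which $l$ is one coordinate, which is possible because $L$ is a submersion.

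The main obstacle is Step 1, namely justifying that the exceptional set is null in $X^{n+1}$ rather than merely fibrewise null, and that it is closed so that $U$ is an honest open manifold. I would handle the null part by Fubini applied to the measurable set $\{(x,y): y \in \mathrm{Cut}(x)\}$, and the closedness by continuity of the cut time for complete manifolds. If closedness proves delicate, it suffices to remove its closure: the closure of a null set need not be null in general, but for the cut locus this is fine because $\mathrm{Cut}$ is already closed in $X \times X$.
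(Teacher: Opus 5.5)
Your proposal is correct and follows the same route as the paper. You discard the chains where some $x_k$ lies in the cut locus of $x_{k-1}$, observe that $\sum_{k}\mathsf{d}(x_{k-1},x_k)$ is smooth with nonvanishing differential on the remaining set, apply the implicit function theorem, and obtain $\mu^n_l$ by using $l$ as a coordinate, which your coarea formula makes explicit. Beyond the paper, you justify that the good set is open (closedness of the cut-locus pairs in $X\times X$) and null-complemented in $X^{n+1}$ (Fubini), and you pin down why the derivative is nonzero (the $x_n$-gradient is a unit vector); note only that this unit vector is the tangent of the unit-speed geodesic at $x_n$, not $\dot\gamma(1)$ for the $[0,1]$-parametrisation.
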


\begin{proof}
	The function $\sum_{k=1}^n \mathsf{d} (x_{k-1}, x_k)$ is smooth on $P_n (X)$ except when $x_k$ is in the cut locus of $x_{k-1}$ for some $k=1, \dots , n$, which is of measure zero. Thus, outside this set of measure zero, $\sum_{k=1}^n \mathsf{d} (x_{k-1}, x_k)$ is smooth on $P_n (X)$ with clearly non-vanishing derivative, and hence the claim follows from the implicit function theorem. The second claim is obvious by taking $l$ as the coordinate function.
\end{proof}

The above lemma and change of variables immediately imply the following result, which can be regarded as a generalisation of (\ref{eqcmpilsh}).

\begin{theorem} \label{thmggrmmf}
	Suppose that $(X,g)$ is a complete Riemannian manifold and $\mu$ is defined by a smooth weighted volume form such that $\mu (X)$ is finite. For each $n \in \mathbb{Z}_{>0}$, we have
	\begin{equation*}
	\int_{X^{n+1}} e^{-\sum_{k=1}^n \mathsf{d}(x_{k}, x_{k+1})} \mathrm{d} \mu^{n+1} = \int_0^{\infty} e^{-l } \mu_l^n (P_n^l(X) ) \mathrm{d} l ,
	\end{equation*}
	where $\mu_l^n (P_n^l(X) ) := \int_{P^l_n (X)} \mathrm{d} \mu_l^n$ by using the volume form in Lemma \ref{lmdcvfdml}. In particular, the $N$-th partial $\mu$-magnitude can be written as
	\begin{equation*}
		\mathrm{Mag} (X,\mathsf{d},\mu; N) = \mu (X) + \sum_{n=1}^{N} (-1)^n \int_0^{\infty} e^{-l } \mu_l^n (P_{n}^{l} (X) ) \mathrm{d} l 
	\end{equation*}
	for all $N \in \mathbb{Z}_{>0}$.
\end{theorem}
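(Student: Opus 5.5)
The plan is a coarea-type change of variables based on Lemma \ref{lmdcvfdml}. Fix $n \in \mathbb{Z}_{>0}$ and set $L(\bm{x}) := \sum_{k=1}^n \mathsf{d}(x_{k-1},x_k)$ on $X^{n+1}$. By Lemma \ref{lmacompn}, $\mu^{n+1}$ puts no mass on $X^{n+1}\setminus P_n(X)$. The same holds for the set $C$ of chains in which some $x_k$ lies in the cut locus of $x_{k-1}$: each fibre of $C$ over the remaining coordinates is a finite union of cut loci, which have measure zero, so Fubini gives $\mu^{n+1}(C)=0$. Hence the integral over $X^{n+1}$ equals the integral over the open set $U := P_n(X)\setminus \overline{C}$, or more carefully over a full-measure open subset on which $L$ is smooth. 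The integrand $e^{-L}$ is positive and bounded by $1$, and $\mu(X)<\infty$, so all integrals are finite and Tonelli applies freely.

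On $U$, the function $L$ is smooth with nowhere-vanishing differential. Moving $x_n$ along the unit-speed geodesic away from $x_{n-1}$ changes $L$ at rate $1$, so $L$ is a submersion. This is exactly the setting of Lemma \ref{lmdcvfdml}: the level sets $P_n^l(X)\cap U$ are smooth hypersurfaces, and there is a volume form $\mu_l^n$ on them with $\mathrm{d}l\cdot \mathrm{d}\mu_l^n = \mathrm{d}\mu^{n+1}$. Concretely, I would take $\mu^n_l$ to be the interior product of the smooth density $e^{\sum_k \psi(x_k)}\mathrm{dvol}_{g^{n+1}}$ with $\nabla L/|\nabla L|^2$, restricted to the level set. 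By the smooth coarea formula (equivalently, locally using $l$ as one coordinate of a chart adapted to the submersion and applying Fubini), any nonnegative measurable $f$ satisfies
\begin{equation*}
\int_U f\, \mathrm{d}\mu^{n+1} = \int_0^\infty \left( \int_{P_n^l(X)\cap U} f \, \mathrm{d}\mu^n_l \right) \mathrm{d}l .
\end{equation*}
Applying this with $f = e^{-L}$, which is constant equal to $e^{-l}$ on each level set, gives $\int_0^\infty e^{-l}\mu^n_l(P^l_n(X))\,\mathrm{d}l$. The range is $l \in (0,\infty)$ because $L>0$ on proper chains.

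The formula for $\mathrm{Mag}(X,\mathsf{d},\mu;N)$ then follows by substituting this identity term by term into the finite sum of Definition \ref{dfnptmmgv}. The only step I expect to need real care is justifying that the exceptional set (non-proper chains together with cut-locus configurations) is $\mu^{n+1}$-null and that discarding it does not affect $\mu^n_l(P^l_n(X))$. I would handle this by defining $\mu^n_l(P^l_n(X))$ as the measure of the regular part of the level set. Coarea applied to the indicator of the exceptional set shows that it meets almost every level set in a $\mu^n_l$-null set, so the definition is consistent for almost every $l$, which is all the $\mathrm{d}l$-integral requires.
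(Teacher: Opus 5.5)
Your proposal is correct and follows the paper's route: the paper derives the theorem in one line from Lemma \ref{lmdcvfdml} (the splitting $\mathrm{d}l\cdot\mathrm{d}\mu^n_l=\mathrm{d}\mu^{n+1}$ off a null set) plus a change of variables, which is exactly your coarea/Fubini step applied to $e^{-L}$, a function constant on each level set. You add details the paper leaves implicit: the null set of non-proper and cut-locus chains (it is already closed, since the cut relation is closed on a complete manifold, so taking a closure is unnecessary), an explicit Gelfand--Leray form for $\mu^n_l$, and the remark that $\mu^n_l(P^l_n(X))$ need only be defined for almost every $l$.
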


\subsection{Examples from uniquely geodesic manifolds} \label{scexmfduqg}

As we shall see in the examples later, the cut locus and non-uniqueness of geodesics play an important role in the expansion of the magnitude function. When $(X,g)$ is a complete Riemannian manifold that is uniquely geodesic, which in particular has an empty cut locus, the magnitude function behaves very differently to the one with a non-trivial cut locus, as we see in the examples below. In this subsection $(X,g)$ is assumed to be a uniquely geodesic complete Riemannian manifold, but we may focus on Hadamard manifolds without losing the essence of the arguments.
\begin{lemma} \label{lmuqgdvlsm}
	Let $(X,g)$ is a complete Riemannian manifold which is uniquely geodesic. Let $\mu$ be measure of finite volume on $X$ induced by a smooth weighted volume form, $n \in \mathbb{Z}_{>0}$, and let $\mu_l^n$ be the induced volume form on $P^l_n (X)$. Then $\mu_l^n (P^l_n (X))$ is differentiable infinitely many times in $l$ for all $l > 0$.
\end{lemma}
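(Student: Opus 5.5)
The plan is to identify $\mu_l^n(P^l_n(X))$ with the density of the pushforward of $\mu^{n+1}$ under the length function $L(\bm{x}) := \sum_{k=1}^n \mathsf{d}(x_{k-1},x_k)$. Smoothness in $l$ then reduces to the smoothness of the fibre integrals of a smooth submersion. By Lemma \ref{lmdcvfdml} and the coarea formula, for $l>0$ we have
\begin{equation*}
	\mu_l^n (P^l_n(X)) = \frac{\mathrm{d}}{\mathrm{d} l} \, \mu^{n+1}\bigl( \{ \bm{x} \in P_n(X) \mid L(\bm{x}) \le l \} \bigr) = \int_{L^{-1}(l)} \frac{e^{\sum_k \psi(x_k)}}{|\nabla L|} \, \mathrm{d}\sigma ,
\end{equation*}
where $\mathrm{d}\sigma$ is the induced hypersurface measure from the product metric on $X^{n+1}$. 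Since $(X,g)$ is uniquely geodesic and complete, the cut locus is empty. Hence $\exp_x$ is a diffeomorphism for every $x$, and $\mathsf{d}$ is smooth on $X \times X$ minus the diagonal. Consequently $L$ is smooth on all of $P_n(X)$, with no exceptional set of measure zero needed here, unlike in Lemma \ref{lmdcvfdml}.

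The key steps are as follows. First, I show that $L$ is a submersion on $P_n(X)$. Moving $x_n$ along the unit-speed geodesic away from $x_{n-1}$ changes $L$ at rate $1$, so $|\nabla L| \ge 1$. Second, I construct a smooth vector field $V$ on $P_n(X)$ with $V(L)=1$, for instance $V = \nabla L/|\nabla L|^2$, or more concretely the radial field $\nabla_{x_n} \mathsf{d}(x_{n-1},\cdot)$ acting on the last point only. The flow of $V$ identifies nearby level sets $L^{-1}(l)$ and $L^{-1}(l')$. Third, I write the derivative of the density via the standard formula: if $F(l) = \int_{L \le l} f \, \mathrm{d}\mu^{n+1}$ with $f$ smooth, then
\begin{equation*}
	F'(l) = \int_{L=l} \iota_V (f\, \mathrm{d}\mu^{n+1}),
\end{equation*}
and the further derivatives are given by
\begin{equation*}
	\frac{\mathrm{d}^m}{\mathrm{d}l^m} \int_{L=l} \iota_V \omega = \int_{L=l} \iota_V (\mathcal{L}_V)^{m-1} \iota_V\omega \text{-type expressions}.
\end{equation*}
Concretely, $\frac{\mathrm{d}}{\mathrm{d}l}\int_{L=l}\iota_V\omega = \int_{L=l}\iota_V \mathrm{d}\iota_V\omega$ up to sign, equivalently $\int_{L=l} \iota_V (\mathrm{div}_\omega V)\,\omega$. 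Iterating this gives every derivative as an integral over $L^{-1}(l)$ of smooth quantities built from $\psi$, $V$, and their derivatives.

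The main obstacle is justifying the interchange of differentiation and integration, because $P_n(X)$ and the level sets $L^{-1}(l)$ are non-compact. There are two sources of trouble. The first is the part of $X^{n+1}$ near infinity, since $X$ itself may be non-compact with only $\mu(X)<\infty$. The second is the boundary of $P_n(X)$ near the diagonals $x_{k-1}=x_k$, where $\mathsf{d}$ fails to be smooth. For the diagonals, I would use the choice of $V$ acting on the last point only. For $l>0$ fixed, $L=l$ forces some consecutive distance to be at least $l/n$. I would therefore choose, via a partition of unity in $\bm{x}$, the index $k$ with $\mathsf{d}(x_{k-1},x_k) \ge l/n$ and flow $x_k$ radially away from $x_{k-1}$. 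This keeps the flow away from the singular diagonal for that pair, while diagonals of other pairs only involve integrable singularities, since they have codimension $\dim X$ and the integrand stays bounded.

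For the behaviour at infinity, the finiteness of $\mu(X)$ together with the boundedness of the derivatives of $V$ would be needed. In a Hadamard manifold, the Hessian of $\mathsf{d}$ is controlled by curvature bounds. If curvature is unbounded, I would expect to need to localise and impose decay of $e^{\psi}$. Following the paper's remark that one may focus on Hadamard manifolds, I would first carry out the argument there assuming bounded geometry, dominate the integrands by $C_m e^{\sum\psi}$, and conclude by dominated convergence that $\mu_l^n(P^l_n(X))$ is $C^\infty$ on $(0,\infty)$.
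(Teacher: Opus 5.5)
Your set-up is sound. With the cut locus empty, $L$ is smooth on $P_n(X)$ with $|\nabla L|\ge 1$. With $\omega=\mathrm{d}\mu^{n+1}$, $\mu^n_l(P^l_n(X))$ is the coarea density $\int_{L=l}\iota_V\omega$ for any $V$ with $V(L)=1$. Moreover, $\frac{\mathrm{d}}{\mathrm{d}l}\int_{L=l}\iota_V\omega=\int_{L=l}\iota_V\mathcal{L}_V\omega$, \emph{provided} the flow of $V$ carries $L^{-1}(l)$ onto $L^{-1}(l')$ inside $P_n(X)$ and the iterated integrands are dominated.

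The gap is at the faces $\mathsf{d}(x_{k-1},x_k)=0$ of $P_n(X)$, which you flag but do not resolve.
\begin{itemize}
\item For the radial field on $x_n$, the backward flow drives $x_n$ into $x_{n-1}$ after time $\mathsf{d}(x_{n-1},x_n)$. This is not bounded below on $L^{-1}(l)$.
\item The same happens for $\nabla L/|\nabla L|^2$. For $x_0<x_1<x_2$ in $\mathbb{R}$ it equals $\frac12(-1,0,1)$, and $x_0$ reaches $x_1$ after time $2(x_1-x_0)$.
\item The partition-of-unity repair fails for $k<n$. Moving $x_k$ alone radially away from $x_{k-1}$ gives $V_k(L)=1+\langle\nabla_{x_k}\mathsf{d}(x_{k-1},\cdot),\nabla_{x_k}\mathsf{d}(\cdot,x_{k+1})\rangle\in[0,2]$. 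This vanishes whenever $x_k$ lies on the minimising geodesic from $x_{k-1}$ to $x_{k+1}$, which is an open set of chains when $X=\mathbb{R}$. So it cannot be normalised to $V(L)=1$, and its flow can push $x_k$ into $x_{k+1}$, where $L$ is not differentiable.
\item The codimension count does not rescue this. It concerns integrability of the density, whereas you need the flow to stay in $P_n(X)$ and the functions $h_m$ with $(\mathcal{L}_V)^m\omega=h_m\omega$ to be dominated. Any field involving $\nabla_{x_k}\mathsf{d}(\cdot,x_{k+1})$, as your normalisation would, picks up a factor $\mathrm{Hess}\,\mathsf{d}$ of size $\mathsf{d}(x_k,x_{k+1})^{-1}$ with each derivative.
\end{itemize}

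The missing idea, which is exactly what the paper's decomposition into nested geodesic spheres provides, is to move the whole tail. Parametrise $P_n(X)$ by $x_j=\exp_{x_{j-1}}(s_j\theta_j)$ with $s_j>0$ and $\theta_j$ a unit vector, identified across base points by a global frame (which exists since $X\cong\mathbb{R}^{\dim X}$). This is a global chart precisely because the cut locus is empty. In these coordinates $L=s_1+\cdots+s_n$ is linear, and $\mathrm{d}\mu^{n+1}$ has density $\prod_j s_j^{\dim X-1}$ times a function smooth up to the faces $s_j=0$. So $\mu^n_l(P^l_n(X))$ is an integral over the simplex $Q^l_n$. After the substitution $s=lu$, it becomes $l^{n-1}$ times an integral over a fixed simplex of a function depending smoothly on $l$. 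In your language this is the Euler field $\sum_j (s_j/L)\partial_{s_j}$, whose flow is this rescaling and never reaches a face. Locally, $\partial_{s_k}$ moves $x_k$ radially and carries $x_{k+1},\dots,x_n$ along with it.

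Your caution at infinity is warranted: the paper's proof is silent there, and the statement genuinely needs more. But the missing hypothesis is control of the derivatives of $\psi$, not bounded geometry. Your bound $C_m e^{\sum\psi}$ requires it, because $\mathrm{div}_\omega V$ contains $V(\sum_k\psi(x_k))$. For example, on flat $\mathbb{R}$ with $n=1$ one has $\mu^1_l(P^l_1(\mathbb{R}))=2\int_{\mathbb{R}}e^{\psi(x)+\psi(x+l)}\,\mathrm{d}x$. Take the finite weight $e^{\psi}=e^{-x^2}+\sum_{j\ge1}\beta(j^2(x-j))$, with $\beta\ge0$ a smooth even bump supported in $[-\frac14,\frac14]$. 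Then this function is not differentiable at $l=1$: overlaps of consecutive bumps contribute a term of order $|l-1|^{1/2}$. Exponentially small bump heights with faster-shrinking widths still destroy $C^2$, so decay of $e^{\psi}$ alone does not suffice either.
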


\begin{proof}
	Since $(X,g)$ is uniquely geodesic, the cut locus is empty for any point $p \in X$. We then find that the sum of the distance functions
	\begin{equation*}
		\mathsf{d} (x_0, x_1) + \cdots + \mathsf{d} (x_{n-1}, x_n) =: l (x_0 , \dots , x_n)
	\end{equation*}
	is smooth everywhere on $P_n (X)$. As $l : P_n (X) \to \rl$ is clearly a submersion, we immediately find that $P^l_n (X)$ is a smooth hypersurface in $P_n (X)$ for any $l > 0$. Moreover, when we fix the length $\mathsf{d} (x_{j-1}, x_j)$ to be $s_j >0$ for each $j=1 , \dots , n$, we find that $P_n^l (X)$ can be decomposed into nested geodesic spheres in $X$ as
	\begin{equation*}
		P_n^l (X) = \bigsqcup_{s_1+ \cdots + s_n=l} \bigsqcup_{x_0 \in X}  \{ (x_0 , \dots , x_n ) \in P_n(X) \mid x_j \in S (x_{j-1}; s_j) , \; j=1 , \dots , n \} ,
	\end{equation*}
	where $S (x_{j-1}; s_j)$ is the geodesic sphere of radius $s_j$ around $x_{j-1}$.
	
	Suppose that we write $\mathrm{d} \mu_{s_j}$ for the volume form on the geodesic sphere $S (x_{j-1}; s_j)$ induced from the smooth volume form $\mathrm{d} \mu$ on $X$, noting that $s_j$ defines a smooth coordinate on $X$ as above. Then, writing
	\begin{equation*}
	Q_n^l := \{ s_1 , \dots , s_n >0 , \; s_1+ \cdots + s_n=l \} \subset \mathbb{R}^n ,
	\end{equation*}
	and by using the $(n-1)$-dimensional Hausdorff measure $\mathrm{d} s$ on $Q_{n,l}$ induced by the Euclidean metric, we find
	\begin{equation*}
		\mu_l^n (P^l_n (X)) = \int_{Q_n^l} \left( \int_{x_0 \in X} \int_{x_{1} \in S (x_{0}; s_1)} \cdots \int_{x_{n} \in S (x_{n-1}; s_n)} \mathrm{d} \mu_{s_n} \cdots \mathrm{d} \mu_{s_1} \mathrm{d} \mu \right) \mathrm{d} s ,
	\end{equation*}
	which is finite since geodesic spheres are compact. Since the volume of a geodesic sphere depends smoothly on its radius, we find that $\mu_l^n (P^l_n (X))$ depends smoothly on $l$.
\end{proof}

The crucial hypothesis of unique geodesics is used not only to ensure that the distance functions are smooth, but also in the range of integration $Q_n^l$ in the proof above: the only constraint amongst $s_1 , \dots , s_n >0$ is $s_1+ \cdots + s_n=l$, which obviously varies smoothly as $s_1 , \dots , s_n$ vary. However, when $X$ has a cut locus, the range of $s_1 , \dots , s_n$ has further constraints as we see in \S \ref{scexmfdcl}, which results in $\mu_l^n (P^l_n (X))$ being non-differentiable; compare Figures \ref{figsqhdmnc} and \ref{figsqcptcd}.

\begin{figure}[h!]
\centering
\begin{tikzpicture}[scale=2.5] 

    \fill[gray!10] (0, 0) -- (1.2, 0) -- (1.2, 1.2) -- (0, 1.2) -- cycle;  
    \draw[->, thin] (-0.1, 0) -- (1.2, 0) node[right] {$s_1$};
    \draw[->, thin] (0, -0.1) -- (0, 1.2) node[above] {$s_2$};
	\node at (1, -0.1)  {$l$};
	\node at (-0.1, 1) {$l$};
        
    \draw[line width=1.5pt, ] (0, 1) -- (1, 0);

\end{tikzpicture}
\caption{The length of the solid line varies smoothly.}
\label{figsqhdmnc}
\end{figure}

In any case, when $(X,g)$ is uniquely geodesic, $\mu_l^n (P^l_n (X))$ is smooth. Consider now a hypothetical situation when $\mu_l^n (P^l_n (X))$ is a polynomial in $l$. In this case, writing $\mu_l^n (P^l_n (X)) = \sum_{m=1}^{M} a_{m} \frac{l^m}{m!} $ for some $a_1, \dots , a_M \in \mathbb{R}$ and $M \in \mathbb{Z}_{>0}$, repeated applications of integration by parts yield
\begin{equation*}
	\int_{X^{n+1}} e^{-t \sum_{k=1}^n \mathsf{d}(x_{k}, x_{k+1})} \mathrm{d} \mu^{n+1} = \int_0^{\infty} e^{- t l } \mu_l^n (P_n^l(X) ) \mathrm{d} l  = \sum_{m=1}^{M} \frac{a_{m}}{t^{m+1}},
\end{equation*}
which is a rational function in $t$ and there are no terms decaying with order $e^{-ct}$ for some $c >0$. A similar conclusion holds when $\mu_l^n (P^l_n (X))$ is of the form $e^{- \alpha l } \left( \sum_{m=1}^{M} a_{m} \frac{l^m}{m!} \right)$, for some $\alpha >0$, which may happen when $\mu$ is a weighted volume form with an exponentially decaying weight, for which we have $\int_{X^{n+1}} e^{-t \sum_{k=1}^n \mathsf{d}(x_{k}, x_{k+1})} \mathrm{d} \mu^{n+1} = \sum_{m=1}^{M} a_{m} (t+ \alpha )^{-m-1}$, which is again a rational function in $t$.

In general, even when $\mu_l^n (P^l_n (X))$ is smooth by Lemma \ref{lmuqgdvlsm}, it does not admit such simplified forms. However, the above argument using the integration by parts does hold as long as $\mu_l^n (P^l_n (X))$ is smooth. The next proposition makes this claim more precise, and shows that the $N$-th partial magnitude for this space is a rational function in $t$ up to remainder terms in the Taylor expansion.

\begin{proposition}
Let $(X,g)$ be a complete Riemannian manifold which is uniquely geodesic. Let $\mu$ be a smooth volume form on $X$ with the following properties:
\begin{enumerate}
	\item $\mu(X)$ is finite,
	\item $\mu_l^n (P^l_n (X))$ is differentiable infinitely many times for all $l > 0$, with the Taylor expansion around the origin as
	\begin{equation*}
		\mu_l^n (P^l_n (X)) = \sum_{m=1}^M a_{n,m} \frac{l^m}{m!}+ r_{n,M}(l)
	\end{equation*}
	where $r_{n,M}$ is the remainder term,
	\item for any $1 \le n \le N$ and $m \in \mathbb{Z}_{>0}$, there exists $C_1 = C_1 (n,m) >0$ such that
	\begin{equation*}
		\sup_{ l \ge 0} \left| \frac{\mathrm{d}^m}{\mathrm{d} l^m } \mu_l^n (P^l_n (X)) \right| \le C_1,
	\end{equation*}
\end{enumerate}
all of which are satisfied when $\mu$ is a smooth weighted volume form which decays rapidly as $l \to \infty$. Then, for any $N,M \in \mathbb{Z}_{>0}$ and any $s>0$, there exist constants $C >0$ and $t_0 >0$, both depending only on $M, N, s, \max_{n=1, \dots , N} \{ C_1 (n,M+1) \}$, such that
\begin{equation*}
	\left|\mathrm{Mag} (X , t \mathsf{d} , \mu ; N )  - \mu (X) - \sum_{m=1}^M \frac{\alpha_m}{t^{m+1}}  - \int_0^{\infty} e^{-tl} r_M (l) \mathrm{d} l \right| \le C e^{-ts/2}
\end{equation*}
holds uniformly for all $t \ge t_0$, where we defined
\begin{equation*}
	\alpha_m := \sum_{n=1}^N (-1)^n a_{n,m} , \quad r_M (l) := \sum_{n=1}^N (-1)^n r_{n,M}(l) .
\end{equation*}
\end{proposition}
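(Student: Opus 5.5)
The plan is to reduce everything to the one-dimensional Laplace-type integrals supplied by Theorem \ref{thmggrmmf}, and then integrate the Taylor polynomial term by term. Replacing $\mathsf{d}$ by $t\mathsf{d}$ only rescales the length parameter. So, with $\mu_l^n(P_n^l(X))$ always computed for the unscaled metric $\mathsf{d}$, Theorem \ref{thmggrmmf} gives
\begin{equation*}
\mathrm{Mag}(X,t\mathsf{d},\mu;N) = \mu(X) + \sum_{n=1}^N (-1)^n \int_0^\infty e^{-tl}\, \mu_l^n(P_n^l(X))\, \mathrm{d} l .
\end{equation*}
Each of these integrals is finite for every $t>0$. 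Indeed, by hypothesis (3), $\mu_l^n(P_n^l(X))$ grows at most polynomially in $l$, since its $m$-th derivative is bounded and hence the function is dominated by a polynomial of degree $m$. Alternatively, for $t \ge 1$, finiteness follows from $\mu(X)<\infty$ and the bound $e^{-t\sum \mathsf{d}} \le 1$.

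I would then insert the expansion $\mu_l^n(P_n^l(X)) = \sum_{m=1}^M a_{n,m} l^m/m! + r_{n,M}(l)$ into each integral. Two facts are needed here. First, the constant term vanishes: $\mu_l^n(P_n^l(X)) \to 0$ as $l \to 0$, because the sets $\{\sum \mathsf{d} < \epsilon\}$ shrink to the diagonal, which is $\mu^{n+1}$-null by Lemma \ref{lmacompn}. Second, $\int_0^\infty e^{-tl} l^m/m!\, \mathrm{d} l = t^{-m-1}$, which is the integration by parts mentioned just before the statement. Summing over $n$ with signs then produces the terms $\sum_{m} \alpha_m t^{-m-1}$ exactly, together with the integral $\int_0^\infty e^{-tl} r_M(l)\, \mathrm{d} l$. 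Read literally, the left-hand side of the claimed inequality is therefore identically zero, and the bound holds trivially for every $t>0$.

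To make the estimate meaningful, and to explain the constants $C$, $t_0$, $s$ in the statement, I would also give the split version. Write $\int_0^\infty = \int_0^s + \int_s^\infty$. On $[0,s]$, keep the Taylor polynomial and bound the remainder with Lagrange's formula:
\begin{equation*}
|r_{n,M}(l)| \le C_1(n,M+1)\, l^{M+1}/(M+1)! .
\end{equation*}
On $[s,\infty)$, the polynomial pieces contribute $\int_s^\infty e^{-tl} l^m\, \mathrm{d} l \le e^{-ts/2}\int_s^\infty e^{-tl/2} l^m\, \mathrm{d} l$. The full function contributes a term of the same shape, since by hypothesis (3) it is bounded by a polynomial whose coefficients depend only on the $C_1$'s. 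These tails are therefore bounded by $C e^{-ts/2}$ once $t \ge t_0$. Here $t_0$ is chosen so that $e^{-tl/2}$ dominates the polynomial growth uniformly in $t$, and $C$ depends only on $M$, $N$, $s$ and $\max_n C_1(n,M+1)$.

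The main obstacle is the bookkeeping that makes the constants depend only on the advertised quantities. In particular, the low-order derivatives of $\mu_l^n(P_n^l(X))$ at the origin, which control the polynomial growth, must be expressed through the $a_{n,m}$ or the $C_1$'s. I would handle this by integrating hypothesis (3) $M+1$ times from $0$, with zero constant term, to get an explicit polynomial majorant.
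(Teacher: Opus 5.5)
Your argument is correct, and its decisive part takes a different route from the paper. The conclusion contains $\int_0^\infty e^{-tl}r_M(l)\,\mathrm{d}l$, so $r_{n,M}$ must be defined on all of $[0,\infty)$ as $\phi_n(l)-\sum_{m=1}^M a_{n,m}l^m/m!$, where $\phi_n(l):=\mu_l^n(P_n^l(X))$. Three facts then make the left-hand side vanish identically: linearity, $\int_0^\infty e^{-tl}l^m\,\mathrm{d}l=m!/t^{m+1}$, and $\int_0^\infty e^{-tl}\phi_n\,\mathrm{d}l\le\int_0^\infty\phi_n\,\mathrm{d}l=\mu(X)^{n+1}$ for every $t>0$. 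So any $C$ and $t_0$ work.

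The paper instead repeats the proof of Watson's lemma, which is essentially your ``split version''. It Taylor-expands on $[0,s]$ and bounds three tail terms on $[s,\infty)$:
the polynomial tails by Cauchy--Schwarz, $\int_s^\infty e^{-tl}\phi_n$ by $e^{-ts}\mu(X)^{n+1}$, and the remainder tail by a Lagrange bound. Those three tails sum to zero, so the paper is in effect estimating zero. Its constant also visibly involves $\phi_n^{(m)}(0)$ and $\mu(X)$, not only $\max_n C_1(n,M+1)$. Your observation makes the actual content of the statement transparent and avoids that bookkeeping.

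The splitting only carries information for the variant with $\int_0^s e^{-tl}r_M(l)\,\mathrm{d}l$ in place of $\int_0^\infty$. There the obstacle you flag disappears. The error is exactly $\sum_n(-1)^n\int_s^\infty e^{-tl}r_{n,M}\,\mathrm{d}l$. The global Lagrange bound $|r_{n,M}(l)|\le C_1(n,M+1)\,l^{M+1}/(M+1)!$ from hypothesis (3) bounds it by $\sum_n C_1(n,M+1)(2/t)^{M+2}e^{-ts/2}$, with no low-order coefficients involved (assuming $\phi_n(0)=0$; see below).

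One correction: your ``first fact'' is neither needed nor true in general. The identity holds by linearity whatever $\phi_n(0)$ is, because a constant term is simply absorbed into $r_{n,M}$. Your justification is also too weak: the sets $\{\sum_k\mathsf{d}(x_{k-1},x_k)<\epsilon\}$ shrinking to a null set only gives $\int_0^\epsilon\phi_n\,\mathrm{d}l\to0$, which holds for any integrable density. For $\dim X=1$, $n=1$ and $\mu=\rho\,\mathrm{d}x$, one has $\phi_1(0^+)=2\int\rho^2\,\mathrm{d}x\neq0$; the paper's Gaussian example gives $\sqrt{2\pi}$. The paper's own assertion $\phi_n(0)=0$ has the same defect. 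Drop that step. It only matters for the Lagrange bound in the variant above, where a nonzero $\phi_n(0)$ contributes an additional term $|\phi_n(0)|e^{-ts}/t$.
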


\begin{proof}
	We first note that the second hypothesis is always satisfied by Lemma \ref{lmuqgdvlsm}. Thus
	\begin{equation*}
		\phi_n (l):= \mu_l^n (P^l_n (X)) 
	\end{equation*}
	is a smooth function in $l$ for any $l >0$, where we also have $\phi_n (0) =0$. By Lemma \ref{lmdcvfdml} and Theorem \ref{thmggrmmf}, we have
	\begin{equation*}
		\int_{\bm{x} \in X^{n+1}}  e^{- t \sum_{k=1}^n \mathsf{d} (x_{k-1}, x_k)} \mathrm{d} \mu^{n+1} = \int_0^{\infty} e^{-t l } \phi_n (l) \mathrm{d} l .
	\end{equation*}

	We repeat the proof of Watson's lemma (see \cite[Proposition 2.1]{Miller}, and also \cite[\S 5]{Willerton}) to evaluate this integral. For any $s >0$ and any $t>0$, we have
	\begin{equation*}
		\left| \int_s^{\infty} e^{-tl} \phi_n (l) \mathrm{d}l \right| \le e^{-ts} \int_{0}^{\infty} \phi_n (l) \mathrm{d} l = e^{-ts} \mu^{n+1} (X^{n+1}) = e^{-ts} \mu (X)^{n+1}.
	\end{equation*}
	We use the Taylor series for $\phi_n (l)$ with the remainder term
	\begin{equation*}
		\phi_n (l) = \sum_{m=1}^M \frac{\phi_n^{(m)} (0)}{m!} l^m + r_{n,M}(l)
	\end{equation*}
	which is valid for $l \in [0,s]$, where $r_{n,M}(l)$ is the remainder term, which can be bounded by e.g.
	\begin{equation*}
		\left| r_{n,M}(l) \right| \le \sup_{0 \le \tau < s} \left| \phi_n^{(M+1)} (\tau ) \right| \frac{l^{M+1}}{(M+1)!}.
	\end{equation*}
	We thus get
	\begin{equation*}
		\int_0^s e^{-tl} \phi_n (l) \mathrm{d} l = \sum_{m=1}^M \frac{\phi_n^{(m)} (0)}{m!} \int_0^s e^{-tl} l^m \mathrm{d} l + \int_0^s e^{-tl} r_{n,M} (l) \mathrm{d} l,
	\end{equation*}
	where the first integral can be decomposed as
	\begin{equation*}
		\int_0^s e^{-tl} l^m \mathrm{d} l = \int_0^{\infty} e^{-tl} l^m \mathrm{d} l - \int_{s}^{\infty} e^{-tl} l^m \mathrm{d} l = \frac{m!}{t^{m+1}} - \int_{s}^{\infty} e^{-tl} l^m \mathrm{d} l
	\end{equation*}
	with the bound on the second term by Cauchy--Schwarz as
	\begin{equation*}
		\left| \int_{s}^{\infty} e^{-tl} l^m \mathrm{d} l \right| \le e^{-tl/2} \sqrt{\int_0^{\infty} e^{-tl}l^{2m} \mathrm{d} l} = e^{-ts/2} \frac{\sqrt{(2m)!}}{t^{m+1/2}}.
	\end{equation*}
	We thus get
	\begin{equation*}
		\int_0^s e^{-tl} \phi_n (l) \mathrm{d} l - \sum_{m=1}^M \frac{\phi_n^{(m)} (0)}{t^{m+1}}  -   \int_0^s e^{-tl} r_{n,M} \mathrm{d} l = - \sum_{m=1}^M \frac{\phi_n^{(m)} (0)}{m!} \int_s^{\infty} e^{-tl} l^m \mathrm{d} l
	\end{equation*}
	for any $s>0$ and any $t >0$. The above estimates yield
	\begin{align*}
		&\left| \int_0^{\infty} e^{-tl} \phi_n (l) \mathrm{d} l - \sum_{m=1}^M \frac{\phi_n^{(m)} (0)}{t^{m+1}}  - \int_0^{\infty} e^{-tl} r_{n,M} (l) \mathrm{d} l \right| \\
		&\le \left| \sum_{m=1}^M \frac{\phi_n^{(m)} (0)}{m!}  \int_s^{\infty} e^{-tl} l^m \mathrm{d} l \right| + \left| \int_s^{\infty} e^{-tl} \phi_n (l) \mathrm{d}l \right| + \int_{s}^{\infty} e^{-tl} \left| r_{n,M} (l)  \right| \mathrm{d} l \\ 
		&\le e^{-ts/2} \left(  \sum_{m=1}^M \left| \frac{ \phi_n^{(m)} (0)}{m!} \right| \frac{\sqrt{(2m)!}}{t^{m+1/2}} +  \mu (X)^{n+1} + \sup_{0 \le \tau < s} \left| \phi_n^{(M+1)} (\tau ) \right| \frac{\sqrt{(2(M+1))!}}{t^{M+3/2}} \right),
	\end{align*}
	where the terms in the bracket can be bounded by a constant depending only on $M$, $n$, $s$, and $\max_{n=1, \dots , N} \{ C_1 (n,M+1) \}$. We get the required result by summing up the contributions from $n=1, \dots, N$.
\end{proof}

This theorem says, up to the remainder term $\int_0^{\infty} e^{-tl} r_M (l) \mathrm{d} l$ which may be difficult to evaluate in concrete examples, the asymptotic expansion of the partial magnitude function admits no term that decay at order $e^{-ct}$ for any constant $c>0$ when $(X,g)$ is uniquely geodesic. Compared to other examples with a non-trivial cut locus (computed in \S \ref{scexmfdcl}), this property seems to correspond to the results due to Asao \cite{Asao} and Gomi \cite{Gomi} which states the magnitude homology is trivial for uniquely geodesic spaces. For the moment, we verify this phenomenon in the following two examples for small values of $N$.

\begin{example}
Consider $(\mathbb{R} , | \cdot | , e^{-x^2})$, with $N=1$. First note
\begin{align*}
	&\int_{- \infty}^{\infty} \int_{- \infty}^{\infty} e^{-t |x-y|} e^{-x^2 -y^2} \mathrm{d} x \mathrm{d} y \\
	&= \int_{- \infty}^{\infty} \left( \int_{y}^{\infty} e^{-t (x-y)} e^{-x^2} \mathrm{d} x \right) e^{-y^2} \mathrm{d} y  +  \int_{- \infty}^{\infty} \left( \int_{- \infty}^{y} e^{-t (y-x)} e^{-x^2} \mathrm{d} x \right) e^{-y^2} \mathrm{d} y .
\end{align*}
Setting $l = |x-y|$, which implies $x=l+y$ when $x>y$ and $x = y-l$ when $x \le y$, we compute the above integrals by change of variables and Fubini's theorem as
\begin{align*}
	&\int_{- \infty}^{\infty} \left( \int_{0}^{\infty} e^{-tl} e^{-(l+y)^2} \mathrm{d} l \right) e^{-y^2} \mathrm{d} y  +  \int_{- \infty}^{\infty} \left( \int_{0}^{\infty} e^{-t l} e^{-(y-l)^2} \mathrm{d} l \right) e^{-y^2} \mathrm{d} y \\
	&=\int_0^{\infty} e^{-tl} \left(  \int_{- \infty}^{\infty}  e^{-(l+y)^2-y^2} + e^{-(y-l)^2-y^2} \mathrm{d} y  \right) \mathrm{d} l \\
	&= \int_0^{\infty} e^{-tl} \left( \sqrt{2 \pi } e^{-l^2/2} \right) \mathrm{d} l \\
	&= \pi e^{-t^2/2},
\end{align*}
which decays exponentially in $t^2$, and not a constant multiple of $t$. The computation above also implies $\mu_1^l (\mathbb{R}) = \sqrt{2 \pi } e^{-l^2/2}$.

We can proceed to compute the case $N \ge 2$, but the computation quickly becomes difficult. Note that the integral in question 
\begin{equation*}
	\int_{- \infty}^{\infty} \int_{- \infty}^{\infty} \int_{- \infty}^{\infty} e^{-t |x-y|-t|y-z|} e^{-x^2 -y^2-z^2} \mathrm{d} x \mathrm{d} y \mathrm{d} z
\end{equation*}
is not symmetric in $x,y,z$, and we cannot naively continue by induction from the case $N=1$. We can still have some formula for the case $N=2$. Indeed, setting $s_1 = |x-y|$ and $s_2 = |y-z|$, we compute as above to find
\begin{align*}
	&\int_{- \infty}^{\infty} \int_{- \infty}^{\infty} \int_{- \infty}^{\infty} e^{-t |x-y|-t|y-z|} e^{-x^2 -y^2-z^2} \mathrm{d} x \mathrm{d} y \mathrm{d} z \\
	&= \int_{- \infty}^{\infty} e^{-x^2}   \left( \int_{- \infty}^{\infty} e^{-t|x-y|} \left( \int_{0}^{\infty} e^{-ts_2} e^{-(y-s_2)^2} \mathrm{d} s_2 \right) e^{-y^2} \mathrm{d} y \right) \mathrm{d}x \\
	&\quad + \int_{- \infty}^{\infty} e^{-x^2} \left( \int_{- \infty}^{\infty} e^{-t|x-y|}  \left( \int_{0}^{\infty} e^{ -t s_2} e^{-(y+ s_2)^2} \mathrm{d} s_2 \right) e^{-y^2} \mathrm{d} y \right) \mathrm{d}x
\end{align*}
which can be decomposed into the following integrals
\begin{align*}
	&\int_{- \infty}^{\infty} e^{-x^2} \left(  \int_{0}^{\infty} e^{-t s_1} \left( \int_{0}^{\infty} e^{-ts_2} e^{-(s_1+x-s_2)^2} \mathrm{d} s_2 \right) e^{-(s_1+x)^2} \mathrm{d} s_1 \right) \mathrm{d}x \\
	&\quad + \int_{- \infty}^{\infty} e^{-x^2} \left(  \int_{0}^{\infty} e^{-ts_1} \left( \int_{0}^{\infty} e^{-ts_2} e^{-(x-s_1-s_2)^2} \mathrm{d} s_2 \right) e^{-(x-s_1)^2} \mathrm{d} s_1 \right) \mathrm{d}x \\
	&\quad + \int_{- \infty}^{\infty} e^{-x^2} \left( \int_{0}^{\infty} e^{-t s_1}  \left( \int_{0}^{\infty} e^{ -t s_2} e^{-(x+s_1+ s_2)^2} \mathrm{d} s_2 \right) e^{-(x+s_1)^2} \mathrm{d} s_1 \right) \mathrm{d}x \\
	&\quad + \int_{- \infty}^{\infty} e^{-x^2} \left( \int_{0}^{\infty} e^{-t s_1}  \left( \int_{0}^{\infty} e^{ -t s_2} e^{-(x-s_1+ s_2)^2} \mathrm{d} s_2 \right) e^{-(x-s_1)^2} \mathrm{d} s_1 \right) \mathrm{d}x,
\end{align*}
to get
\begin{align*}
	&\int_{- \infty}^{\infty} \int_{- \infty}^{\infty} \int_{- \infty}^{\infty} e^{-t |x-y|-t|y-z|} e^{-x^2 -y^2-z^2} \mathrm{d} x \mathrm{d} y \mathrm{d} z \\
	&= 4\sqrt{\frac{\pi}{3}} \int_{0}^{\infty} \int_{0}^{\infty}  e^{-t(s_1 + s_2)} e^{-(7s^2_1 +4s_2^2)/3} \cosh \left( \frac{10s_1s_2}{3} \right)   \mathrm{d} s_1 \mathrm{d} s_2 .
\end{align*}
Performing the integral above requires the use of error functions.
\end{example}

\begin{example}
Consider $(\mathbb{R} , | \cdot | , e^{-|x|})$, with $N=1$. We compute the first partial magnitude for $(\mathbb{R} , | \cdot | , e^{-|x|})$; this volume form is not smooth, but decays rapidly and puts no mass on non-proper chains. Direct computation yields
\begin{align*}
	&\int_{- \infty}^{\infty} \left( \int_{- \infty}^{\infty} e^{-t |x-y|} e^{-|x|} e^{-|y|} \mathrm{d} x \right) \mathrm{d} y \\
	&=\int_{- \infty}^{0} \left( \int_{- \infty}^{y} e^{-t (y-x)} e^{x} e^{y} \mathrm{d} x \right) \mathrm{d} y + \int_{0}^{\infty} \left( \int_{- \infty}^{0} e^{-t (y-x)} e^{x} e^{- y} \mathrm{d} x \right) \mathrm{d} y \\
	&\quad + \int_{0}^{\infty} \left( \int_{0}^{y} e^{-t (y-x)} e^{-x} e^{-y} \mathrm{d} x \right) \mathrm{d} y + \int_{- \infty}^{0} \left( \int_{y}^{0} e^{-t (x-y)} e^{x} e^{y} \mathrm{d} x \right) \mathrm{d} y \\
	&\quad + \int_{- \infty}^{0} \left( \int_{0}^{\infty} e^{-t (x-y)} e^{-x} e^{y} \mathrm{d} x \right) \mathrm{d} y + \int_{0}^{\infty} \left( \int_{y}^{\infty} e^{-t (x-y)} e^{-x} e^{-y} \mathrm{d} x \right) \mathrm{d} y \\
	&=\frac{2(t+2)}{(t+1)^2} ,
\end{align*}
and hence
\begin{equation*}
	\mathrm{Mag} ( \mathbb{R} , t | \cdot | , e^{-|x|} ; 1 ) = 2 - \frac{2(t+2)}{(t+1)^2}.
\end{equation*}
\end{example}

\subsection{Examples from compact Riemannian manifolds} \label{scexmfdcl}

Let $(X,g)$ be a compact Riemannian manifold. By Theorem \ref{thmggrmmf}, it suffices to compute
\begin{equation*}
	\int_{X^{n+1}} e^{-\sum_{k=1}^n \mathsf{d}(x_{k}, x_{k+1})} \mathrm{d} \mu^{n+1} = \int_0^{\infty} e^{- l} \mu^n_l (P^l_n (X)) \mathrm{d} l
\end{equation*}
for all $n \in \mathbb{Z}_{>0}$ to compute the $\mu$-magnitude. The hard part is to compute $\mu^n_l (P^l_n (X))$. We do so explicitly for various examples for small $n$: we treat $S^1$ and $S^2$ with the standard round metric, 2-torus with the standard flat metric. These examples show that the computation quickly becomes very difficult when $n$ is large. For closed bounded intervals in $\mathbb{R}$ we can compute the integral above for any $n$ (see Lemma \ref{lmmgcbinv}), but even in this case we do not have a closed formula (we only get a recursion relationship).

The difficulty in computing the integral above is that the distance function $\mathsf{d}$ is not differentiable on the cut locus, which necessitates a case-by-case treatment depending on the value of $l$. It leads to the following interesting phenomenon: for a circle $S^1_r$ of radius $r$ (and similarly for $S^2$ and 2-tori), the integrals
\begin{equation*}
	\int_0^{\infty} e^{- tl} \mu^n_l (P^l_n (S^1_r)) \mathrm{d} l,
\end{equation*}
when expanded in $t$, contain a term of order $e^{- \pi r t}$ for any large $n$. Here $\pi r$ is the diameter of $S_r^1$ with respect to the round metric, but it can be also interpreted as the injectivity radius. Indeed, close inspection of the computation reveals that this terms comes from cut locus, where the distance function fails to be differentiable. These examples seems to indicate that the existence of cut locus (i.e.~non-uniqueness of geodesics) is connected to the $\mu$-magnitude, in a subtle but non-trivial manner. While we work only on a limited number of examples, the computation seems to suggest that this phenomenon happens more generally.

Note that all the examples we discuss below are homogeneous manifolds. In general, the injectivity radius depends on the point in the manifold, but the transitivity of the action of the isometry group implies that it does not depend on the point.

\subsubsection{2-torus with $N=1$}
Let $T^2$ be a 2-torus, given by identifying parallel edges of a square (embedded in $\mathbb{R}^2$, with coordinates $x,y$), with a flat metric $g$ and the associated volume form $\mu  = \mathrm{d} \mathrm{vol}_g$. Let $(0,0), (1,0), (0,1), (1,1) \in \mathbb{R}^2$ be the vertices of the square, and we fix a basepoint $y$ at the centre $(1/2 , 1/2)$ of the square (we may move any point to this basepoint by an isometry of $T^2$). For this torus, the diameter is $R := 1/ \sqrt{2}$ and the injectivity radius is $\rho := 1/2$.

We compute the partial $\mu$-magnitude up to order $N=1$. The first term is the volume, which is clearly $\mu (T^2 ) =1$. The term for $N=1$ is given by
\begin{equation*}
	\int_{(x_0, x_1) \in T^2 \times T^2} e^{- t \mathsf{d} (x_0, x_1)} \mathrm{d} \mu^2 = \mu (T^2) \int_{x_1 \in T^2} e^{- t \mathsf{d} (y, x_1)} \mathrm{d} \mu
\end{equation*}
where we moved $x_0$ to the basepoint $y$ by an isometry. We now pass to the polar coordinates around $y$, to write
\begin{equation*}
	\mathrm{d} \mu = \mathrm{d} x \mathrm{d} y = l \mathrm{d} l \mathrm{d} \theta ,
\end{equation*}
where we note that $l = \mathsf{d} (y, x_1)$ is the radial direction which we integrate from $0$ to $R$. Thus the above integral is effectively computing the area of a square (with a weight by $e^{- t \mathsf{d} (y, x_1)}$) in terms of the polar coordinates. We get
\begin{equation*}
	\int_{T^2} e^{- t \mathsf{d} (y, x_1)} \mathrm{d} \mu (x_1) = \int_0^{R} e^{-tl} F(l) \mathrm{d} l
\end{equation*}
where
\begin{equation*}
	F(l) := \begin{cases}
		2 \pi l &\quad ( 0 < l \le \rho) \\
		l \left( 2 \pi - 8 \arccos \left( \frac{1}{2l} \right) \right) &\quad ( \rho \le l  \le R )
	\end{cases}
\end{equation*}
is the surface volume of the level set $\{ x_1 \in T^2 \mid \mathsf{d} (y, x_1) = l\}$. This function is continuous but not differentiable at $l= \rho$, as illustrated in Figure \ref{figclcttr}, in which $F(l)$ is the length of the solid curve.

\begin{figure}[h!]
\begin{minipage}[b]{0.49\columnwidth}
\centering
\begin{tikzpicture}[scale=4, 
    >=stealth, 
    mark_arrow/.style={
        decorate,
        decoration={markings, mark=at position 0.5 with {\arrow[scale=1.5]{>}}} 
    },
    mark_arrow2/.style={
        decorate,
        decoration={markings, mark=at position 0.5 with {\arrow[scale=1.5]{>>}}} 
    }] 

    \draw[line width=1pt, dotted] (0, 0) -- (1, 0) -- (1, 1) -- (0, 1) -- cycle;

    \draw[line width=1.5pt] (0.5, 0.5) circle (0.4);

    
    \draw[mark_arrow, line width=1pt] (0, 0) -- (1, 0); 
    
    \draw[mark_arrow, line width=1pt] (0, 1) -- (1, 1);

    \draw[mark_arrow2, line width=1pt] (0, 0) -- (0, 1); 
    \draw[mark_arrow2, line width=1pt] (1, 0) -- (1, 1); 
    
    \fill (0.5, 0.5) circle (0.5pt) node[above right, xshift=2pt, font=\small] {$y$};

\end{tikzpicture}
\end{minipage}
\begin{minipage}[b]{0.49\columnwidth}
\centering
\begin{tikzpicture}[scale=4, 
    >=stealth, 
    mark_arrow/.style={
        decorate,
        decoration={markings, mark=at position 0.5 with {\arrow[scale=1.5]{>}}} 
    },
    mark_arrow2/.style={
        decorate,
        decoration={markings, mark=at position 0.5 with {\arrow[scale=1.5]{>>}}} 
    }] 

    \begin{scope}
        \clip (0, 0) rectangle (1, 1);
        
        \draw[line width=1.5pt] (0.5, 0.5) circle (0.55);
    \end{scope}

    \draw[line width=1pt, black, dotted] (0, 0) -- (1, 0) -- (1, 1) -- (0, 1) -- cycle;

    
    \draw[mark_arrow, line width=1pt] (0, 0) -- (1, 0); 
    
    \draw[mark_arrow, line width=1pt] (0, 1) -- (1, 1);

    \draw[mark_arrow2, line width=1pt] (0, 0) -- (0, 1); 
    \draw[mark_arrow2, line width=1pt] (1, 0) -- (1, 1); 

    \fill (0.5, 0.5) circle (0.5pt) node[above right, xshift=2pt, font=\small] {$y$};

\end{tikzpicture}
\end{minipage}
\caption{The level set $\mathsf{d} (y, x_1) = l$, for $l < \rho$ (left) and $l > \rho$ (right).}
\label{figclcttr}
\end{figure}

We compute the above integral as
\begin{equation*}
\int_0^{R} e^{-tl} F(l) \mathrm{d} l =   \frac{2 \pi}{t^2} - 8  \int_{\rho}^{R} e^{-t l} l \arccos \left( \frac{1}{2l} \right) \mathrm{d} l  - 2 \pi \frac{R t+1}{t^2} e^{-t R} ,
\end{equation*}
where the decay rate of the middle term can be evaluated by
\begin{align*}
	\frac{e^{- \rho t}}{t^2} \left( 1 + \frac{t(2 \rho -1)}{2} - \frac{(2 -t+ 2 R t)e^{- t (R - \rho )}}{2} \right) &\le \int_{\rho}^{R} e^{-t l } l \arccos \left( \frac{1}{2l} \right) \mathrm{d} l \\
	&\le \frac{e^{- \rho t}}{t^2} \frac{\pi}{2} \left( 1+\rho t  - (1+ R t)e^{- t (R - \rho )} \right)
\end{align*}
by using $\frac{2l-1}{2} \le l \arccos ( \frac{1}{2l} ) \le \frac{\pi l}{2}$ which follows from a crude estimate $1-x \le \cos x$. While the above is just an inequality, it does prove that the exponential decay rate of the integral is of order $e^{- \rho t}$. Thus, the first partial magnitude function can be computed as
\begin{equation*}
	\mathrm{Mag} (T^2 , t \mathsf{d} , \mu ;1) = 1 - \frac{2 \pi}{t^2} + \left( \text{terms of order between } \frac{e^{- \rho t}}{t^2} \text{ and } \frac{e^{- \rho t}}{t} \right) + 2 \pi \frac{R t+1}{t^2} e^{-t R} .
\end{equation*}
In this formula, there are two terms that decay exponentially, which are $e^{- t \rho}$ and $e^{-t R}$. These correspond to the lengths of geodesics for which the number of geodesics from $y \in T^2$ jumps. Indeed, the number of geodesics between $p \in T^2$ and $y$ is one as as long as the length between $p$ and $y$ is less than the injectivity radius $\rho$. When the length reaches $\rho$, the number of geodesics jumps to 2. Furthermore, when $p$ reaches the point $q$ which achieves $\mathsf{d} (y,q) = R$ (the diameter), the number of geodesics jumps to 4 (see Figure \ref{figspqjp24}).

\begin{figure}[h!]
\centering
\begin{tikzpicture}[scale=4,
>=stealth, 
    mark_arrow/.style={
        decorate,
        decoration={markings, mark=at position 0.5 with {\arrow[scale=1.5]{>}}} 
    },
    mark_arrow2/.style={
        decorate,
        decoration={markings, mark=at position 0.5 with {\arrow[scale=1.5]{>>}}} 
    }]

\coordinate (A) at (0,0);
\coordinate (B) at (1,0);
\coordinate (C) at (1,1); 
\coordinate (D) at (0,1); 

\coordinate (P) at (0.5, 0.5); 
\coordinate (Q) at (0.8, 1);
\coordinate (R) at (0.8, 0);

\draw[line width=1pt, dotted] (A) -- (B) -- (C) -- (D) -- cycle;
   \draw[mark_arrow, line width=1pt] (0, 0) -- (1, 0); 
    
    \draw[mark_arrow, line width=1pt] (0, 1) -- (1, 1);

    \draw[mark_arrow2, line width=1pt] (0, 0) -- (0, 1); 
    \draw[mark_arrow2, line width=1pt] (1, 0) -- (1, 1); 

\draw[line width=1pt] (P) -- (A); 
\draw[line width=1pt] (P) -- (D); 
\draw[line width=1pt] (P) -- (B); 
\draw[line width=1pt] (P) -- (C); 

\draw[line width=1pt, densely dashed] (P) -- (Q);
\draw[line width=1pt, densely dashed] (P) -- (R);

\fill (P) circle (0.5pt) node[above] {$y$};
\fill (Q) circle (0.5pt) node[above] {$p$};
\fill (R) circle (0.5pt) node[below] {$p$};

\fill (A) circle (0.5pt) node[below left] {$q$};
\fill (B) circle (0.5pt) node[below right] {$q$};
\fill (C) circle (0.5pt) node[above right] {$q$};
\fill (D) circle (0.5pt) node[above left] {$q$};

\end{tikzpicture}
\caption{The number of geodesics jumps from 2 to 4 when $p$ reaches $q$.}
\label{figspqjp24}
\end{figure}

Recall also that there are at least two geodesics between two points that attain the diameter \cite[Exercise 2.118]{GHL04}.

\subsubsection{Circle with $N=3$}
	Let $X$ be the circle $S^1_r$ of radius $r$, with respect to the standard round metric. As in the proof of Lemma \ref{lmuqgdvlsm}, we write
	\begin{equation*}
		P_n^l (S^1_r) = \bigsqcup_{s_1+ \cdots + s_n=l} \bigsqcup_{x_0 \in S^1_r}  \{ (x_0 , \dots , x_n ) \in P_n(S^1_r) \mid x_j \in S^0_{s_j} (x_{j-1}) , \; j=1 , \dots , n \} ,
	\end{equation*}
	where $S^0_{s_j} (x_{j-1})$ is a geodesic sphere of radius $s_j$ centred at $x_{j-1}$, which just consists of two points, and $s_1 , \dots , s_n \in [0 , \pi r]$ satisfies $s_1 + \cdots + s_n = l$. A crucial point here is that each $s_k$ can never exceed $\pi r$, which is the diameter of $S^0_r$.
	
	Consider a direct product of $n+1$ circles $(S_r^1)^{n+1}$, and write $\theta_{j} \in [ 0, 2 \pi )$ for the coordinate for the $j$-th copy of $S_r^1$. Then we have
	\begin{equation*}
		\mathrm{d} \mu^{n+1} = r^{n+1} \mathrm{d} \theta_{0} \wedge \cdots \wedge \mathrm{d} \theta_{n} .
	\end{equation*}
	Substituting in $\mathrm{d} \theta_{1} = \frac{1}{r} \mathrm{d} l - \sum_{j=2}^{n} \mathrm{d} \theta_{j}$ and re-writing $r \theta_{j} = s_j$ as above, we find
	\begin{equation*}
		\mathrm{d} \mu^{n+1} = r \mathrm{d} \theta_{0} \wedge \mathrm{d} l \wedge  \mathrm{d} s_2 \wedge  \cdots \wedge \mathrm{d} s_n 
	\end{equation*}
	and hence
	\begin{equation*}
		\mathrm{d} \mu^{n+1}_l = r \mathrm{d} \theta_{0}  \cdot \mathrm{d} s_2 \cdots \mathrm{d} s_n
	\end{equation*}
	as a measure (as opposed to a volume form which could have a different sign). We thus have
	\begin{equation*}
		\mu^n_l (P^l_n (S^1_r)) = 2^{n+1} \pi r  \int_{0 \le l-s_2 - \dots - s_n \le \pi r, \; s_2 , \dots , s_n \in [0, \pi r]}  \mathrm{d} s_2 \cdots \mathrm{d} s_n 
	\end{equation*}
	where we note the factor $2^{n+1} = |S^0|^{n+1}$.

\begin{figure}[h!]
\centering
\begin{tikzpicture}[scale=3.5] 

    \draw[->, thin] (-0.1, 0) -- (1.2, 0) node[right] {$s_1$};
    \draw[->, thin] (0, -0.1) -- (0, 1.2) node[above] {$s_2$};

	\node at (1, -0.1)  {$\pi r$};
	\node at (-0.1, 1) {$\pi r$};
    
    \fill[gray!10] (0, 0) -- (1, 0) -- (1, 1) -- (0, 1) -- cycle;

    \draw[thin, black] (0, 0) -- (1, 0) -- (1, 1) -- (0, 1) -- cycle;

    \draw[line width=1.5pt] (0, 0.5) -- (0.5, 0);
    \node at (-0.5,0.3) {$l=s_1+s_2 < \pi r$};
    
    \draw[line width=1.5pt, dashed] (0, 1) -- (1, 0);

    \draw[line width=1.5pt] (0.5, 1) -- (1, 0.5);

    \node at (1.5, 0.7) {$l=s_1+s_2 > \pi r$};

\end{tikzpicture}
\caption{The length of the solid line changes non-differentiably at $l= \pi r$.}
\label{figsqcptcd}
\end{figure}

When $n=2$, the range of the integral above is precisely the intersection of the line $x+y=l$ and the unit cube as described in Figure \ref{figsqcptcd}, which  should be compared to Figure \ref{figsqhdmnc}. This area (or length) varies continuously in $l$, but not smoothly, since this length is clearly not differentiable at $l= \pi r$.

We compute this integral explicitly. When $n=1$, we simply get
\begin{equation*}
	\int_{S^1_r \times S^1_r} e^{- t \mathsf{d} (x_0, x_1)} r^2 \mathrm{d} \theta^{(0)} \mathrm{d} \theta^{(1)} = 4 \pi r \int_{0}^{\pi r} e^{-tl} \mathrm{d} l = 4 \pi r \frac{1 - e^{- \pi rt}}{t} = \frac{4 \pi r}{t} - \frac{4 \pi r}{t} e^{- \pi r t}.
\end{equation*}

When $n=2$, noting that $l$ is at most $2 \pi r$ in that case, we have
\begin{equation*}
	\mu^2_l (P^l_2 (S^1_r)) = 8 \pi r \int_{\max \{0, l - \pi r\}}^{\min \{ \pi r,l \}}  \mathrm{d} s_2 = \begin{cases}
		8 \pi r l &\quad (0 \le l \le \pi r ) \\
		8 \pi r (2 \pi r -l) &\quad ( \pi r \le l \le 2 \pi r)
	\end{cases}
\end{equation*}
which is the length of the solid line in Figure \ref{figsqcptcd} (up to a factor of $\sqrt{2}$ coming from the normalisation of $\mathrm{d} s$). Thus
\begin{align*}
	\int_0^{2 \pi r} e^{- tl} \mu^2_l (P^l_2 (S^1_r)) \mathrm{d} l &= 8 \pi r \int_0^{\pi r} e^{-tl} l \mathrm{d} l + 8 \pi r \int_{\pi r}^{2 \pi r} e^{-tl} (2 \pi r -l) \mathrm{d} l \\
	&= \frac{8 \pi r}{t^2} ( 1-2e^{- \pi rt} +e^{- 2 \pi r t } ). 
\end{align*}

When $n=3$, $l$ is at most $3 \pi r$. Noting $l - 2 \pi r \le s_2 = l-s_1-s_3 \le l$, we have
\begin{align*}
		\mu^3_l (P^l_3 (S^1_r)) &= 16 \pi r \int_{0 \le l-s_2 - s_3 \le \pi r, \; s_2 , s_3 \in [0, \pi r]}  \mathrm{d} s_2 \mathrm{d} s_3 \\
		&= 16 \pi r  \int_{\max \{0, l - 2 \pi r\}}^{\min \{ \pi r,l \}} \left( \int_{\max \{0, l -s_2 - \pi r\}}^{\min \{ \pi r, l - s_2 \}} \mathrm{d} s_3 \right) \mathrm{d} s_2  \\
		&= 16 \pi r  \int_{\max \{0, l - 2 \pi r\}}^{\min \{ \pi r,l \}} \left(  (l-s_2) \mathbbm{1}_{\{  0 \le l - s_2 \le \pi r \}} + (2 \pi r - l+s_2) \mathbbm{1}_{\{  l - s_2 > \pi r \}} \right) \mathrm{d} s_2 ,
\end{align*}
where $\mathbbm{1}$ is the indicator function. Direct computation yields
\begin{equation*}
	\int_{\max \{0, l - 2 \pi r\}}^{\min \{ \pi r,l \}} \left( \int_{\max \{0, l -s_2 - \pi r\}}^{\min \{ \pi r, l - s_2 \}} \mathrm{d} s_3 \right) \mathrm{d} s_2 = \begin{cases}
		l^2/2 &\quad (0 \le l \le \pi r ) \\
		3 \pi r l -l^2 - 3 \pi^2 r^2 /2 &\quad (\pi r \le l \le 2 \pi r ) \\
		( l^2 - 6 \pi r l + 9 \pi^2 r^2)/2 &\quad (2 \pi r \le l \le 3 \pi r )
	\end{cases}
\end{equation*}
which in turn implies
\begin{align*}
	&\int_0^{\infty} e^{- tl} \mu^3_l (P^l_3 (S^1_r)) \mathrm{d} l \\
	&=\pi r \int_0^{ \pi r} e^{- tl} l^2 \mathrm{d} l + \pi r \int_{\pi r}^{2 \pi r} e^{- tl} \left(6 \pi r l -2 l^2 - 3 \pi^2 r^2 \right) \mathrm{d} l + \pi r \int_{2 \pi r}^{3 \pi r} e^{- tl} \left(  l^2 - 6 \pi r l + 9 \pi^2 r^2 \right) \mathrm{d} l \\
	&=\frac{1}{t^3}+\left(\frac{1}{t^3}-\frac{2\pi r}{t^2}\right)e^{-\pi r t}+\left(\frac{12\pi^2 r^2}{t}+\frac{4\pi r}{t^2}-\frac{1}{t^3}\right)e^{-2\pi r t} +\left(-\frac{18\pi^2 r^2}{t}+\frac{6\pi r}{t^2}-\frac{1}{t^3}\right)e^{-3\pi r t}
\end{align*}
As in the case of 2-tori, we have exponentially decaying terms corresponding to the injectivity radius $\pi r$ (which is also the diameter of $S^1$). Furthermore, we have integer multiples of $\pi r$ up to $n=3$. These numbers precisely correspond to the degrees for which the magnitude homology for $S_r^1$ is non-trivial, as computed by Gomi \cite[page 4]{Gomi}.

\subsubsection{2-sphere with $N=2$} \label{ssc2shn2ex}
We proceed similarly for 2-spheres. Let $S^2_r$ be a $2$-sphere of radius $r>0$, endowed with a standard round metric.

As we did for circles, we write
	\begin{equation*}
		P_n^l (S^2_r) = \bigsqcup_{s_1+ \cdots + s_n=l} \bigsqcup_{x_0 \in S^2_r}  \{ (x_0 , \dots , x_n ) \in P_n(S^2_r) \mid x_j \in S^1_{s_j} (x_{j-1}) , \; j=1 , \dots , n \} ,
	\end{equation*}
	where $S^1_{s_j} (x_{j-1})$ is a geodesic sphere of radius $s_j$ centred at $x_{j-1}$, and $s_1 , \dots , s_n \in [0 , \pi r]$ satisfies $s_1 + \cdots + s_n = l$. Again, each $s_k$ can never exceed the diameter $\pi r$ which is also its injectivity radius. We compute the volume form $\mu_l^n$ for general $n$; the following computation easily generalises to spheres of any dimensions, with more complicated formulae. The volume form of $S^2_r$ is given by
	\begin{equation*}
		\mathrm{d}  \mu = r^{2} \sin ( \theta_1 )  \mathrm{d}  \theta_1 \wedge \mathrm{d}  \theta_{2}
	\end{equation*}
	where $\theta_1 , \theta_{2}$ are spherical polar coordinates, with $\theta_1 \in [0, \pi ]$ and $\theta_2  \in [0, 2 \pi )$. Thus
	\begin{equation*}
		\mathrm{d}  \mu^{n+1} = \pm r^{2(n+1)} \prod_{j=1}^{n+1} \sin ( \theta^{(j)}_1 ) d \theta^{(j)}_1 \wedge \cdots \wedge d \theta^{(j)}_{2}
	\end{equation*}
	where the upper indices in brackets correspond to the copies in the product set $(S^2_r)^{n+1}$, and the sign can be chosen to be $+$ by choosing an appropriate orientation for $(S^2_r)^{n+1}$. The constraint $s_1 + \cdots + s_n = l$ gives an equation $\sum_{j=2}^{n+1} \theta^{(j)}_1 = l/r$, defining a real hypersurface in $(S^2_r)^{n+1}$, which is smooth outside a set of measure zero. We thus get 
	\begin{equation*}
		\mathrm{d} \mu^{n+1} = \pm \left( r^{2(n+1)} \prod_{j=0}^{n} \sin ( \theta^{(j)}_1 ) \right) \mathrm{d} \theta^{(0)}_1 \wedge \mathrm{d} \theta^{(0)}_{2} \wedge \cdots \wedge \mathrm{d} \theta^{(n)}_1 \wedge \mathrm{d} \theta^{(n)}_{2} .
	\end{equation*}
	Substituting in $\mathrm{d} \theta_1^{(1)} = \frac{1}{r} \mathrm{d} l - \sum_{j=2}^{n} \mathrm{d} \theta_1^{(j)}$ and re-writing $r \theta_1^{(j)} = s_j$, we find
	\begin{align*}
		\mathrm{d} \mu^{n+1} = &\pm \left( r^{n+2} \sin ( \theta^{(0)}_1 ) \sin \left( \frac{1}{r} \left( l-\sum_{j=2}^n s_j \right) \right) \prod_{j=2}^{n} \sin \left( \frac{s_j}{r} \right) \right) \\
		&\quad \times \mathrm{d} \theta^{(0)}_1 \wedge \mathrm{d} \theta^{(0)}_{2} \wedge \mathrm{d} l \wedge \mathrm{d} \theta^{(1)}_{2} \wedge \mathrm{d} s_2 \wedge \mathrm{d} \theta^{(2)}_{2} \wedge \cdots \wedge \mathrm{d} s_n \wedge \mathrm{d} \theta^{(n)}_{2},
	\end{align*}
	and hence
	\begin{align*}
		\mathrm{d} \mu^{n+1}_l = &\left( r^{n+2} \sin ( \theta^{(0)}_1 ) \sin \left( \frac{1}{r} \left( l-\sum_{j=2}^n s_j \right) \right) \prod_{j=2}^{n} \sin \left( \frac{s_j}{r} \right) \right) \\
		&\quad \times \mathrm{d} \theta^{(0)}_1  \mathrm{d} \theta^{(0)}_{2} \mathrm{d} \theta^{(1)}_{2}  \mathrm{d} \theta^{(2)}_{2} \cdots \mathrm{d} \theta^{(n)}_{2} \cdot \mathrm{d} s_2 \cdots \mathrm{d} s_n
	\end{align*}
	as a measure (or by fixing the orientation). We thus have, noting $\int_0^{\pi} \sin ( \theta^{(0)}_1 ) \mathrm{d} \theta^{(0)}_1 = 2$,
	\begin{align*}
		&\mu^n_l (P^l_n (S^2_r)) \\
		&= 2 (2 \pi)^{n+1} r^{n+2} \int_{0 \le l-s_2 - \dots - s_n \le \pi r, \; s_2 , \dots , s_n \in [0, \pi r]} \sin \left( \frac{1}{r} \left( l-\sum_{j=2}^n s_j \right) \right) \prod_{j=2}^{n} \sin \left( \frac{s_j}{r} \right)  \mathrm{d} s_2 \cdots \mathrm{d} s_n .
	\end{align*}
	The last integral can be equally written by using
	\begin{equation*}
		Q_{n,l}:= \{ (s_1 , \dots , s_n ) \in [0 , \pi r ]^n \mid s_1 + \cdots + s_n = l \} \subset \rl^n,
	\end{equation*}
	as
	\begin{equation*}
		\mu^n_l (P^l_n (S^2_r)) = 2 (2 \pi)^{n+1} r^{n+2} \int_{Q_{n,l}} \sin \left( \frac{s_1}{r} \right) \cdots \sin \left( \frac{s_n}{r} \right) \mathrm{d} s
	\end{equation*}
	where $\mathrm{d} s$ is the $(n-1)$-dimensional Hausdorff measure on $Q_{n,l}$ induced by the Euclidean metric.

For $n=1$, we have
\begin{equation*}
	\int_0^{\infty} e^{-tl} \mu^1_l (P^l_1 (S^2_r)) \mathrm{d} l = 2 (2 \pi)^{2} r^{3} \int_{0}^{\pi r} e^{-tl} \sin \left( \frac{l}{r} \right) \mathrm{d} l = 2 (2 \pi)^{2} r^{4} \frac{1+e^{- \pi r t}}{t^2r^2+1} .
\end{equation*}

For $n=2$, we similarly get
\begin{align*}
	\mu^2_l (P^l_2 (S^2_r)) &= 2 (2 \pi)^{3} r^{4} \int_{\max \{0, l - \pi r\}}^{\min \{ \pi r,l \}} \sin \left( \frac{l-s}{r} \right) \sin \left( \frac{s}{r} \right) \mathrm{d} s \\
	&=\begin{cases}
		(2 \pi)^{3} r^{4} \left( l \cos \left( \frac{l}{r} \right) + r \sin \left( \frac{l}{r} \right) \right) &\quad (0 \le l \le \pi r) \\
		(2 \pi)^{3} r^{4} \left( (2 \pi r -l ) \cos \left( \frac{l}{r} \right) + r \sin \left( \frac{2 \pi r - l}{r} \right) \right)  &\quad ( \pi r \le l \le 2 \pi r)
	\end{cases}
\end{align*}
and hence direct computation yields
\begin{align*}
	& \int_0^{\infty} e^{-tl} \mu^2_l (P^l_2 (S^2_r)) \mathrm{d} l \\
	&= \frac{8 \pi^3 r^4}{1+r^2t^2} \left( \pi r^{2} e^{- \pi rt}+r^{2}(1+e^{- \pi rt})+ \frac{2r^{3}t(1+e^{- \pi rt})+2r^{4}t^{2}(e^{- \pi rt}+e^{- 2 \pi rt})}{1+r^2t^2} \right)
\end{align*}
As in the case of circles, we have exponentially decaying terms, where integer multiples of $\pi r$ appear up to $n=2$ in the exponent.

\subsubsection{Closed bounded intervals} \label{sccbilm}

We finally discuss the line segment $[a,b] \subset \rl$ of length $L := b-a$. We compute the $(\mu , \Gamma^{\mathrm{triv}})$-magnitude with respect to the Lebesgue measure $\mu$, as opposed to the weight measure treated in \S \ref{sccbism}. Note that the Lebesgue measure is the uniform measure in the sense of \cite[Definition 9.2 and Proposition 9.5]{LR21}.

Since $\mu ([a,b]) = b-a =:L$, the magnitude is given by
\begin{equation*}
	\mathrm{Mag} ([a,b] , t \mathsf{d} , \mu ) = L + \sum_{n=1}^{\infty} (-1)^n \int_{[a,b]^{n+1}} e^{- t \sum_{k=1}^n |x_{k-1}-x_k|} \mathrm{d} \mu^{n+1} ,
\end{equation*}
which converges for $t \gg 1$ as proved in Theorem \ref{ppexmgpmlt}.

\begin{remark}
Since $\mu$ is the Lebesgue measure, we have
\begin{equation*}
	\mathrm{Mag} ([a,b] , t \mathsf{d} , \mu ) = \mathrm{Mag} ((a,b) , t \mathsf{d} , \mu ),
\end{equation*}
and hence we may equally compute the magnitude of the open interval, where we note that $(a,b)$ is not complete. This phenomenon happens since $\mu$ has no singular part, and it needs to be compared to the example discussed in \S \ref{sccbism} where the weight measure has a singular part at the boundary.
\end{remark}

We compute
\begin{equation*}
	\int_{[a,b]^{n+1}} e^{- t \sum_{k=1}^n |x_{k-1}-x_k|} \mathrm{d} \mu^{n+1} ,
\end{equation*}
by using auxiliary integrals
\begin{align*}
	\alpha_{n-1,k}(t) &:=\int_{[a,b]^{n-1}} e^{- t \sum_{k=1}^{n-1} |x_{k-1}-x_k|}  \frac{(b-x_{n-1})^k}{k!} e^{-t(b-x_{n-1})} \mathrm{d} \mu^{n-1} \\
	&\quad + \int_{[a,b]^{n-1}} e^{- t \sum_{k=1}^{n-1} |x_{k-1}-x_k|} \frac{(x_{n-1}-a)^k}{k!}e^{-t(x_{n-1}-a)} \mathrm{d} \mu^{n-1}
\end{align*}
defined for each non-negative integer $k$ and $n \ge 2$.

\begin{lemma} \label{lmmgcbinv}
	We have
	\begin{equation} \label{eqlmcsinpm}
		\int_{[a,b]^{n+1}} e^{- t \sum_{k=1}^n |x_{k-1}-x_k|} \mathrm{d} \mu^{n+1} = \frac{2^{n}}{t^{n}}L - \frac{2^n}{t^{n+1}}(1-e^{-tL}) - \sum_{m=1}^{n-1} \frac{2^{m-1}}{t^m} \alpha_{n-m,0} (t)
	\end{equation}
	where $\alpha_{n-m,0} (t)$ can be computed by the recursion relationship
	\begin{equation} \label{eqlmcsinrr}
		\alpha_{n-1,k}(t)= \sum_{m=0}^{k+1} \frac{1}{(2t)^{m}} \alpha_{n-2,k-m+1} (t) - \alpha_{n-2,0} (t) \sum_{k=0}^m \frac{L^{k-m} e^{-tL}}{(2t)^{m+1} (k-m)!} ,
	\end{equation}
	which holds for any $n \ge 3$ and any $k \ge 0$, and
	\begin{align}
		{\alpha}_{1,k} (t) &= - \sum_{m=0}^{k+1} \frac{2e^{-tL}}{(2t)^{m}} \sum_{j=0}^{k-m+1} \frac{L^{k-m+1-j}}{t^{j+1}(k-m+1-j)!} \notag \\
		&\quad  -  \sum_{m=0}^{k} \frac{4e^{-tL} L^{k-m} }{(2t)^{m+2} (k-m)!} ( 1-e^{-tL} ) + \frac{2^{k+2}-2}{(2t)^{k+2}} (1-e^{-tL}) . \label{eqlmcsinic}
	\end{align}
\end{lemma}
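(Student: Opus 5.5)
The plan is to integrate out the variables one at a time from the end of the chain, starting with $x_n$, and to record the boundary contributions through the auxiliary integrals $\alpha_{n-1,k}$. The basic one-variable computation is, for fixed $y\in[a,b]$,
\begin{equation*}
	\int_a^b e^{-t|x-y|}\,\mathrm{d}x = \frac{2}{t} - \frac{1}{t}e^{-t(b-y)} - \frac{1}{t}e^{-t(y-a)} .
\end{equation*}
Applying this to the innermost integral over $x_n$ with $y=x_{n-1}$ gives
\begin{equation*}
	\int_{[a,b]^{n+1}} e^{-t\sum_{k=1}^n|x_{k-1}-x_k|}\,\mathrm{d}\mu^{n+1} = \frac{2}{t}\int_{[a,b]^{n}} e^{-t\sum_{k=1}^{n-1}|x_{k-1}-x_k|}\,\mathrm{d}\mu^{n} - \frac{1}{t}\alpha_{n-1,0}(t) .
\end{equation*}
Writing $I_n$ for the left-hand side, this is the recursion $I_n = \frac{2}{t}I_{n-1} - \frac{1}{t}\alpha_{n-1,0}$. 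Iterating it down to $I_1 = \frac{2}{t}L - \frac{2}{t^2}(1-e^{-tL})$, computed directly, should produce (\ref{eqlmcsinpm}). The prefactors $2^{m-1}/t^m$ arise from the geometric accumulation of the factor $2/t$. Along the way I will check the index convention, i.e.\ that $\alpha_{n-m,0}$ with $m=1,\dots,n-1$ matches the terms produced at each step.

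For the recursion (\ref{eqlmcsinrr}), I treat $\alpha_{n-1,k}$ in the same way, integrating out its last variable $x_{n-1}$ against $e^{-t|x_{n-2}-x_{n-1}|}\frac{(b-x_{n-1})^k}{k!}e^{-t(b-x_{n-1})}$, plus the mirror term. Fix $y=x_{n-2}$ and split the integral at $y$.
\begin{enumerate}
	\item On $x>y$ the exponent is $-t(x-y)-t(b-x) = -t(b-y)$, independent of $x$. Integrating the polynomial weight then gives $\frac{(b-y)^{k+1}}{(k+1)!}e^{-t(b-y)}$, which accounts for the $m=0$ term $\alpha_{n-2,k+1}$.
	\item On $x<y$ the exponent is $-t(y-x)-t(b-x)$, so the integrand involves $e^{2tx}(b-x)^k$. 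Repeated integration by parts produces the series $\sum_m (2t)^{-m}\frac{(b-y)^{k-m+1}}{(k-m+1)!}e^{-t(b-y)}$, together with boundary terms at $x=a$. These boundary terms carry the factor $e^{-t(y-a)}e^{-tL}L^{j}/j!$, which reassembles into $\alpha_{n-2,0}$ times the second sum in (\ref{eqlmcsinrr}).
\end{enumerate}
The mirror term, with the roles of $a$ and $b$ swapped, is handled symmetrically, and combining the two halves gives exactly the definition of $\alpha_{n-2,\cdot}$.

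The initial value (\ref{eqlmcsinic}) for $\alpha_{1,k}$ follows by the same one-variable computation applied to $\int_a^b\int_a^b e^{-t|x_0-x_1|}\frac{(b-x_1)^k}{k!}e^{-t(b-x_1)}\,\mathrm{d}x_0\,\mathrm{d}x_1$ plus its mirror. I integrate $x_0$ first, using the formula above, and then integrate terms of the form $(b-x_1)^k e^{-ct(b-x_1)}$ for $c\in\{0,1,2\}$ explicitly via truncated exponential sums.

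The main obstacle is bookkeeping rather than any idea. I expect it to lie in the integration-by-parts step for $\alpha_{n-1,k}$: tracking the factorials and the powers of $2t$, and making sure the boundary contributions at the far endpoint genuinely reduce to $\alpha_{n-2,0}$ rather than to some other auxiliary quantity. I will verify the formulas on $n=2,3$ with $k=0$ against the direct computation to fix the signs and the index ranges.
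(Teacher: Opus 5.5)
Your plan for (\ref{eqlmcsinpm}) and (\ref{eqlmcsinrr}) is the paper's argument, and it is correct. You integrate out $x_n$ to get $I_n=\frac{2}{t}I_{n-1}-\frac{1}{t}\alpha_{n-1,0}$ and iterate down to $I_1$. For the recursion you split the $x_{n-1}$-integral at $x_{n-2}$: the flat region gives the $m=0$ term, integration by parts on $e^{2tx}(b-x)^k$ gives the remaining terms, and the boundary terms at the far endpoint assemble into $\alpha_{n-2,0}$. Two misprints in the statement need correcting for this to match: the inner sum $\sum_{k=0}^{m}$ in (\ref{eqlmcsinrr}) should be $\sum_{m=0}^{k}$, and the domain in the definition of $\alpha_{n-1,k}$ should be $[a,b]^{n}$.

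The step that fails is the last one. As printed, (\ref{eqlmcsinic}) is false, so no correct computation can produce it. Your route to $\alpha_{1,k}$ integrates $x_0$ first and leaves integrals of $\frac{y^k}{k!}e^{-cty}$ with $c\in\{0,1,2\}$; the paper instead integrates $x_1$ first via the recursion step. Your route is sound and gives
\[
\alpha_{1,k}(t)=\frac{4}{t^{k+2}}\Bigl(1-e^{-tL}\sum_{j=0}^{k}\frac{(tL)^j}{j!}\Bigr)-\frac{1}{2^{k}t^{k+2}}\Bigl(1-e^{-2tL}\sum_{j=0}^{k}\frac{(2tL)^j}{j!}\Bigr)-\frac{2L^{k+1}}{(k+1)!\,t}\,e^{-tL}.
\]
At $k=0$ this is $\frac{3}{t^2}-\frac{2L}{t}e^{-tL}-\frac{4}{t^2}e^{-tL}+\frac{1}{t^2}e^{-2tL}$. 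The printed (\ref{eqlmcsinic}) instead gives $\frac{1}{2t^2}-\frac{2L}{t}e^{-tL}-\frac{9}{2t^2}e^{-tL}+\frac{1}{t^2}e^{-2tL}$.

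There is also a check that does not depend on how $\alpha$ is defined. For $n=2$, direct computation gives
\[
\int_a^b\Bigl(\int_a^b e^{-t|x-y|}\,\mathrm{d}x\Bigr)^2\mathrm{d}y=\frac{4L}{t^2}-\frac{7}{t^3}+(\text{terms carrying } e^{-tL}),
\]
whereas (\ref{eqlmcsinpm}) combined with (\ref{eqlmcsinic}) gives $\frac{4L}{t^2}-\frac{9}{2t^3}+\cdots$.

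The paper's own derivation contains two slips.
\begin{itemize}
\item Its formula for $\int_a^b\frac{(x-a)^p}{p!}e^{-t(x-a)}\,\mathrm{d}x$ counts the $j=p$ term twice; for $p=0$ it returns $\frac{1}{t}-\frac{2}{t}e^{-tL}$ instead of $\frac{1}{t}(1-e^{-tL})$.
\item The sum $\sum_{m=0}^{k+1}\frac{2}{(2t)^m t^{k+2-m}}$ equals $\frac{2^{k+4}-4}{(2t)^{k+2}}$, not $\frac{2^{k+2}-2}{(2t)^{k+2}}$.
\end{itemize}
The correct identity in the paper's format is (\ref{eqlmcsinic}) with its last term replaced by $\frac{2^{k+4}-4}{(2t)^{k+2}}$, with no factor $1-e^{-tL}$. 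I checked this against the closed form above at $k=0$ and $k=1$. Your planned $k=0$ sanity check will expose the discrepancy, so you should state and prove this corrected formula. The first two identities are unaffected.
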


\begin{proof}
We first integrate with respect to the last variable $x_n$ to get
\begin{align*}
	&\int_{[a,b]^{n+1}} e^{- t \sum_{k=1}^n |x_{k-1}-x_k|} \mathrm{d} \mu^{n+1} \\
	&= \int_{[a,b]^{n}} e^{- t \sum_{k=1}^{n-1} |x_{k-1}-x_k|} \left( \int^b_{x_{n-1}} e^{- t (x_n - x_{n-1} )} \mathrm{d} x_n + \int_a^{x_{n-1}} e^{- t (x_{n-1}-x_n)} \mathrm{d} x_n \right) \mathrm{d} \mu^{n} \\
	&=\frac{2}{t} \int_{[a,b]^{n}} e^{- t \sum_{k=1}^{n-1} |x_{k-1}-x_k|} \mathrm{d} \mu^{n} - \frac{\alpha_{n-1,0} (t)}{t}  
\end{align*}
Thus, together with the elementary integral
\begin{equation} \label{eqfsmglsl}
	\int_a^b \int_a^b e^{- t |x-y|} \mathrm{d} x \mathrm{d} y = \frac{2L}{t} - \frac{2}{t^2}(1-e^{-tL}),
\end{equation}
we find the claimed equality (\ref{eqlmcsinpm}). It remains to prove the equalities (\ref{eqlmcsinrr}) and (\ref{eqlmcsinic}). We first compute the following integral
\begin{equation*}
	\int_a^b \left( \frac{(b-x_{n-1})^k}{k!} e^{-t|x_{n-2}-x_{n-1}|-t(b-x_{n-1})} + \frac{(x_{n-1}-a)^k}{k!}e^{-t|x_{n-2}-x_{n-1}|-t(x_{n-1}-a)} \right) \mathrm{d} x_{n-1}.
\end{equation*}
By repeated applications of integration by parts, we find
\begin{align*}
	&\int_a^b \frac{(b-x_{n-1})^k}{k!} e^{-t|x_{n-2}-x_{n-1}|-t(b-x_{n-1})}  \mathrm{d} x_{n-1} \\
	&=\int_a^{x_{n-2}} \frac{(b-x_{n-1})^k}{k!} e^{-t(b+x_{n-2})+2tx_{n-1}}  \mathrm{d} x_{n-1} + e^{-t(b-x_{n-2})} \int_{x_{n-2}}^b \frac{(b-x_{n-1})^k}{k!}   \mathrm{d} x_{n-1} \\
	&= \sum_{m=0}^{k} \left( \frac{1}{(2t)^{m+1}} \frac{(b-x_{n-2})^{k-m}}{(k-m)!} e^{-t(b-x_{n-2})} - \frac{1}{(2t)^{m+1}} \frac{L^{k-m}}{(k-m)!} e^{-t(x_{n-2}-a)-tL} \right) \\
	&\quad +  e^{-t(b-x_{n-2})} \frac{(b-x_{n-2})^{k+1}}{(k+1)!} ,
\end{align*}
and similarly
\begin{align*}
	&\int_a^b \frac{(x_{n-1}-a)^k}{k!} e^{-t|x_{n-2}-x_{n-1}|-t(x_{n-1}-a)}  \mathrm{d} x_{n-1} \\
	&= e^{-t(x_{n-2}-a)} \frac{(x_{n-2}-a)^{k+1}}{(k+1)!} \\
	&\quad  + \sum_{m=0}^{k} \left( \frac{1}{(2t)^{m+1}} \frac{(x_{n-2}-a)^{k-m}}{(k-m)!} e^{-t(x_{n-2}-a)} - \frac{1}{(2t)^{m+1}} \frac{L^{k-m}}{(k-m)!} e^{-t(b-x_{n-2})-tL} \right) .
\end{align*}
We thus get
\begin{equation*}
	\alpha_{n-1,k}(t)= \sum_{m=0}^{k+1} \frac{1}{(2t)^{m}} \alpha_{n-2,k-m+1} (t) - \alpha_{n-2,0} (t) \sum_{k=0}^m \frac{L^{k-m} e^{-tL}}{(2t)^{m+1} (k-m)!} 
\end{equation*}
which is the claimed relationship (\ref{eqlmcsinrr}). The initial term $\alpha_{1,k} (t)$ can be computed directly as
\begin{align*}
	\alpha_{1,k} (t) &= \int_a^b   \int_a^b \left(\frac{(b-y)^k}{k!} e^{-t|x-y|-t(b-y)} + \frac{(y-a)^k}{k!}e^{-t|x-y|-t(y-a)} \right) \mathrm{d} y  \mathrm{d} x  \\
	&= \sum_{m=0}^{k+1} \frac{1}{(2t)^{m}} \int_a^b  \left( \frac{(x -a)^{k-m+1}}{(k-m+1)!} e^{-t(x -a)} + \frac{(b-x)^{k-m+1}}{(k-m+1)!} e^{-t(b-x)} \right) \mathrm{d} x \\
	&\quad - \sum_{m=0}^{k} \frac{L^{k-m} e^{-tL}}{(2t)^{m+1} (k-m)!} \int_a^b \left(   e^{-t(b-x)} + e^{-t(x-a)} \right) \mathrm{d} x .
\end{align*}
Noting
\begin{equation*}
	\int_a^b \frac{(x -a)^{k-m+1}}{(k-m+1)!} e^{-t(x -a)} \mathrm{d} x =- \sum_{j=0}^{k-m+1} \frac{L^{k-m+1-j}e^{-tL}}{t^{j+1}(k-m+1-j)!} + \frac{1}{t^{k-m+2}} (1-e^{-tL})
\end{equation*}
and
\begin{equation*}
	\int_a^b  \frac{(b-x)^{k-m+1}}{(k-m+1)!} e^{-t(b-x)} \mathrm{d} x  = - \sum_{j=0}^{k-m+1} \frac{L^{k-m+1-j}e^{-tL}}{t^{j+1}(k-m+1-j)!} + \frac{1}{t^{k-m+2}} (1-e^{-tL})
\end{equation*}
gives the claimed equality (\ref{eqlmcsinic}).
\end{proof}

Setting $L:= b-a$ and recalling the equality (\ref{eqfsmglsl}), the first partial magnitude function can be computed as
\begin{equation*}
	\mathrm{Mag} ([a,b] , t \mathsf{d} , \mathrm{d} x ;1) = L - \frac{2L}{t} + \frac{2}{t^2}(1-e^{-tL}).
\end{equation*}

The expansion above contains a term of order $e^{-tL}$, coming from the boundary of $[a,b]$, while the magnitude homology of $[a,b]$ is trivial since it is (Menger) convex \cite{Gomi,LeiShu21}. Compared to other examples in this section, the phenomenon above seems to indicate that an extra care is necessary to deal with manifolds with boundaries.

\section{Metric spaces with weight measures} \label{scmswms}

\subsection{Magnitude equals volume for a weight measure}

A strategy for extending the magnitude for infinite metric spaces, other than taking the supremum over finite subspaces (\ref{eqdfclmg}), is to consider the volume with respect to the weight measure \cite{Mec13,Willerton}.

\begin{definition}
Let $(X , \mathsf{d})$ be a metric space. A Borel measure $\mu_{\mathrm{w}}$ is called a \textbf{weight measure} if it satisfies
\begin{equation*}
	\int_{x \in X} e^{- \mathsf{d} (x,y)} \mathrm{d}\mu_{\mathrm{w}} = 1
\end{equation*}
for any $y \in X$. 
\end{definition}

In the literature (e.g.~\cite{Mec13,LeiWil}), $\mu_{\mathrm{w}}$ is often allowed to be a signed measure, but we only consider positive Radon measures in what follows just to avoid further technicalities. The weight measure is not unique, but it does not matter as proved in \cite[\S 2.1]{Willerton}.

\begin{remark}
	We observe $\mu_{\mathrm{w}}(X) >1$ when $X$ contains more than two points. In particular, the weight measure cannot be a probability measure for any non-trivial metric space.
\end{remark}

We compute the $(\mu , \Gamma^{\mathrm{triv}})$-magnitude when we take $\mu$ to be a weight measure. The result is as follows.

\begin{theorem} \label{thmgwgtms}
	Let $(X , \mathsf{d})$ be a geodesic metric space which admits a weight measure $\mu_{\mathrm{w}}$, and assume that it puts no mass on non-proper chains and the total volume $\mu_{\mathrm{w}} (X)$ is finite.  Then, we can compute each partial magnitude explicitly as
\begin{equation*}
		\mathrm{Mag} (X , \mathsf{d} , \mu_{\mathrm{w}}, \Gamma^{\mathrm{triv}}; N) = \begin{cases}
			\mu_{\mathrm{w}} (X) &\quad \text{ if $N$ is even} \\
			0 &\quad \text{ if $N$ is odd .}
		\end{cases}
\end{equation*}
In particular, we have
\begin{equation*}
	\lim_{N \to \infty} \mathrm{Mag} (X , \mathsf{d} , \mu_{\mathrm{w}}, \Gamma^{\mathrm{triv}} ; 2N) = \mu_{\mathrm{w}} (X).
\end{equation*}
\end{theorem}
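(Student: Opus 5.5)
The plan is to reduce each summand of the partial magnitude to a power of the total volume, by integrating out one endpoint at a time using the defining property of the weight measure.

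First, since $(X,\mathsf{d})$ is geodesic and $\mu_{\mathrm{w}}$ puts no mass on non-proper chains, Theorem \ref{thgmsnmnpc} applies. (Its argument uses only $\Gamma^{\mathrm{triv}}_{\bm{x}}(\Omega_{\bm{x}})=1$ on $P_n(X)$ and the vanishing of $\mu_{\mathrm{w}}^{n+1}$ on $X^{n+1}\setminus P_n(X)$, so I will only need $\mu_{\mathrm{w}}$ to be a positive Radon measure with finite total mass.) It gives
\begin{equation*}
\mathrm{Mag} (X , \mathsf{d} , \mu_{\mathrm{w}}, \Gamma^{\mathrm{triv}}; N) = \mu_{\mathrm{w}}(X) + \sum_{n=1}^N (-1)^n I_n, \qquad I_n := \int_{X^{n+1}} e^{-\sum_{k=1}^n \mathsf{d}(x_{k-1},x_k)} \mathrm{d}\mu_{\mathrm{w}}^{n+1}.
\end{equation*}

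The key step is to show $I_n = \mu_{\mathrm{w}}(X)$ for every $n \ge 1$. The integrand is nonnegative and measurable, so Tonelli's theorem lets me integrate first in $x_n$. By the weight measure property applied with $y = x_{n-1}$,
\begin{equation*}
\int_{x_n\in X} e^{-\mathsf{d}(x_{n-1},x_n)}\mathrm{d}\mu_{\mathrm{w}} = 1 .
\end{equation*}
Hence $I_n = I_{n-1}$, where $I_0 := \mu_{\mathrm{w}}(X)$. Induction then gives $I_n = \mu_{\mathrm{w}}(X)$, which is finite by hypothesis, so each term is integrable as Definition \ref{dfnptmg} requires.

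It remains to sum the alternating series:
\begin{equation*}
\mathrm{Mag} (X , \mathsf{d} , \mu_{\mathrm{w}}, \Gamma^{\mathrm{triv}}; N) = \mu_{\mathrm{w}}(X)\sum_{n=0}^N (-1)^n .
\end{equation*}
The partial sum $\sum_{n=0}^N(-1)^n$ equals $1$ for even $N$ and $0$ for odd $N$, which gives the stated cases. The limit along even $N$ is then constant and equal to $\mu_{\mathrm{w}}(X)$.

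There is no real obstacle in this argument. The only points needing care are the justification of the Fubini–Tonelli step, which holds since the integrand is nonnegative and $\mu_{\mathrm{w}}$ is $\sigma$-finite, and checking that the reduction of Theorem \ref{thgmsnmnpc} does not use the probability or Radon hypotheses beyond what is assumed here.
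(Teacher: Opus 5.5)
Your proposal is correct and follows the paper's proof: apply Theorem \ref{thgmsnmnpc} to pass to integrals over $X^{n+1}$, use the weight-measure identity to show that each such integral equals $\mu_{\mathrm{w}}(X)$, and then sum the alternating series. Integrating out one endpoint at a time via Tonelli is just a more careful version of the paper's one-line product-of-integrals computation.
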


\begin{proof}
	We use the formula in Theorem \ref{thgmsnmnpc}. We compute, for each $n \ge 1$,
	\begin{align*}
		\int_{{X}^{n+1}}  e^{-\sum_{k=1}^n \mathsf{d}(x_{k}, x_{k+1})} \mathrm{d} \mu_{\mathrm{w}}^{n+1} &= \int_X \mathrm{d} \mu_{\mathrm{w}} \prod_{k=1}^n \int_X e^{ - \mathsf{d} (x_{k-1} , x_k) } \mathrm{d} \mu_{\mathrm{w}} \\
		&= \mu_{\mathrm{w}} (X)
	\end{align*}
	by the definition of the weight measure. 
\end{proof}

Leinster--Roff \cite[Corollary 6.4]{LR21} proved the existence of a \textbf{balanced measure} on any non-empty compact metric space, which is closely related to the weight measure \cite[Lemma 5.6]{LR21}. The defining property of the balanced measure is that
\begin{equation} \label{eqdfblmsb}
	\int_{x \in X} e^{- \mathsf{d} (x,y)} \mathrm{d} \mu_{\mathrm{b}} = c_{\mathrm{b}}
\end{equation}
holds for some constant $c_{\mathrm{b}} >0$ for any $y \in \mathrm{supp} (\mu_{\mathrm{b}})$. When $\mu_{\mathrm{b}}$ is a probability measure whose support consists of at least two points, we have $0 < c_{\mathrm{b}} < 1$. In this case, exactly the same argument as in the proof of Theorem \ref{thmgwgtms} immediately implies the following.

\begin{corollary} \label{thmgbgtms}
	Let $(X , \mathsf{d})$ be a geodesic metric space which admits a balanced probability measure $\mu_{\mathrm{b}}$, satisfying (\ref{eqdfblmsb}), and assume that it puts no mass on non-proper chains. Then, we have
\begin{equation*}
		\mathrm{Mag} (X , \mathsf{d} , \mu_{\mathrm{b}}, \Gamma^{\mathrm{triv}}) =\frac{1}{1+c_{\mathrm{b}}}.
\end{equation*}
\end{corollary}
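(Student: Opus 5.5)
The plan is to copy the proof of Theorem \ref{thmgwgtms}, with the constant $1$ replaced by $c_{\mathrm{b}}$, and then sum a geometric series. Since $(X,\mathsf{d})$ is geodesic and $\mu_{\mathrm{b}}$ puts no mass on non-proper chains, Theorem \ref{thgmsnmnpc} lets us write, for every $N$,
\begin{equation*}
\mathrm{Mag}(X,\mathsf{d},\mu_{\mathrm{b}},\Gamma^{\mathrm{triv}};N) = \mu_{\mathrm{b}}(X) + \sum_{n=1}^{N} (-1)^n \int_{X^{n+1}} e^{-\sum_{k=1}^n \mathsf{d}(x_{k-1},x_k)} \mathrm{d}\mu_{\mathrm{b}}^{n+1}.
\end{equation*}
Here $\mu_{\mathrm{b}}(X)=1$ because $\mu_{\mathrm{b}}$ is a probability measure.

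Next I evaluate the $n$-th integral by Fubini/Tonelli; the integrand is nonnegative and bounded, so this is legitimate. I integrate first in the last variable $x_n$, which gives $\int_X e^{-\mathsf{d}(x_{n-1},x_n)}\,\mathrm{d}\mu_{\mathrm{b}}(x_n)$. By (\ref{eqdfblmsb}) this equals $c_{\mathrm{b}}$ whenever $x_{n-1}\in\mathrm{supp}(\mu_{\mathrm{b}})$.

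This is the only point needing care. The balancing identity is only assumed on the support, not on all of $X$. However, $x_{n-1}$ is itself integrated against $\mu_{\mathrm{b}}$, so it suffices that $\mu_{\mathrm{b}}(X\setminus\mathrm{supp}(\mu_{\mathrm{b}}))=0$. This holds for a Radon measure on a metric space such as the one considered here (if needed, assume separability, as in Leinster--Roff's compact setting). Hence the inner integral may be replaced by the constant $c_{\mathrm{b}}$ for $\mu_{\mathrm{b}}$-a.e.\ $x_{n-1}$.

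Iterating this $n$ times, I peel off $x_n, x_{n-1},\dots,x_1$ in turn. Each step contributes a factor $c_{\mathrm{b}}$, and at the end $\mu_{\mathrm{b}}(X)=1$ remains, so the $n$-th integral equals $c_{\mathrm{b}}^n$. Therefore
\begin{equation*}
\mathrm{Mag}(X,\mathsf{d},\mu_{\mathrm{b}},\Gamma^{\mathrm{triv}};N)=\sum_{n=0}^{N}(-c_{\mathrm{b}})^n .
\end{equation*}

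Finally, I use $0<c_{\mathrm{b}}<1$, noted before the statement for a probability measure whose support has at least two points. With this, the partial sums converge as $N\to\infty$ to $1/(1+c_{\mathrm{b}})$, which proves the claim. In the degenerate case where the support is a singleton, $c_{\mathrm{b}}=1$ and the series does not converge. I would mention this explicitly as the implicit hypothesis under which the corollary is meant.
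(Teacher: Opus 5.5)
Your proposal is correct and follows the paper's proof. You reduce via Theorem \ref{thgmsnmnpc}, integrate out the variables one at a time using (\ref{eqdfblmsb}) on $\mathrm{supp}(\mu_{\mathrm{b}})$ to get $c_{\mathrm{b}}^n$, and sum the geometric series; the paper writes this step as a restriction to $\mathrm{supp}(\mu_{\mathrm{b}})^{n+1}$, implicitly using $\mu_{\mathrm{b}}(X\setminus\mathrm{supp}(\mu_{\mathrm{b}}))=0$, which you make explicit. Also, the singleton-support case you flag is already excluded by the hypotheses: putting no mass on non-proper chains gives $\mu_{\mathrm{b}}^2(\{x_0=x_1\})=0$, so $\mu_{\mathrm{b}}$ is atomless and cannot be a Dirac mass.
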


\begin{proof}
	Arguing as in the proof of Theorem \ref{thmgwgtms}, for each $n \ge 1$ we have
	\begin{align*}
		\int_{{X}^{n+1}}  e^{-\sum_{k=1}^n \mathsf{d}(x_{k}, x_{k+1})} \mathrm{d} \mu_{\mathrm{b}}^{n+1} &= \int_{\mathrm{supp} (\mu_{\mathrm{b}})^{n+1}} e^{ -  \sum_{k=1}^n \mathsf{d} (x_{k-1} , x_k)} \mathrm{d} \mu_{\mathrm{b}}^{n+1} \\
		&= \int_{\mathrm{supp} (\mu_{\mathrm{b}})} \mathrm{d} \mu_{\mathrm{b}} \prod_{k=1}^n \int_{\mathrm{supp} (\mu_{\mathrm{b}})} e^{ - \mathsf{d} (x_{k-1} , x_k) } \mathrm{d} \mu_{\mathrm{b}} \\
		&= c_{\mathrm{b}}^{n} .
	\end{align*}
	Since $0<c_{\mathrm{b}}<1$, we have
	\begin{equation*}
		\mathrm{Mag} (X , \mathsf{d} , \mu_{\mathrm{b}}, \Gamma^{\mathrm{triv}}) = 1+\sum_{n=1}^{\infty} (-1)^n c_{\mathrm{b}}^{n} = \frac{1}{1+c_{\mathrm{b}}}
	\end{equation*}
	as required.
\end{proof}

Computation as above also gives the following result, which guarantees the convergence (as opposed to the convergence of even partial sums) of the magnitude by introducing an auxiliary regularisation parameter $0 < c < 1$.

\begin{corollary} \label{crmwmrgpc}
	Let $(X , \mathsf{d})$ be a geodesic metric space which admits a weight measure $\mu_{\mathrm{w}}$, and assume that it puts no mass on non-proper chains and the total volume $\mu_{\mathrm{w}} (X)$ is finite.  Then, for any $0 < c < 1$, its $(c \cdot \mu_{\mathrm{w}} , \Gamma^{\mathrm{triv}})$-magnitude exists and can be computed as 
	\begin{equation*}
		\mathrm{Mag} (X , \mathsf{d} , c \cdot \mu_{\mathrm{w}}, \Gamma^{\mathrm{triv}}) = \frac{c}{1+c} \mu_{\mathrm{w}} (X) .
	\end{equation*}
\end{corollary}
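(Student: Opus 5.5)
The plan is to argue exactly as in the proof of Theorem \ref{thmgwgtms}, now carrying the scaling factor $c$ through each term. First I check that the simplified formula of Theorem \ref{thgmsnmnpc} applies to $c\cdot\mu_{\mathrm{w}}$. The measure $(c\cdot \mu_{\mathrm{w}})^{n+1} = c^{n+1}\mu_{\mathrm{w}}^{n+1}$ is a positive multiple of $\mu_{\mathrm{w}}^{n+1}$, so it still puts no mass on non-proper chains. Its total volume $c\,\mu_{\mathrm{w}}(X)$ is finite. Since $(X,\mathsf{d})$ is geodesic, $\Gamma^{\mathrm{triv}}_{\bm{x}}(\Omega_{\bm{x}})=1$ on $P_n(X)$. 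Hence
\begin{equation*}
\mathrm{Mag}(X,\mathsf{d},c\cdot\mu_{\mathrm{w}},\Gamma^{\mathrm{triv}};N) = c\,\mu_{\mathrm{w}}(X) + \sum_{n=1}^N (-1)^n c^{n+1}\int_{X^{n+1}} e^{-\sum_{k=1}^n \mathsf{d}(x_{k-1},x_k)}\,\mathrm{d}\mu_{\mathrm{w}}^{n+1}.
\end{equation*}

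Next I evaluate each integral by Fubini--Tonelli, which is legitimate because the integrand is nonnegative. I integrate successively in $x_n, x_{n-1},\dots,x_1$. At each step the defining property $\int_X e^{-\mathsf{d}(x,y)}\,\mathrm{d}\mu_{\mathrm{w}}(x)=1$, applied with $y=x_{k-1}$, removes one factor. What remains is $\int_X \mathrm{d}\mu_{\mathrm{w}} = \mu_{\mathrm{w}}(X)$. This is the computation already done in the proof of Theorem \ref{thmgwgtms}, so the $n$-th term equals $(-1)^n c^{n+1}\mu_{\mathrm{w}}(X)$.

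Therefore the partial magnitude is
\begin{equation*}
\mathrm{Mag}(X,\mathsf{d},c\cdot\mu_{\mathrm{w}},\Gamma^{\mathrm{triv}};N) = \mu_{\mathrm{w}}(X)\sum_{n=0}^N (-1)^n c^{n+1}.
\end{equation*}
This is a geometric series with ratio $-c$, and $|{-c}|<1$. So the limit $N\to\infty$ exists, which is a genuine limit rather than one along even $N$ only, and it equals $\mu_{\mathrm{w}}(X)\,c/(1+c)$, as claimed.

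I expect no real obstacle. The only point needing care is justifying the iterated integration, which is covered by Tonelli since both the measure and the integrand are positive. The rest is identical to Theorem \ref{thmgwgtms}.
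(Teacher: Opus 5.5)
Your proposal is correct and follows essentially the same route as the paper: reduce to the formula of Theorem \ref{thgmsnmnpc}, collapse each $n$-th integral to $c^{n+1}\mu_{\mathrm{w}}(X)$ by repeatedly applying the weight-measure identity, and sum the geometric series with ratio $-c$. You also state explicitly the justifications the paper leaves implicit, namely the Tonelli argument and the check that $c\cdot\mu_{\mathrm{w}}$ still satisfies the hypotheses of Theorem \ref{thgmsnmnpc}.
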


\begin{proof}
Arguing as in the proof of Corollary \ref{thmgbgtms}, we compute, for each $n \ge 1$,
	\begin{align*}
		\int_{{X}^{n+1}}  e^{-\sum_{k=1}^n \mathsf{d}(x_{k}, x_{k+1})} \mathrm{d} (c \cdot \mu_{\mathrm{w}})^{n+1} &= c^{n+1} \int_{X^{n+1}} e^{ -  \sum_{k=1}^n \mathsf{d} (x_{k-1} , x_k)} \mathrm{d} \mu_{\mathrm{w}}^{n+1} \\
		&=c^{n+1} \int_X \mathrm{d} \mu_{\mathrm{w}} \prod_{k=1}^n \int_X e^{ - \mathsf{d} (x_{k-1} , x_k) } \mathrm{d} \mu_{\mathrm{w}} \\
		&= c^{n+1} \mu_{\mathrm{w}} (X)
	\end{align*}
	by the definition of the weight measure. We thus get
	\begin{equation*}
		\mathrm{Mag} (X , \mathsf{d} ,c \cdot \mu_{\mathrm{w}}, \Gamma^{\mathrm{triv}}) = \sum_{n=0}^{\infty} (-1)^n c^{n+1} \mu_{\mathrm{w}} (X) = \frac{c}{1+c} \mu_{\mathrm{w}} (X)
	\end{equation*}
	as required.
\end{proof}

\subsection{Homogeneous magnitude theorem}

A compact homogeneous Riemannian manifold $(X,g)$ admits a left-invariant volume form $\mathrm{dvol}_g$ which defines a measure of finite volume. Then it is well-known (called the Speyer formula in \cite[Theorem 1]{LeiWil}) that
\begin{equation*}
	\mu_{\mathrm{w} , g}:= \frac{\mathrm{dvol}_g}{\int_{x \in X} e^{- \mathsf{d}_g (x,y) }\mathrm{dvol}_g}
\end{equation*}
is a weight measure, where the denominator does not depend on $y \in X$ because of the transitive action of the isometry group. Since $\mu_{\mathrm{w},g}$ is absolutely continuous with respect to the Lebesgue measure, it puts no mass on non-proper chains. Thus, we can apply Theorem \ref{thmgwgtms} and Corollary \ref{thmgbgtms} to immediately get the following result, analogously to what is called Speyer's Homogeneous Magnitude Theorem in \cite[Theorem 1]{Willerton}.

\begin{corollary} \label{crmgwgtms}
	Let $X$ be a compact homogeneous Riemannian manifold endowed with a left-invariant metric $g$, and the associated distance function $\mathsf{d}_g$ and the weight measure $\mu_{\mathrm{w} , g}$. Then, we have
\begin{equation*}
		\mathrm{Mag} (X , \mathsf{d}_g , \mu_{\mathrm{w,g}}, \Gamma^{\mathrm{triv}}; N) = \begin{cases}
			\mu_{\mathrm{w},g} (X)  &\quad \text{ if $N$ is even} \\
			0 &\quad \text{ if $N$ is odd .}
		\end{cases}
\end{equation*}
where
\begin{equation*}
	\mu_{\mathrm{w},g} (X) = \frac{\mathrm{Vol}_{g} (X)}{\int_{x \in X} e^{- \mathsf{d}_g (x,y)}\mathrm{dvol}_g }.
\end{equation*}
	Moreover, for any $0 < c < 1$, we have
\begin{equation*}
	\mathrm{Mag} (X , \mathsf{d}_g , c \cdot \mu_{\mathrm{w} , g}, \Gamma^{\mathrm{triv}}) = \frac{c}{1+c} \mu_{\mathrm{w} , g} (X).
\end{equation*}
\end{corollary}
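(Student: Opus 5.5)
The corollary will follow from Theorem \ref{thmgwgtms} and Corollary \ref{crmwmrgpc} once we check that $\mu_{\mathrm{w},g}$ satisfies their hypotheses. The plan is to verify those hypotheses one by one and then read off the formulae.

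First, $(X,\mathsf{d}_g)$ is a geodesic metric space. Indeed, $X$ is compact, hence complete, so the Hopf--Rinow theorem applies. Next, $\mu_{\mathrm{w},g}$ is well defined and is a weight measure. Put
\begin{equation*}
	c_g(y):=\int_{x \in X} e^{-\mathsf{d}_g(x,y)}\,\mathrm{dvol}_g .
\end{equation*}
For an isometry $\phi$ we have $\mathsf{d}_g(\phi x,\phi y)=\mathsf{d}_g(x,y)$, and $\phi$ preserves $\mathrm{dvol}_g$. So $c_g(\phi y)=c_g(y)$, and transitivity of the isometry group makes $c_g$ a constant. This constant is positive and finite, since the integrand lies in $(0,1]$ and $\mathrm{Vol}_g(X)<\infty$ by compactness. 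Dividing gives $\int_X e^{-\mathsf{d}_g(x,y)}\,\mathrm{d}\mu_{\mathrm{w},g}(x)=1$ for every $y$, which is the weight-measure property (Speyer's formula). The same bounds give $\mu_{\mathrm{w},g}(X)=\mathrm{Vol}_g(X)/c_g<\infty$, which is exactly the displayed expression for $\mu_{\mathrm{w},g}(X)$.

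Then, $\mu_{\mathrm{w},g}$ is a constant multiple of the smooth volume form, so it is absolutely continuous with respect to the Lebesgue measure. By Lemma \ref{lmacompn} it therefore puts no mass on non-proper chains. It is also a positive Radon measure, as required by the standing conventions of that section.

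With these hypotheses in place, Theorem \ref{thmgwgtms} gives the formula for $\mathrm{Mag}(X,\mathsf{d}_g,\mu_{\mathrm{w},g},\Gamma^{\mathrm{triv}};N)$, alternating between $\mu_{\mathrm{w},g}(X)$ and $0$ according to the parity of $N$. Corollary \ref{crmwmrgpc} gives $\frac{c}{1+c}\mu_{\mathrm{w},g}(X)$ for every $0<c<1$. There is no real obstacle. The only point needing care is the invariance of $\mathrm{dvol}_g$ and $\mathsf{d}_g$ under isometries, which is what makes the denominator independent of $y$. Here we use the transitive action of the isometry group of the homogeneous manifold, not merely a left action of a Lie group.
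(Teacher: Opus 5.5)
Your proposal is correct and follows the paper's argument. It uses Speyer's formula, with the denominator made independent of $y$ by the transitive isometric action; absolute continuity (Lemma \ref{lmacompn}) to rule out mass on non-proper chains; and then Theorem \ref{thmgwgtms} together with the regularised computation. Your citation of Corollary \ref{crmwmrgpc} for the $\frac{c}{1+c}\mu_{\mathrm{w},g}(X)$ formula is the right one. The paper's text points instead to Corollary \ref{thmgbgtms}, which concerns balanced probability measures and only recovers the special case $c=1/\mu_{\mathrm{w},g}(X)$.
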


Willerton \cite[\S 5]{Willerton} computed the asymptotic expansion of the volume of the weight measure as $t \to \infty$ when $(X,g)$ is a compact homogeneous Riemannian manifold. In the notation of this paper, it equals
\begin{equation*}
	\mathrm{Mag} (X , \mathsf{d}_{t^2g} , \mu_{\mathrm{w},t^2g}, \Gamma^{\mathrm{triv}}) = \mathrm{Mag} (X , t \mathsf{d}_g , t^{\dim X} \mu_{\mathrm{w},g}, \Gamma^{\mathrm{triv}}) =  \frac{\mathrm{Vol}_{g} (X)}{\int_{x \in X} e^{- t \mathsf{d}_g (x,y)}\mathrm{dvol}_g }.
\end{equation*}
It is important to note that both the metric and the measure depend on $t$ on the left hand side, and that it is different from $\mathrm{Mag} (X , t \mathsf{d}_g , \mu_{\mathrm{w},g}, \Gamma^{\mathrm{triv}})$. For example, when $X$ is a sphere of radius $R$ with the standard round metric, we have $\mu_{\mathrm{w} , g}= \mathrm{dvol}_g / (1-e^{- \pi R})$ which implies
\begin{equation*}
	\mathrm{Mag} (S^1_R , t \mathsf{d}_g , t \mu_{\mathrm{w},g}, \Gamma^{\mathrm{triv}}) = \frac{2 \pi R}{1-e^{- t\pi R}}
\end{equation*}
but the computation of $\mathrm{Mag} (S^1_R , t \mathsf{d}_g , \mu_{\mathrm{w},g}, \Gamma^{\mathrm{triv}})$ seems very difficult, as it is essentially the same as in \S \ref{scexmfdcl} by using $\mu_{\mathrm{w} , g}= \mathrm{dvol}_g / (1-e^{- \pi R})$.

\begin{remark} \label{rmscmtvf}
	For a compact homogeneous Riemannian manifold $(X,g)$, the weight measure is by definition a constant multiple of the volume form, where the constant is given by
	\begin{equation*}
		\tilde{c}:= \frac{1}{\int_{x \in X} e^{- \mathsf{d}_g (x,y) }\mathrm{dvol}_g } = \frac{\mathrm{Vol}_{g} (X)}{\int_{(x,y) \in X^2} e^{- \mathsf{d}_g (x,y) }\mathrm{dvol}^2_g} 
	\end{equation*}
	for any $y \in X$. This observation implies that the $(\mu , \Gamma^{\mathrm{triv}})$-magnitude depends highly non-trivially on scaling the measure: $\mathrm{Mag} (X , \mathsf{d}_g , \mu_{\mathrm{w} , g}, \Gamma^{\mathrm{triv}}) = \mathrm{Mag} (X , \mathsf{d}_g , \tilde{c} \cdot \mathrm{dvol}_g , \Gamma^{\mathrm{triv}})$ is computable but $\mathrm{Mag} (X , \mathsf{d}_g , \mathrm{dvol}_g , \Gamma^{\mathrm{triv}})$ is difficult to compute, even for $S^1$ as we saw above.
\end{remark}

\subsection{Positive definite metric spaces}

Compact positive definite metric spaces, i.e.~compact metric spaces such that for any finite subset $A$ the matrix $Z_A$ in (\ref{eqdfsmlmx}) is positive definite, play an important role in the study of magnitude. We recall its fundamental properties from \cite[\S 2]{Mec13}, where all details can be found. For a compact metric space $(X , \mathsf{d})$, we define a bilinear form
\begin{equation*}
	\mathcal{Z}_X ( \mu_1 , \mu_2 ) := \int_{x \in X} \int_{y \in X} e^{-\mathsf{d} (x,y)} \mathrm{d} \mu_1 \mathrm{d} \mu_2
\end{equation*}
on the space of finite signed Borel measures on $X$. Meckes \cite[Lemma 2.2]{Mec13} proved that $(X , \mathsf{d})$ is positive definite if and only if $\mathcal{Z}_X$ is positive definite, and that there exists a finite positive Borel measure $\mu_{\mathrm{w}}$ on a compact positive definite space $(X , \mathsf{d})$ such that
\begin{equation*}
	\mu_{\mathrm{w}}(X)= \sup_{\mu} \frac{\mu(X)^2}{\mathcal{Z}_X ( \mu , \mu )} 
\end{equation*}
holds \cite[Proposition 2.9]{Mec13}, where the supremum is over all non-zero finite positive Borel measures. Under the extra assumption that $(X , \mathsf{d})$ is positively weighted \cite[page 744]{Mec13}, there exists a (positive, not signed) weight measure \cite[Corollary 2.10]{Mec13} whose volume agrees with the classical magnitude (\ref{eqdfclmg}) by \cite[Theorems 2.3 and 2.4]{Mec13}. This quantity is even continuous with respect to the Gromov--Hausdorff distance on the class of compact positive definite metric spaces \cite[Proposition 2.11]{Mec13}, and that it agrees with the limit of the magnitude of finite subsets in $X$ converging to $(X , \mathsf{d})$ in the Gromov--Hausdorff distance \cite[Corollary 2.7]{Mec13}. Important classes of positively weighted positive definite spaces are as given in \cite[Theorem 3.6]{Mec13} and \cite[Lemma 2.8]{Mec13}. For finite positive definite spaces, the magnitude can be computed as in \cite[Proposition 2.4.3]{Lein13}. It is also known that a compact Riemannian manifold with property (MR) is positive definite once the distance is re-scaled \cite[Theorem 2.1]{GGL22}, where the property (MR) is as defined in \cite[Definition 3.3]{GGL24}.

Thus the weight measure, if exists, plays a hugely important role in the theory of magnitude. Whether this measure puts no mass on non-proper chains is a subtle problem (indeed not satisfied for line segments, as we see in \S \ref{sccbism}), but we get the following corollary when it does, by combining Theorem \ref{thmgwgtms} and the results in \cite[\S 2]{Mec13} by Meckes mentioned above.

\begin{corollary} \label{clmwsmmz}
	Let $(X , \mathsf{d})$ be a positively weighted compact positive definite geodesic metric space with the weight measure $\mu_{\mathrm{w}}$ that puts no mass on non-proper chains. Then, we have
	\begin{equation*}
		\lim_{N \to \infty} \mathrm{Mag} (X , \mathsf{d} , \mu_{\mathrm{w}}, \Gamma^{\mathrm{triv}} ; 2N) = \sup_{Z \subset X, \; \textup{finite}} \mathrm{Mag} (Z , \mathsf{d} |_Z) , 
	\end{equation*}
	and also
	\begin{equation*}
		\lim_{N \to \infty} \mathrm{Mag} (X , \mathsf{d} , \mu_{\mathrm{w}}, \Gamma^{\mathrm{triv}} ; 2N)  = \sup_{\mu} \frac{\mu(X)^2}{\mathcal{Z}_X ( \mu , \mu )}  ,
		\end{equation*}
		where the supremum is over all non-zero finite positive measures. Moreover, the $(\mu_{\mathrm{w}}, \Gamma^{\mathrm{triv}})$-magnitude as above is continuous with respect to the Gromov--Hausdorff distance on the class of compact positive definite metric spaces with the scaled weight measure.
\end{corollary}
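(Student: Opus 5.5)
The plan is to reduce everything to Theorem \ref{thmgwgtms} and then quote Meckes' results from \cite[\S 2]{Mec13}. First, since $(X,\mathsf{d})$ is compact and positively weighted positive definite, \cite[Corollary 2.10]{Mec13} gives a positive weight measure $\mu_{\mathrm{w}}$ with finite total mass; compactness guarantees this finiteness, and in fact $\mu_{\mathrm{w}}(X) \le \sup_\mu \mu(X)^2/\mathcal{Z}_X(\mu,\mu)<\infty$. The remaining hypotheses of Theorem \ref{thmgwgtms} are assumed: $X$ is geodesic and $\mu_{\mathrm{w}}$ puts no mass on non-proper chains. That theorem then yields $\mathrm{Mag}(X,\mathsf{d},\mu_{\mathrm{w}},\Gamma^{\mathrm{triv}};2N)=\mu_{\mathrm{w}}(X)$ for every $N$. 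So the limit along even $N$ exists and equals $\mu_{\mathrm{w}}(X)$.

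Next, to get the first equality, I would identify $\mu_{\mathrm{w}}(X)$ with the classical magnitude $\sup_{Z}\mathrm{Mag}(Z,\mathsf{d}|_Z)$ by \cite[Theorems 2.3 and 2.4]{Mec13}. For the second equality I would verify that the maximiser in \cite[Proposition 2.9]{Mec13} may be taken to be our weight measure. This follows by a direct check: for a weight measure, $\mathcal{Z}_X(\mu_{\mathrm{w}},\mu_{\mathrm{w}})=\int_X\big(\int_X e^{-\mathsf{d}(x,y)}\mathrm{d}\mu_{\mathrm{w}}(x)\big)\mathrm{d}\mu_{\mathrm{w}}(y)=\mu_{\mathrm{w}}(X)$. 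Hence $\mu_{\mathrm{w}}(X)^2/\mathcal{Z}_X(\mu_{\mathrm{w}},\mu_{\mathrm{w}})=\mu_{\mathrm{w}}(X)$.

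For the reverse inequality, take any finite positive $\mu$. Cauchy--Schwarz for the positive definite form $\mathcal{Z}_X$ gives
\begin{equation*}
\mu(X)^2=\mathcal{Z}_X(\mu,\mu_{\mathrm{w}})^2\le \mathcal{Z}_X(\mu,\mu)\,\mathcal{Z}_X(\mu_{\mathrm{w}},\mu_{\mathrm{w}}).
\end{equation*}
The first equality uses the weight property. This shows the supremum equals $\mu_{\mathrm{w}}(X)$. It also shows the answer is independent of the choice of weight measure, which is consistent with \cite[\S 2.1]{Willerton}.

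Finally, the continuity statement follows from \cite[Proposition 2.11]{Mec13}, which asserts Gromov--Hausdorff continuity of the classical magnitude on compact positive definite spaces. Combined with the identification just established, this transfers to the even-limit $(\mu_{\mathrm{w}},\Gamma^{\mathrm{triv}})$-magnitude, since the two agree on every space in the class satisfying our hypotheses.

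The main obstacle is interpretive rather than computational. One must check that the weight measure produced by Meckes is positive and finite; positivity is exactly the ``positively weighted'' hypothesis. One must also be careful with the continuity claim. Comparing spaces only makes sense within the subclass where $\mu_{\mathrm{w}}$ puts no mass on non-proper chains, and there the statement should be read as continuity of $X\mapsto\mu_{\mathrm{w}}(X)$. I would state this restriction explicitly rather than claim more.
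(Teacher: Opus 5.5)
Your proposal is correct and follows the paper's route. Theorem \ref{thmgwgtms} makes every even partial sum equal to $\mu_{\mathrm{w}}(X)$, and Meckes' results in \cite[\S 2]{Mec13} identify this with the classical magnitude and supply the Gromov--Hausdorff continuity; your Fubini plus Cauchy--Schwarz check of $\mu_{\mathrm{w}}(X)=\sup_{\mu}\mu(X)^2/\mathcal{Z}_X(\mu,\mu)$ is a self-contained substitute for citing \cite[Proposition 2.9 and Corollary 2.10]{Mec13}. One caution on the continuity step: on the full class of compact positive definite spaces Meckes proves only lower semicontinuity of the classical magnitude, so quote continuity for the positive-measure supremum (or on the positively weighted class) and transfer it through the identification you already established, within the subclass you rightly restrict to.
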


It is known that the above assumptions are satisfied for round spheres \cite[Lemma 2.8 (2) and Theorem 3.6 (6)]{Mec13}. Combined with Theorem \ref{thmfkeqm} and Corollary \ref{crmgwgtms}, we have the following result which exhibits a relationship between the classical magnitude (\ref{eqdfclmg}) and the magnitude of Fekete configurations for $\mathbb{CP}^1$, which is the only K\"ahler manifold that can be realised as a sphere.

\begin{corollary} \label{thmclmfkc1}
	For $\mathbb{CP}^1$ with the Fubini--Study metric $\mathsf{d}$, the classical magnitude can be computed in terms of the re-scaled limit of the magnitude of Fekete configurations. More precisely, for a Fekete configuration $\mathcal{F}_m$ in $(\mathbb{CP}^1, \mathsf{d})$ and a constant $c >1$ defined by
	\begin{equation*}
		c  := \frac{2( 1+r^2)}{1+e^{- \pi r}}
	\end{equation*}
	where we set the diameter of $\mathbb{CP}^1$ with respect to $\mathsf{d}$ to be $\pi r$, we have
	\begin{equation*}
		\sup_{Z \subset \mathbb{CP}^1, \; \textup{finite}} \mathrm{Mag} (Z , \mathsf{d} |_Z ) = \lim_{\gamma \nearrow c} \lim_{m \to \infty} \frac{2 \gamma}{m} \mathrm{Mag} ( \mathcal{F}_m , \mathsf{d}_{\log (m / \gamma)} )
	\end{equation*}
	and also
	\begin{equation*}
		\sup_{Z \subset \mathbb{CP}^1, \; \textup{finite}} \mathrm{Mag} (Z , \mathsf{d} |_Z ) = \lim_{m \to \infty} \frac{c}{m} \mathrm{Mag} ( \mathcal{F}_m , \mathsf{d}_{\log (m / c)} , \mu_{\sharp}, \Gamma^{\mathrm{triv}} ; 2N)
	\end{equation*}
	uniformly for all $N \in \mathbb{Z}_{>0}$.
\end{corollary}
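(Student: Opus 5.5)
The plan is to combine three inputs: the Fekete limit of Theorem \ref{thmfkeqm}, the scaling identity of Proposition \ref{ppscmdc}, and the homogeneous computation of Corollary \ref{crmgwgtms}, with Corollary \ref{clmwsmmz} identifying the classical magnitude of the round sphere with $\mu_{\mathrm{w}}(X)$.

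\emph{Step 1: identify the target and the constant.} On $\mathbb{CP}^1$ with the Fubini--Study metric, which is a round $S^2_r$ with diameter $\pi r$, the equilibrium measure for the natural weight is the normalised volume form $\mu_{\mathrm{eq}}=\mathrm{dvol}_g/(4\pi r^2)$. By the $n=1$ computation in \S\ref{ssc2shn2ex} with $t=1$,
\begin{equation*}
\int_X e^{-\mathsf{d}(x,y)}\,\mathrm{dvol}_g(x)=\frac{4\pi r^2(1+e^{-\pi r})}{2(1+r^2)} .
\end{equation*}
The Speyer weight measure is therefore $\mu_{\mathrm{w}}=c\,\mu_{\mathrm{eq}}$ with $c=2(1+r^2)/(1+e^{-\pi r})$. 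This also shows $\mu_{\mathrm{w}}(X)=c$, and $c>1$ by the Remark on weight measures. Corollary \ref{clmwsmmz} (via Meckes, using \cite[Lemma 2.8, Theorem 3.6]{Mec13}) then identifies the left-hand side with $\mu_{\mathrm{w}}(X)=c$.

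\emph{Step 2: the first formula.} For $\gamma<c$, write $\gamma\mu_{\mathrm{eq}}=(\gamma/c)\mu_{\mathrm{w}}$. Corollary \ref{crmgwgtms} gives
\begin{equation*}
\mathrm{Mag}(X,\mathsf{d},\gamma\mu_{\mathrm{eq}},\Gamma^{\mathrm{triv}})=\frac{\gamma/c}{1+\gamma/c}\,c=\frac{\gamma c}{c+\gamma},
\end{equation*}
which tends to $c/2$ as $\gamma\nearrow c$.

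On the Fekete side, the empirical measures $\mu_m$ converge weakly to $\mu_{\mathrm{eq}}$, so $\gamma\mu_m\to\gamma\mu_{\mathrm{eq}}$. Apply Proposition \ref{ppscmdc} with $e^{-c'}=\gamma/m$, i.e.\ $c'=\log(m/\gamma)$, together with Lemma \ref{lmfsbepm} and Theorem \ref{lmcvfmsm}. This gives
\begin{equation*}
\mathrm{Mag}(\mathcal{F}_m,\mathsf{d},\gamma\mu_m,\Gamma^{\mathrm{triv}})=\frac{\gamma}{m}\,\mathrm{Mag}(\mathcal{F}_m,\mathsf{d}_{\log(m/\gamma)}).
\end{equation*}
To pass $m\to\infty$ I would rerun the proof of Theorem \ref{thmfkeqm} with $\gamma\mu_m$ in place of $\mu_m$. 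The needed uniform bound is
\begin{equation*}
\max_y\int e^{-\mathsf{d}(x,y)}\,\mathrm{d}(\gamma\mu_m)\le c_0<1
\end{equation*}
for all large $m$. This holds because the limit value is $\gamma\cdot\int e^{-\mathsf{d}}\,\mathrm{d}\mu_{\mathrm{eq}}=\gamma/c<1$. The convergence is uniform in $y$ since $y\mapsto e^{-\mathsf{d}(\cdot,y)}$ is an equicontinuous family on a compact space and weak convergence is uniform on such families. Multiplying by $2$ then yields the first formula.

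\emph{Step 3: the second formula.} Take $\gamma=c$. Here Theorem \ref{thmgwgtms} gives even partial sums equal to $c$ for the limit measure $c\mu_{\mathrm{eq}}=\mu_{\mathrm{w}}$. For each fixed $N$, weak convergence of $(c\mu_m)^{n+1}$ to $\mu_{\mathrm{w}}^{n+1}$, applied to the continuous integrands $e^{-\sum_k\mathsf{d}(x_{k-1},x_k)}$, gives convergence of the $2N$-th partial magnitudes. Proposition \ref{ppscmdc} with $\log(m/c)$ converts these to $\frac{c}{m}\mathrm{Mag}(\mathcal{F}_m,\mathsf{d}_{\log(m/c)},\mu_\sharp,\Gamma^{\mathrm{triv}};2N)$.

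One point needs care. The $n$-th term involves the full product $\mu_m^{n+1}$, whereas the partial magnitude only integrates over proper chains, and empirical measures do charge the diagonals. On a diagonal the factor $\Gamma^{\mathrm{triv}}$ vanishes, so those contributions are dropped. Their total mass is $O(n/m)$, so they disappear in the limit.

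\emph{Main obstacle: uniformity in $N$.} The hard step is the claimed uniformity in $N$ at the critical value $\gamma=c$, where the geometric contraction constant tends to $1$ and no summable bound is available. My plan is to write, for each $m$ with $f_m(y)=\int e^{-\mathsf{d}(x,y)}\,\mathrm{d}(c\mu_m)(x)$, the $n$-th term as $\int f_m^{\,n}$-type iterated integrals. I would then show $\|f_m-1\|_\infty\to0$, again by equicontinuity. This bounds each $n$-th term by $c(1+\epsilon_m)^n$, which gives uniformity for $n\le 2N$ only up to a factor $(1+\epsilon_m)^{2N}$.

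That factor is not uniform in $N$. So I expect the statement must be read as convergence for each $N$, with any uniformity coming from a more refined quadrature estimate for Fekete points. If that estimate is unavailable, I would settle for the pointwise-in-$N$ version.
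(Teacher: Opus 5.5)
Your proposal is correct and follows essentially the paper's proof. You identify $\mu_{\mathrm{w}}=c\,\mu_{\mathrm{eq}}$ from the $n=1$ sphere integral, and combine Proposition \ref{ppscmdc} with the Berman--Boucksom--Witt Nystr\"om convergence at each fixed truncation. You exchange $m\to\infty$ and $N'\to\infty$ for $\gamma<c$ via the bound $\max_y\gamma\int e^{-\mathsf{d}(x,y)}\,\mathrm{d}\mu_m<1$ for large $m$, and let $\gamma\nearrow c$ in $\gamma c/(c+\gamma)$. You treat $\gamma=c$ only at a fixed truncation $2N$, using Theorem \ref{thmgwgtms} and Corollary \ref{clmwsmmz}. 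Your handling of the $O(n/m)$ diagonal mass and of uniformity in $y$ is in fact more careful than the paper's.

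On uniformity in $N$, the paper's own proof also argues only for each fixed $N$ (``only finitely many terms''), so the pointwise reading you settle for is exactly what is established. Your doubt about the stronger claim is justified. For fixed large $m$, the Perron eigenvalue of $\frac{c}{m}(Z_{\mathcal{F}_m}-I)$ is close to, but generically not equal to, $1$. So as $N\to\infty$ the even partial sums either blow up or converge to $\frac{c}{m}\mathrm{Mag}(\mathcal{F}_m,\mathsf{d}_{\log(m/c)})\approx c/2$. Genuine uniformity in $N$ therefore cannot be expected.
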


\begin{proof}
	In terms of the notation in Theorem \ref{thmfkeqm}, $X =K = \mathbb{CP}^1$, $\mathsf{d}$ is the distance induced from the Fubini--Study metric, and $\phi_{\mathrm{psh}}$ is the K\"ahler potential of the Fubini--Study metric. In this case, the equilibrium measure is given by the Fubini--Study volume form divided by its total volume \cite[(1.3) and (1.4)]{BB10}.
	
	Let $\mu_{\mathrm{FS}}$ be the volume form of the Fubini--Study metric on $\mathbb{CP}^1$, which agrees with the round metric under the identification $\mathbb{CP}^1 \cong S^2$. We further normalise $\mu_{\mathrm{FS}}$ to be the probability measure, so that it agrees with the equilibrium measure. The weight measure $\mu_{\mathrm{w}}$ on $\mathbb{CP}^1$ is given by $c \cdot \mu_{\mathrm{FS}}$ as in Remark \ref{rmscmtvf}, with the constant $c$ given by
	\begin{equation*}
		c = \left( \int_{\mathbb{CP}^1} e^{- \mathsf{d} (x,y)} \mathrm{d} \mu_{\mathrm{FS}} (y) \right)^{-1} = \frac{(4 \pi r^2)^2}{2 (2 \pi)^2 r^4}  \frac{1+r^2}{1+ e^{-e \pi r}} =  \frac{2(1+r^2)}{1+e^{- \pi r}} >1,
	\end{equation*}
	where we repeated the computation in \S \ref{ssc2shn2ex}.
	
	Pick any $0 < \gamma < c$ and set $b:= \gamma /c \in (0,1)$. Then, by Proposition \ref{ppscmdc}, for any finite subset $Z \subset \mathbb{CP}^1$ and for each $N' \in \mathbb{Z}_{>0}$, we have
	\begin{align}
	\mathrm{Mag} ( \mathbb{CP}^1 , \mathsf{d}, b \cdot \mu_{\mathrm{w}}, \Gamma^{\mathrm{triv}} ; N') &= \mathrm{Mag} ( \mathbb{CP}^1 , \mathsf{d} , \gamma \cdot \mu_{\mathrm{FS}}, \Gamma^{\mathrm{triv}} ; N') \notag \\
	&= \lim_{m \to \infty}  \mathrm{Mag} ( \mathcal{F}_m , \mathsf{d} |_{\mathcal{F}_m} , \gamma \cdot \mu_{\mathcal{F}_m , \mathrm{em}}, \Gamma^{\mathrm{triv}} ; N') \notag \\
	&= \lim_{m \to \infty} \frac{\gamma}{m} \mathrm{Mag} ( \mathcal{F}_m , \mathsf{d}_{\log (m / \gamma)} , \mu_{\sharp}, \Gamma^{\mathrm{triv}} ; N') \label{eqthcp1fkc}
	\end{align}
	where we used the weak convergence of $\mu_{\mathcal{F}_m , \mathrm{em}}$ to the equilibrium measure $\mu_{\mathrm{FS}}$ in the second equality (see the proof of Theorem \ref{thmfkeqm}), noting that the sum is finite here. We also note that $\log (m / \gamma) > 0$ for all large enough $m$, so that $\mathsf{d}_{\log (m / \gamma)}$ is well-defined. Since there are only finitely many terms involved in the sum, the argument above also works for $\gamma = c$, which gives the second claim by setting $N' = 2N$ and by recalling Corollary \ref{clmwsmmz} (see also Theorem \ref{thmgwgtms}).

	We prove the first claim. The limit $N' \to \infty$ of the left hand side of (\ref{eqthcp1fkc}) can be computed as in the proof of Corollary \ref{crmwmrgpc}. Indeed, we have
	\begin{equation*}
		\mathrm{Mag} ( \mathbb{CP}^1 , \mathsf{d}, b \cdot \mu_{\mathrm{w}}, \Gamma^{\mathrm{triv}} ; N') = b \frac{1-(-b)^{N'+1}}{1+b}\mu_{\mathrm{w}} (\mathbb{CP}^1)
	\end{equation*}
	which converges to $\mu_{\mathrm{w}} (\mathbb{CP}^1) /2$ when we take the limit $N' \to \infty$, and then $b \nearrow 1$. We thus get
	\begin{equation} \label{eqthcp1fkccl}
		\lim_{b \nearrow 1} \lim_{N' \to \infty} \mathrm{Mag} ( \mathbb{CP}^1 , \mathsf{d}, b \cdot \mu_{\mathrm{w}}, \Gamma^{\mathrm{triv}} ; N') = \frac{1}{2} \mu_{\mathrm{w}} (\mathbb{CP}^1) = \frac{1}{2}\sup_{Z \subset \mathbb{CP}^1, \; \textup{finite}} \mathrm{Mag} (Z , \mathsf{d} |_Z )
	\end{equation}
	by Corollary \ref{clmwsmmz}.
	
	We now take the limit $N' \to \infty$, and then $\gamma \nearrow c$, on the right hand side of (\ref{eqthcp1fkc}), noting that the limits exist as above. For each fixed $0 < \gamma < c$, the limit $N' \to \infty$ is uniform in $m$, since for any $x \in \mathbb{CP}^1$ we have
	\begin{equation*}
		\gamma \int_{y \in \mathbb{CP}^1} e^{- \mathsf{d} (x,y)} \mathrm{d} \mu_{\mathcal{F}_m , \mathrm{em}} < c \int_{y \in \mathbb{CP}^1} e^{- \mathsf{d} (x,y)} \mathrm{d} \mu_{\mathcal{F}_m , \mathrm{em}} \to \int_{y \in \mathbb{CP}^1} e^{- \mathsf{d} (x,y)} \mathrm{d} \mu_{\mathrm{w}} =1
	\end{equation*}
	as $m \to \infty$, meaning that the maximum over $\mathbb{CP}^1$ of the left hand side above is less than 1 for all large enough $m$ once $\gamma$ is fixed. We then argue as in the proof of Theorem \ref{thmfkeqm} to exchange the limits $m \to \infty$ and $N' \to \infty$. Combined with the previous limit (\ref{eqthcp1fkccl}), we get
	\begin{equation*}
		\sup_{Z \subset \mathbb{CP}^1, \; \textup{finite}} \mathrm{Mag} (Z , \mathsf{d} |_Z ) = \lim_{\gamma \nearrow c} \lim_{m \to \infty} \frac{2 \gamma}{m} \lim_{N' \to \infty} \mathrm{Mag} ( \mathcal{F}_m , \mathsf{d}_{\log (m / \gamma)} , \mu_{\sharp}, \Gamma^{\mathrm{triv}} ; N')
	\end{equation*}
	which gives the required result.
\end{proof}

\begin{remark}
We observe that the bilinear form $\mathcal{Z}_X$ is closely related to our definition of the magnitude, since
\begin{equation*}
	\mathcal{Z}_X ( \mu , \mu ) = \int_{x \in X} \int_{y \in X} e^{-\mathsf{d} (x,y) } \mathrm{d} \mu \mathrm{d} \mu
\end{equation*}
is exactly the $n=1$ term of the magnitude in Theorem \ref{thmggrmmf}, as long as $\mu$ puts no mass on $X^{2} \setminus P_1 (X)$. We can equivalently write
\begin{equation} \label{eqzxblffpm}
	\mathcal{Z}_X ( \mu , \mu ) = \mu (X) - \mathrm{Mag} (X , \mathsf{d} , \mu;1).
\end{equation}
Thus, as long as $\mathrm{Mag} (X , \mathsf{d} , \mu;1) \neq \mu (X)$, we can write
\begin{equation*}
	\frac{\mu (X)^2}{\mathcal{Z}_X ( \mu , \mu )} = \frac{\mu (X)^2}{\mu (X) - \mathrm{Mag} (X , \mathsf{d} , \mu;1)}.
\end{equation*}
When $(X , \mathsf{d})$ is positive definite, the supremum of the above quantity over all non-zero measures is called the maximum diversity of $(X , \mathsf{d})$ by Meckes \cite[(2.2)]{Mec13}, and discussed in detail in \cite{LR21} and \cite[Chapter 6]{Leinbook}. After re-scaling, we may take the supremum of $\mu (X)^2 / \mathcal{Z}_X ( \mu , \mu )$ over all probability measures. If there exists a Borel probability measure $\mu_{\mathrm{max}}$ which attains
\begin{equation*}
	D_{\mathrm{max}} (X) := \sup_{\mu} \frac{1}{\mathcal{Z}_X ( \mu , \mu )} = \frac{1}{\mathcal{Z}_X ( \mu_{\mathrm{max}} , \mu_{\mathrm{max}} )}
\end{equation*}
and puts no mass on $X^{2} \setminus P_1 (X)$, we can write
\begin{equation*}
	D_{\mathrm{max}} (X) = \frac{1}{1 - \mathrm{Mag} (X , \mathsf{d} , \mu_{\mathrm{max}};1)}.
\end{equation*}

It would be nice if we can say that the maximum diversity is achieved for a measure which maximises the first partial $\mu$-magnitude, but (\ref{eqzxblffpm}) only holds for the measure which puts no mass on $X^{2} \setminus P_1 (X)$ and hence the range of supremum becomes subtle.
\end{remark}

\subsection{Closed bounded intervals with weight measure} \label{sccbism}

When $(X , \mathsf{d})$ is a subset of $\mathbb{R}$, with the restriction of the standard metric, it admits a weight measure \cite[Lemma 2.8]{Mec13}. Various examples were computed in e.g.~\cite[\S 3.2]{Lein13}, \cite[\S 2, 3]{LeiWil}, \cite[\S 2.2]{Willerton}. In this case, however, the weight measure can have a singular component which puts mass on non-proper chains. For example, the weight measure for the closed interval $[a,b]$ of length $L:=b-a$ is given by $\mu_{\mathrm{w}} = (\delta_a + \delta_b + \mu )/2$ where $\mu$ is the standard Lebesgue measure \cite[Theorem 2]{Willerton}.

We now set $X$ to be the closed interval $[a,b]$ of length $L$, and compute the $\mu_{\mathrm{w}}$-magnitude of this space. Any two points in $X$ can be joined by a unique length-minimising geodesic. We identify subsets of $X^{n+1} \setminus P_n (X)$ that has non-trivial measure with respect to $\mu_{\mathrm{w}}$. Since the only singular component of $\mu_{\mathrm{w}}$ is the delta measure at the boundary, the elements that contribute non-trivially to the integral are necessarily of the form
\begin{equation*}
	\{ (x_0 , \dots , x_n) \in X^{n+1} \mid x_{k-1}=x_{k}= \text{$a$ or $b$ for some $k=1 , \dots , n$ } \} .
\end{equation*}
We list all possible patterns of indices that arise in $X^{n+1} \setminus P_n (X)$, which are shown to correspond one-to-one with ordered partitions of $\tilde{n}:=n+1$ apart from $1 + 1 + \cdots + 1$, as follows.

For example, when $n=3$, the ordered partition $2+1+1$ of $\tilde{n}=4$ corresponds to the chains with $x_0=x_1 \neq x_2 \neq x_3$, or more explicitly
\begin{equation*}
	\{ (x_0 , x_1, x_2 , x_3) \in X^{4} \mid x_0=x_1 = a, x_2=b, x_3=a, \text{ or } x_0=x_1 = b, x_2=a, x_3=b \} .
\end{equation*}
Thus, parts greater than or equal to $2$ of the partition correspond to the number of repeated variables in $X^{n+1} \setminus P_n (X)$. We write
\begin{equation*}
	\mathcal{C}_{n+1} := \{ \text{ordered partitions of $\tilde{n}$} \} \setminus \{ 1 + 1 + \cdots + 1 \} .
\end{equation*}
To simplify the terminology, for each $\lambda \in \mathcal{C}_{n+1}$, a part greater than or equal to $2$ will be called \textbf{repeated indices}. We consider repeated indices to be \textbf{non-adjacent} if they are separated by $1$, and a \textbf{cluster} is meant to be a group of adjacent repeated indices. For example, if $\tilde{n}=8$ and $\lambda = 3+2+1+2$, there are three groups of repeated indices (namely $3$, $2$ and $2$) and two clusters ($3+2$ and $2$). For each $\lambda \in \mathcal{C}_{n+1}$, we define
\begin{equation*}
	f ( \lambda ) := \text{the number of clusters in } \lambda ,
\end{equation*}
and
\begin{equation*}
	g ( \lambda ) := \text{the total number of plus signs within clusters in } \lambda ,
\end{equation*}
For example, if $\tilde{n}=14$ and $\lambda = 3+2+1+2+ 1 + 2+ 3$, then $f(\lambda)=3$ ($3+2$, $2$, and $2+3$) and $g(\lambda)=2$ (one $+$ from $3+2$, and another from $2+3$); if $\tilde{n}=14$ and $\lambda' =3+ 2+2+1+ 1+2+3$, we have $f(\lambda')=2$ ($3+2+2$ and $2+3$) and $g(\lambda')=3$ (two from $3+2+2$ and one from $2+3$).

We compute the integrals for the magnitude, first when there is a single group of $i$ repeated variables $x_k = \cdots = x_{k+i}$, which corresponds to the partition $\lambda = 1 + 1 + \cdots + i + \cdots + 1$ which has $f(\lambda) = 1$ and $g(\lambda) = 0$. Since $\mu_{\mathrm{w}}$ has no singular component in $(a,b)$, the locus $\{ x_k = \cdots = x_{k+i} \} $ has measure zero inside $(a,b)^i$ with respect to $\mu^i_{\mathrm{w}}$. Thus the only non-trivial contributions from repeated variables come from the boundary $\partial X = \{ a \} \cup \{ b \}$. Hence we get
\begin{align*}
	&\int_{\bm{x} \in X^{n+1} \text{ with } x_k = \cdots = x_{k+i}}  e^{-  \mathrm{Len} (\gamma (\bm{x})) } \mathrm{d} \mu_{\mathrm{w}}^{n+1} \\
	&=\sum_{y=a,b} \int_{X^k} e^{- \mathsf{d}(x_0, x_1) - \cdots - \mathsf{d} (x_{k-1},y)}\mathrm{d} \mu^k_{\mathrm{w}} \times \int_{X^{n-k-i}} e^{-  \mathsf{d}(y, x_{k+i+1}) - \cdots - \mathsf{d} (x_{n-1},x_n) }\mathrm{d} \mu^{n-k-i}_{\mathrm{w}}	\\
	&=2 \\
	&= | \partial X |^{f(\lambda)} e^{-  L g(\lambda) }
\end{align*}
by recalling that $\mu_{\mathrm{w}}$ is the weight measure of $(X , \mathsf{d})$. Computation is entirely the same when there are multiple non-adjacent groups, but there is an extra complication coming from adjacent groups of repeated indices. For example, if $\tilde{n}=4$ with $x_0=x_1 \neq x_2=x_3$, which corresponds to the partition $\lambda = 2+2$ with $f(\lambda)=1$ and $g(\lambda)=1$, we have
\begin{align*}
	\int_{\bm{x} \in X^{4} \text{ with } x_0=x_1 \neq x_2=x_3}  e^{-  \mathsf{d}(x_0, x_1) - \mathsf{d}(x_1, x_2) - \mathsf{d}(x_2, x_3) } \mathrm{d} \mu^4_{\mathrm{w}} &=e^{-  \mathsf{d}(a,b) } + e^{- \mathsf{d}(b,a) } \\
	&=| \partial X |^{f(\lambda)}e^{-  L g(\lambda) }.
\end{align*}
Writing
\begin{equation*}
	Q_n (X; \lambda ) := \{ \bm{x} = (x_0, \dots, x_n ) \in X^{n+1} \mid \text{the repeated indices are as given by $\lambda \in \mathcal{C}_{n+1}$} \},
\end{equation*}
for the set of non-proper chains with the repetition of indices specified by $\lambda$, the above arguments imply that the integrals in the magnitude can be computed as
\begin{align*}
	&\int_{\bm{x} \in P_n (X)}  e^{- \mathrm{Len} (\gamma (\bm{x})) } \mathrm{d} \mu^{n+1}_{\mathrm{w}} \\
	&= \int_{\bm{x} \in {X}^{n+1}}  e^{-  \mathrm{Len} (\gamma (\bm{x})) } \mathrm{d} \mu^{n+1}_{\mathrm{w}}  - \sum_{\lambda \in \mathcal{C}_{n+1}}  \int_{\bm{x} \in Q_n (X; \lambda )}  e^{- \mathrm{Len} (\gamma (\bm{x})) } \mathrm{d} \mu^{n+1}_{\mathrm{w}}  \\
	&= \int_{\bm{x} \in {X}^{n+1}}  e^{- \mathrm{Len} (\gamma (\bm{x})) } \mathrm{d} \mu^{n+1}_{\mathrm{w}}  - \sum_{\lambda \in \mathcal{C}_{n+1}} | \partial X |^{f(\lambda)} e^{-Lg(\lambda)}
\end{align*}
where in the second integral repeated variables occur for indices specified by $\lambda$. We thus get
\begin{equation*}
	\mathrm{Mag} (X , \mathsf{d} , \mu_{\mathrm{w}}; N) = \mu_{\mathrm{w}} (X) - \sum_{n=1}^{N} (-1)^{n} \sum_{\lambda \in \mathcal{C}_{n+1}} |\partial X |^{f(\lambda)}e^{-Lg(\lambda)} 
\end{equation*}
for any $N \in \mathbb{Z}_{>0}$. We recall $\mu_{\mathrm{w}} (X) = 1 + \frac{L}{2}$ \cite[Theorem 3.2.2, Proposition 3.2.1]{Lein13} and \cite[Theorem 2]{Willerton}. It is a non-trivial problem to determine whether this series converges as $N \to \infty$. If we allow ourselves to re-scale the measure by a constant $c>0$, we get
\begin{align*}
	\mathrm{Mag} (X , \mathsf{d} ,c \cdot \mu_{\mathrm{w}}) &= \frac{c}{1+c} \mu_{\mathrm{w}} (X) - \sum_{n=1}^{\infty} (-1)^{n}c^{n+1} \sum_{\lambda \in \mathcal{C}_{n+1}} |\partial X |^{f(\lambda)}e^{-Lg(\lambda)} \\
	&=\frac{c}{1+c} \left(1 + \frac{L}{2} \right) + \sum_{n=1}^{\infty} (-c)^{n+1} \sum_{\lambda \in \mathcal{C}_{n+1}} |\partial X |^{f(\lambda)} e^{-Lg(\lambda)}
\end{align*}
which is convergent for $0 < c < 1/4$, since $| \mathcal{C}_{n+1} | = 2^n -1$ and $f(\lambda) \le n$. Note that $g( \lambda )$ is counting the numbers when the indices flip from $a$ to $b$ or $b$ to $a$, which seems to be counting the number of geodesics between $a$ and $b$, just as we saw in \S \ref{sccbilm}.

\bibliography{magnitude.bib}

\begin{flushleft}
{\footnotesize
Department of Mathematics, Osaka Metropolitan University, \\
3-3-138, Sugimoto, Sumiyoshi-ku, Osaka, 558-8585, Japan.\\
Email: \texttt{yhashimoto@omu.ac.jp}}
\end{flushleft}\end{document}